\let\oldmarginpar\marginpar
\renewcommand{\marginpar}[2][rectangle,draw,text width= 2cm,rounded corners]{
    \oldmarginpar{
    \scriptsize \tikz \node at (0,0) [#1]{#2};}
    }
\def\mvint_#1{\mathchoice
          {\mathop{\vrule width 6pt height 3 pt depth -2.5pt
                  \kern -9pt \intop}\limits_{\kern -3pt #1}}%
          {\mathop{\vrule width 5pt height 3 pt depth -2.6pt
                  \kern -6pt \intop}\nolimits_{#1}}%
          {\mathop{\vrule width 5pt height 3 pt depth -2.6pt
                  \kern -6pt \intop}\nolimits_{#1}}%
          {\mathop{\vrule width 5pt height 3 pt depth -2.6pt
                  \kern -6pt \intop}\nolimits_{#1}}}
\newcommand{\Span}{\operatorname{span}}
\newcommand{\bbbr}{\mathbb R}
\newcommand{\bbbn}{\mathbb N}
\newcommand{\bbbz}{\mathbb Z}
\newcommand{\bbbq}{\mathbb Q}
\newcommand{\eps}{\varepsilon}
\def\diam{\operatorname{diam}}
\newtheorem{theorem}{Theorem}[section]
\newtheorem*{theorem*}{Theorem}
\newtheorem{lemma}[theorem]{Lemma}
\newtheorem{corollary}[theorem]{Corollary}
\newtheorem{proposition}[theorem]{Proposition}
\theoremstyle{definition}
\newtheorem{remark}[theorem]{Remark}
\newtheorem*{remark*}{Remark}
\newcommand{\supp}{{\rm supp}\,}
\newcommand{\wx}{\tilde{x}}
\newcommand{\wy}{\tilde{y}}
\newcommand*{\loc}{{\mathrm{loc}}}
\newcommand*{\PI}{{\mathrm{PI}}}
\renewcommand{\tocsection}[3]{%
  \indentlabel{\@ifnotempty{#2}{\bfseries\ignorespaces#1 #2\quad}}\bfseries#3}
\renewcommand{\tocsubsection}[3]{%
  \indentlabel{\@ifnotempty{#2}{\ignorespaces#1 #2\quad}}#3}
\newcommand\@dotsep{4.5}
\def\@tocline#1#2#3#4#5#6#7{\relax
  \ifnum #1>\c@tocdepth 
  \else
    \par \addpenalty\@secpenalty\addvspace{#2}%
    \begingroup \hyphenpenalty\@M
    \@ifempty{#4}{%
      \@tempdima\csname r@tocindent\number#1\endcsname\relax
    }{%
      \@tempdima#4\relax
    }%
    \parindent\z@ \leftskip#3\relax \advance\leftskip\@tempdima\relax
    \rightskip\@pnumwidth plus1em \parfillskip-\@pnumwidth
    #5\leavevmode\hskip-\@tempdima{#6}\nobreak
    \leaders\hbox{$\m@th\mkern \@dotsep mu\hbox{.}\mkern \@dotsep mu$}\hfill
    \nobreak
    \hbox to\@pnumwidth{\@tocpagenum{\ifnum#1=1\bfseries\fi#7}}\par
    \nobreak
    \endgroup
  \fi}
\renewcommand\csname r@tocindent0\endcsname{0pt}
\def\l@subsection{\@tocline{2}{-5pt}{2.5pc}{5pc}{}}
\title[Reflexivity and separability of $N^{1,p}$]{A simple proof of reflexivity and separability of $N^{1,p}$ Sobolev  spaces}
\author[Alvarado]{Ryan Alvarado}
\address{Ryan Alvarado,\newline \indent Department of Mathematics and Statistics, Amherst College, 
\newline \indent 405 Seeley Mudd, Amherst,
Massachusetts 01002}
\email{rjalvarado\@@amherst.edu}
\author[Haj\l{}asz]{Piotr Haj\l{}asz}
\address{Piotr Haj\l{}asz,\newline \indent Department of Mathematics, University of Pittsburgh, \newline \indent 301 Thackeray Hall, Pittsburgh,
Pennsylvania 15260}
\email{hajlasz@pitt.edu}
\thanks{P.H. was supported by NSF grant  DMS-2055171.}
\author[Mal\'{y}]{Luk\'{a}\v{s} Mal\'{y}}
\address{Luk\'a\v{s} Mal\'y,\newline \indent Department of Science and Technology, Link\"oping University,
\newline \indent
 SE-601 74 Norrk\"oping, Sweden}
\email{lukas.maly\@@liu.se}
\keywords{Sobolev spaces, analysis on metric spaces, Poincar\'e inequality, uniform convexity}
\subjclass[2020]{Primary 46E36, 30L99; Secondary 31E05, 43A85}
\begin{document}

\maketitle
\begin{abstract}
We present an elementary proof of a well-known theorem of Cheeger which states that if a metric-measure space $X$ supports a $p$-Poincar\'e inequality, then the $N^{1,p}(X)$ Sobolev space is reflexive and separable whenever $p\in (1,\infty)$. We also prove separability of the space when $p=1$.
Our proof is based on a straightforward construction of an equivalent norm on $N^{1,p}(X)$, $p\in [1,\infty)$, that is uniformly convex when $p\in (1,\infty)$. Finally, we explicitly construct a functional that is pointwise comparable to the minimal $p$-weak upper gradient, when $p\in (1,\infty)$.
\end{abstract}

\section{Introduction}
\label{intro}

Sobolev spaces on metric-measure spaces $M^{1,p}$ have been introduced in \cite{Haj2}, and soon after, many other definitions followed.
Independently, Cheeger \cite{Ch} and
Shanmugalingam \cite{Sha} introduced notions of Sobolev spaces on metric-measure spaces based on the upper gradient of Heinonen and Koskela \cite{HK}. 
Their spaces are denoted by $H_{1,p}$ and $N^{1,p}$, respectively.
While their definitions are different, it was observed by Shanmugalingam \cite[Theorem~4.10]{Sha}, that the spaces $H_{1,p}$ and $N^{1,p}$ are isometrically isomorphic when $p>1$.

Throughout the paper we assume that $(X,d,\mu)$ is a metric-measure space with a Borel regular doubling measure. In this setting,
we define $N^{1,p}(X)$, $p\in [1,\infty)$, as the space of functions $u\in L^p(X)$ that have an upper gradient in $L^p(X)$. $N^{1,p}(X)$ is a Banach space with respect to the norm
$$
\|u\|_{N^{1,p}(X)}:=\Big(\| u \|^p_{L^p(X)} + \inf_g \|g\|^p_{L^p(X)}\Big)^{1/p}.
$$
Here, the infimum is taken over all upper gradients $g$ of $u$. 
See Section~\ref{prelims} for additional details regarding our setting and the space $N^{1,p}(X)$.

If there are no rectifiable curves in $X$, then $g=0$ is an upper gradient of any function, and hence, $N^{1,p}(X)=L^p(X)$ isometrically. Therefore, in order to have a rich theory, we need a large family of rectifiable curves in $X$, which is guaranteed when the space supports a $p$-Poincar\'e inequality. Recall that the space $(X,d,\mu)$ supports a $p$-Poincar\'e inequality, $p\in [1,\infty)$, if the measure $\mu$ is doubling and there are constants 
$c_{\PI} > 0$ and $\lambda \geq 1$ such that
$$
\mvint_B |u-u_B|\,d\mu \le c_{\PI}\diam (B) \Biggl(\,\,\mvint_{\lambda B} g^p\,d\mu\Biggr)^{1/p}
$$
for all balls $B\subseteq X$, for all Borel functions $u\in L^1_\loc(X)$, and all upper gradients $g$ of $u$. Here, and in what follows, the barred integral stands for the integral average and $u_B:=\mvint_B u\, d\mu$ is the integral average of $u$ over the ball $B$. Also, $\diam (B)$ denotes the diameter of $B$, and $\lambda B$ stands for a ball concentric with $B$ and radius $\lambda$ times that of $B$.

Cheeger \cite{Ch} proved that if the space $(X,d,\mu)$ supports a $p$-Poincar\'e inequality for some $p\in (1,\infty)$, then the space $N^{1,p}(X)$ is reflexive. In fact, he proved in this setting that the space $N^{1,p}(X)$ can be equipped with an equivalent uniformly convex norm, from which reflexivity follows. 
His proof of reflexivity is, however, very difficult and based on the celebrated construction of a measurable differentiable structure. 
Later Keith \cite{Keith} proved the existence of a measurable differentiable structure and hence, reflexivity of $N^{1,p}(X)$, $p\in (1,\infty)$, under the so-called Lip-lip condition.
As demonstrated by Heinonen \cite[Section~12.5]{Hei07}, for general metric-measure spaces, $N^{1,p}(X)$, $p\in (1,\infty)$, need not be reflexive.

A different approach to reflexivity was provided by Ambrosio, Colombo, and Di Marino \cite{Ambrosio}. They proved reflexivity of $N^{1,p}(X)$, $p\in(1,\infty)$, under the assumptions that the metric space $X$ is metric-doubling, complete, and the measure $\mu$ is finite on balls. They did not, however, assume that the space supports a $p$-Poincar\'e inequality. 
In fact, they proved reflexivity of a Sobolev type space $W^{1,p}(X)$ whose definition is based on a notion of $p$-relaxed slope, and they proved that the space is equivalent to $N^{1,p}(X)$ under the given assumptions.
Their proof is actually quite difficult since it involves methods of mass-transportation, gradient flows, $\Gamma$-convergence, and Christ dyadic cubes, just to name a few. 
A simplification of this proof of reflexivity in the case when the space supports a $p$-Poincar\'e inequality was obtained by Durand-Cartagena and Shanmugalingam \cite{DurSha}; their proof follows arguments from \cite{Ambrosio} and, in particular, they use $\Gamma$-convergence and Christ dyadic cubes to construct an equivalent norm on $N^{1,p}(X)$ that is uniformly convex.

Recently, Eriksson-Bique and Soultanis \cite{ES}, proved reflexivity of $N^{1,p}(X)$, $p\in (1,\infty)$, under the assumption that the space has finite Hausdorff dimension. Their proof is quite difficult too.

The purpose of this paper is to provide a further simplification of the proof of reflexivity of $N^{1,p}(X)$ when $p\in (1,\infty)$ and the space supports a $p$-Poincar\'e inequality. In fact, we provide an explicit construction of an equivalent norm on $N^{1,p}(X)$, $p\in [1,\infty)$, which is uniformly convex when $p\in (1,\infty)$. Our arguments are based on ideas from \cite{Ambrosio} and also \cite{DurSha}, but our construction of the uniformly convex norm is direct and it does not require $\Gamma$-convergence nor Christ cubes.

A brief outline of our construction is below. All details can be found in Section~\ref{uc}.

For each $k\in\bbbz$, we select a covering of $X$ by balls $\{B_i^k\}_i$, of radii $2^{-k}$, such that the balls in the family $\big\{\frac{1}{5}B_i^k\big\}_i$ are pairwise disjoint. We say that balls $B_i^k$ and $B_j^k$ are neighbors if $\operatorname{dist}(B_i^k, B_j^k)<2^{-k}$, and we denote neighbors by $B_i^k\sim B_j^k$.
It follows from the doubling condition that the number of neighbors 
of a given ball $B_i^k$
is bounded by some constant $N\in\bbbn$ that is independent of $k$.

For each $x\in X$, there is a smallest index $i$ such that $x\in B_i^k$, and we write $B^k[x]:=B^k_i$. Then for $p\in [1,\infty)$ and $u\in L^1_{\rm loc}(X)$ we define
$$
|T_k u(x)|_p:=
2^k\Bigg(\sum_{j:B_j^k\sim B^k[x]} |u_{B^k[x]}-u_{B_j^k}|^p\Bigg)^{1/p},
$$
where the sum is taken over all neighbors of $B^k[x]=B_i^k$, and we set $|T_k u(x)|:=|T_k u(x)|_1$.
Finally, we equip $N^{1,p}(X)$, $p\in [1,\infty)$ with a new norm,
$$
\Vert u\Vert_{1,p}^* :=\Big(\Vert u\Vert_{L^p(X)}^p+\limsup_{k\to\infty} \big\Vert|T_{k} u|_p\big\Vert_{L^p(X)}^p\Big)^{1/p}.
$$
The main result of the paper reads as follows.
\begin{theorem}
\label{main}
Suppose that the space $(X,d,\mu)$ supports a $p$-Poincar\'e inequality for some $p\in [1,\infty)$. Then $\Vert\cdot\Vert_{1,p}^*$ is an equivalent norm on $N^{1,p}(X)$. 
Moreover, if $p\in(1,\infty)$, then the space $N^{1,p}(X)$ with the equivalent norm $\Vert\cdot\Vert_{1,p}^*$ is uniformly convex and hence, the space $N^{1,p}(X)$ is reflexive.
\end{theorem}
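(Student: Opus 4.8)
The plan is to establish three things: (i) the functional $\|\cdot\|_{1,p}^*$ is finite on $N^{1,p}(X)$ and comparable to $\|\cdot\|_{N^{1,p}(X)}$; (ii) it is genuinely a norm; and (iii) when $p\in(1,\infty)$ it is uniformly convex, from which reflexivity follows by the Milman--Pettis theorem once we know it is equivalent to the Banach norm.

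\textbf{Comparison of norms.} For the inequality $\|u\|_{1,p}^* \le C\|u\|_{N^{1,p}(X)}$, I would fix an upper gradient $g$ of $u$ and, for each $k$, estimate $\big\||T_k u|_p\big\|_{L^p(X)}^p$ from above by a sum over the balls $B_i^k$ of terms $\mu(B_i^k)\,|T_k u|_p(x)^p$ (for $x$ with $B^k[x]=B_i^k$), then control each difference $|u_{B_i^k} - u_{B_j^k}|$ between neighbors: since $B_i^k \sim B_j^k$ both are contained in a fixed dilate $C B_i^k$, so a standard telescoping/chaining argument together with the $p$-Poincar\'e inequality gives $|u_{B_i^k}-u_{B_j^k}|^p \lesssim 2^{-kp}\,\mvint_{C\lambda B_i^k} g^p\,d\mu$. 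Summing over the $j$ (at most $N$ neighbors) and over $i$, and using the bounded overlap of the dilated balls $\{CB_i^k\}_i$ coming from the doubling property, yields $\big\||T_k u|_p\big\|_{L^p(X)}^p \lesssim \int_X g^p\,d\mu$ with a constant independent of $k$; taking the $\limsup$ in $k$ and then the infimum over upper gradients $g$ gives the bound. For the reverse inequality $\|u\|_{N^{1,p}(X)} \le C\|u\|_{1,p}^*$, the task is to produce, from the discrete quantities $|T_k u|$, an honest upper gradient of $u$ in $L^p$ with controlled norm; here I would follow the approach of \cite{DurSha, Ambrosio}, using a partition-of-unity/discrete-convolution construction: build Lipschitz approximations $u_k$ of $u$ adapted to the scale-$2^{-k}$ covering whose upper gradients are pointwise controlled by (a maximal-type average of) $|T_k u|$, show $u_k \to u$ in $L^p$, and then use lower semicontinuity of the $N^{1,p}$ upper-gradient energy (Fuglede/Mazur) to extract a limiting upper gradient $g$ with $\|g\|_{L^p} \lesssim \liminf_k \||T_k u|\|_{L^p} \le \limsup_k \||T_k u|\|_{L^p}$. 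This is the step I expect to be the main obstacle, since it requires the Poincar\'e inequality in an essential way and the bookkeeping of the discrete-to-continuous passage is delicate.

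\textbf{Norm axioms.} Homogeneity is clear since $u\mapsto |T_k u(x)|_p$ is positively homogeneous and each piece of $\|\cdot\|_{1,p}^*$ is. For the triangle inequality, I would observe that for fixed $k$ and $x$, $u\mapsto |T_k u(x)|_p$ is a seminorm (it is an $\ell^p$-norm of a vector of linear functionals of $u$), hence $x\mapsto |T_k u(x)|_p$ satisfies the triangle inequality pointwise, so $\||T_k(u+v)|_p\|_{L^p} \le \||T_k u|_p\|_{L^p} + \||T_k v|_p\|_{L^p}$ by Minkowski's inequality in $L^p$; combining the $\limsup$ (subadditive) with Minkowski's inequality in $\R^2$ applied to the pair $(\|u\|_{L^p}, \limsup_k\||T_k u|_p\|_{L^p})$ gives subadditivity of $\|\cdot\|_{1,p}^*$. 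Positivity ($\|u\|_{1,p}^*=0 \Rightarrow u=0$) follows immediately from $\|u\|_{L^p}\le \|u\|_{1,p}^*$.

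\textbf{Uniform convexity and reflexivity.} Assume now $p\in(1,\infty)$. The key point is that $\|\cdot\|_{1,p}^*$ is built entirely out of $L^p$-type quantities. Concretely, for $u,v\in N^{1,p}(X)$ with $\|u\|_{1,p}^*=\|v\|_{1,p}^*=1$ and $\|u-v\|_{1,p}^*\ge\eps$, I would apply Clarkson's inequalities (uniform convexity of $L^p$) at three levels: to $\|u\|_{L^p}, \|v\|_{L^p}$; to $\||T_k u|_p\|_{L^p}, \||T_k v|_p\|_{L^p}$ for each $k$, using that pointwise $x\mapsto|T_k u(x)|_p$ lies in $L^p(X)$ and that $|T_k(\frac{u+v}{2})(x)|_p \le \frac12(|T_k u(x)|_p + |T_k v(x)|_p)$; and finally to the two-dimensional $\ell^p$ combination. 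A clean way to package this is to view $u \mapsto \big(u, (T_k u)_{k}\big)$ as an isometric embedding of $(N^{1,p}(X), \|\cdot\|_{1,p}^*)$ into a suitable $\ell^p$-type direct sum of $L^p$-spaces followed by a $\limsup$; since $\limsup$ is not a norm one cannot embed directly, but the three-level Clarkson estimate still goes through because at the decisive scales $k$ (those nearly achieving the $\limsup$ for $u-v$) both $u$ and $v$ have controlled $\||T_k\cdot|_p\|_{L^p}$, so one of the three levels must exhibit a definite gap. Quantifying this to produce $\delta(\eps)>0$ with $\|\frac{u+v}{2}\|_{1,p}^*\le 1-\delta(\eps)$ is the remaining technical point; it is routine modulo care with the $\limsup$. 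Once uniform convexity is established, the Milman--Pettis theorem gives reflexivity of $(N^{1,p}(X),\|\cdot\|_{1,p}^*)$, and since this norm is equivalent to the original Banach norm, $N^{1,p}(X)$ is reflexive.
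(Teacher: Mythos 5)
Your sketch has the right overall shape (upper bound via Poincar\'e and bounded overlap; uniform convexity via Clarkson/$L^p(X,\ell^p_M)$; Milman--Pettis), but there is a genuine gap in the uniform convexity step and the lower-bound step takes a heavier route than the paper and omits $p=1$.

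\textbf{The uniform convexity gap.} You propose to apply Clarkson's inequality ``at the decisive scales $k$ (those nearly achieving the $\limsup$ for $u-v$).'' That only produces a bound on $\Phi(f_k+g_k)$ along a \emph{subsequence} of $k$'s, i.e.\ it controls $\liminf_k\big\||T_k(u+v)|_p\big\|_{L^p}$, not the $\limsup$ that appears in the definition of $\Vert\cdot\Vert_{1,p}^*$. To bound $\Vert u+v\Vert_{1,p}^*$ you need the Clarkson estimate at \emph{all} sufficiently large $k$, and for that you must know that $\Vert u-v\Vert_{1,p}^*>\eps$ forces $\big\||T_k(u-v)|_p\big\|_{L^p}$ to be large for all large $k$, i.e.\ a $\liminf$ lower bound. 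You have not supplied an argument that the $\liminf$ and $\limsup$ of $\big\||T_ku|_p\big\|_{L^p}$ are comparable (with a constant independent of $u$), and without it the argument does not close. The paper derives exactly this comparability as a byproduct of the norm-equivalence proof (both $\liminf$ and $\limsup$ are trapped between $4^{-1}\|g_u\|_{L^p}$ and a multiple of $\|g_u\|_{L^p}$), and then uses it before invoking uniform convexity of $L^p(X,\ell^p_{N+1})$ at a \emph{single fixed large} $k$.

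\textbf{The lower bound in the norm comparison.} You handle it via a partition-of-unity/discretization that builds Lipschitz approximations $u_k$ and then passes to a limiting upper gradient. This is closer to the approach of Durand-Cartagena--Shanmugalingam and Ambrosio--Colombo--Di Marino, which the paper is explicitly trying to streamline away from. The paper instead works directly with the piecewise constant functions $S_ku=\sum_i u_{B_i}\chi_{A_i}$: the key elementary lemma is that $4|T_ku|_p$ acts as an upper gradient of $S_ku$ along any rectifiable curve whose endpoints are farther than $2^{-k}$ apart, and then Mazur + Fuglede turn a weak limit of $\{4|T_ku|_p\}$ into a genuine $p$-weak upper gradient of $u$. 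Your route is plausible in spirit, but it re-imports machinery the paper's construction was designed to avoid, and it is not accompanied by the $p=1$ weak-compactness argument: for $p>1$ one has reflexivity of $L^p$, but for $p=1$ (where the norm equivalence is still claimed) you have to supply a Dunford--Pettis-type equi-integrability argument for $\{|T_ku|_1\}_k$, which your proposal does not address.

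\textbf{What is fine.} The upper bound $\big\||T_ku|_p\big\|_{L^p}\lesssim\|g\|_{L^p}$ via the Poincar\'e inequality, bounded overlap of the dilated balls, and taking infimum over upper gradients is essentially identical to the paper's Lemma~\ref{TkBDD}. The verification of the norm axioms is correct. The packaging of the uniform convexity into $L^p(X,\ell^p_{N+1})$ at a fixed scale is also the right idea; you just need the $\liminf\approx\limsup$ ingredient to make the scale choice uniform in $k$.
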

\noindent The notion of uniform convexity is recalled in Section~\ref{Clar}.

The construction of the norm $\Vert\cdot\Vert_{1,p}^*$ is different from, but related to, the constructions given in \cite{Ambrosio,DurSha}. Recall that their constructions were less direct, as they required $\Gamma$-convergence and Christ cubes. The equivalence of the norms when $p=1$ is, however, new.
As a corollary, we also prove
\begin{theorem}
\label{apes}
Suppose that the space $(X,d,\mu)$ supports a $p$-Poincar\'e inequality for some $p\in [1,\infty)$. Then the space $N^{1,p}(X)$ is separable.
\end{theorem}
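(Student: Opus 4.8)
The strategy is to transfer separability from $L^p(X)$ to $N^{1,p}(X)$ via the equivalent norm $\Vert\cdot\Vert_{1,p}^*$ furnished by Theorem~\ref{main}. Since the metric-measure space $(X,d,\mu)$ carries a doubling measure, it is separable as a metric space, and $L^p(X)$ is separable for $p\in[1,\infty)$; this is the only ``external'' fact I would invoke. The key observation is that the second term in $\Vert\cdot\Vert_{1,p}^*$, namely $\limsup_{k\to\infty}\bigl\Vert|T_k u|_p\bigr\Vert_{L^p(X)}$, is controlled by the $L^p$-norm of an upper gradient of $u$ (this comparison is exactly what makes $\Vert\cdot\Vert_{1,p}^*$ equivalent to $\Vert\cdot\Vert_{N^{1,p}(X)}$, and is established in Section~\ref{uc}). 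In particular, each operator $u\mapsto|T_k u|_p$ depends on $u$ only through the ball averages $\{u_{B_i^k}\}_i$, so it is a bounded linear-type (in fact sublinear, positively homogeneous, subadditive) map, and the whole norm $\Vert\cdot\Vert_{1,p}^*$ is continuous with respect to the $N^{1,p}$-norm.

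First I would recall the general principle: a normed space is separable if and only if it is separable in some (equivalently, any) equivalent norm, since a countable dense set in one norm is countable dense in the other. Thus it suffices to prove that $(N^{1,p}(X),\Vert\cdot\Vert_{1,p}^*)$ is separable, or even just that $(N^{1,p}(X),\Vert\cdot\Vert_{N^{1,p}(X)})$ is separable by any convenient argument. For $p\in(1,\infty)$ one route is immediate: by Theorem~\ref{main} the space is reflexive, and a reflexive Banach space is separable provided its dual is separable — but to avoid the dual I prefer the uniform-convexity-free argument that also covers $p=1$.

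The cleanest uniform argument, valid for all $p\in[1,\infty)$, is the following. Consider the map $J\colon N^{1,p}(X)\to L^p(X)\times L^p(X)$ defined by $Ju=(u,g_u)$, where $g_u$ is the minimal $p$-weak upper gradient of $u$ (or, for $p=1$, any chosen upper gradient realizing the infimum up to a factor $2$); by definition of the $N^{1,p}$-norm, $J$ is an isometric (up to equivalence of norms) embedding of $N^{1,p}(X)$ onto a subset of the separable space $L^p(X)\times L^p(X)$. A subset of a separable metric space is separable. Hence $N^{1,p}(X)$ is separable. The only subtlety is that $J$ need not be linear (the assignment $u\mapsto g_u$ is sublinear, not linear), so one cannot immediately say ``a subspace of a separable space'' — but separability of subsets of separable metric spaces does not require linear structure, so the argument goes through verbatim once one checks that $J$ is a homeomorphism onto its image, which follows directly from the two-sided bound $\max\{\Vert u\Vert_{L^p},\Vert g_u\Vert_{L^p}\}\le\Vert u\Vert_{N^{1,p}(X)}\le 2^{1/p}(\Vert u\Vert_{L^p}+\Vert g_u\Vert_{L^p})$ and the fact that $g_{u-v}$ dominates $|g_u-g_v|$ up to constants, giving $\Vert g_u-g_v\Vert_{L^p}\lesssim\Vert u-v\Vert_{N^{1,p}(X)}$.

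The main (and only genuine) obstacle is this last continuity statement for the gradient assignment: one must know that $\Vert u_n-u\Vert_{N^{1,p}(X)}\to 0$ forces $\Vert g_{u_n}-g_u\Vert_{L^p(X)}\to 0$, where $g_v$ denotes the minimal $p$-weak upper gradient. This is a standard fact about Newtonian spaces — it follows from the locality and sub/superadditivity properties of minimal weak upper gradients, namely $g_{u_n-u}=g_{u_n-u}$ is an upper gradient (in the weak sense) of $u_n-u$ with $\Vert g_{u_n-u}\Vert_{L^p}\le\Vert u_n-u\Vert_{N^{1,p}(X)}\to 0$, combined with $|g_{u_n}-g_u|\le g_{u_n-u}$ $\mu$-a.e. — and I would simply cite it from the preliminaries in Section~\ref{prelims} (or from \cite{Sha,HK}). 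With that in hand the proof is two lines: $N^{1,p}(X)$ embeds homeomorphically into the separable space $L^p(X)^2$, hence is separable. Alternatively, and perhaps more in the spirit of the paper, one can avoid even citing that fact: use Theorem~\ref{main} to get the equivalent norm $\Vert\cdot\Vert_{1,p}^*$, pick a countable dense set $D$ in the separable space $L^p(X)$, and observe that the $N^{1,p}$-closure of $D\cap N^{1,p}(X)$ — or of a countable family of Lipschitz functions with bounded support, which are known to have upper gradients in $L^p$ under a Poincaré inequality and to be dense in $N^{1,p}(X)$ — is all of $N^{1,p}(X)$; the density of such Lipschitz functions is itself a consequence of the Poincaré inequality via the discrete convolutions $T_k$ used to build the norm, so this keeps the argument self-contained modulo Section~\ref{uc}.
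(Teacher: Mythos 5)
Your central step — that the map $J\colon N^{1,p}(X)\to L^p(X)\times L^p(X)$, $Ju=(u,g_u)$, is a homeomorphism onto its image — is not established by the estimates you quote, and this is a genuine gap. The two-sided bound applied to $u-v$ reads $\|u-v\|_{N^{1,p}(X)}\le 2^{1/p}\bigl(\|u-v\|_{L^p(X)}+\|g_{u-v}\|_{L^p(X)}\bigr)$, and to make $J^{-1}$ continuous on its image you would need to dominate $\|g_{u-v}\|_{L^p(X)}$ by $\|g_u-g_v\|_{L^p(X)}$. But the pointwise inequality you invoke, $|g_u-g_v|\le g_{u-v}$, runs in exactly the wrong direction: it gives continuity of $u\mapsto g_u$ (so $J$ is Lipschitz and injective), but says nothing about whether $g_{u_n}\to g_u$ and $u_n\to u$ in $L^p(X)$ together force $g_{u_n-u}\to 0$. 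A reverse inequality $g_{u-v}\lesssim |g_u-g_v|$ is simply false pointwise, and there is no ``standard fact'' in the literature asserting that $J^{-1}$ is continuous. Without it, ``$J(N^{1,p}(X))$ is a subset of a separable space, hence separable'' does not transfer back to $N^{1,p}(X)$: one would also need that preimages under $J$ of sets dense in the image are dense in $N^{1,p}(X)$, which is precisely the missing continuity of $J^{-1}$. Your alternative sketch at the end (take $D$ dense in $L^p(X)$ and consider $D\cap N^{1,p}(X)$, or ``a countable family of Lipschitz functions with bounded support'') is too vague to fill the hole: $L^p$-density of $D$ gives no $N^{1,p}$-density, and you do not exhibit a countable $N^{1,p}$-dense family of Lipschitz functions.

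The paper avoids this problem via a genuinely different mechanism (Proposition~\ref{reflextosep}): if $T\colon V\to W$ is \emph{linear}, bounded, and injective with $V$ reflexive and $W$ separable, then $V$ is separable. Taking $T=\iota\colon N^{1,p}(X)\hookrightarrow L^p(X)$ (linear, unlike your $J$), one uses reflexivity of $N^{1,p}(X)$ from Theorem~\ref{main} plus Mazur's lemma: starting from $T(v_{k_i})\to T(v)$ and a weakly convergent subsequence $v_{k_i}\rightharpoonup\tilde v$, convex combinations converge in norm to $\tilde v$, and linearity plus injectivity of $T$ force $\tilde v=v$. This is precisely the replacement for the missing inverse continuity -- reflexivity and Mazur do the work that no pointwise estimate on gradients can. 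Note also that this route, unlike yours, makes essential use of $p>1$; the paper handles $p=1$ by a separate truncation argument ($\eta_k\psi_i$ with $\{\psi_i\}$ dense in $N^{1,q}(X)$ for some $q>1$ and $\eta_k$ Lipschitz cutoffs), which you would still need even if your embedding argument were repaired.
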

It is well known that separability can be deduced from reflexivity when $p\in(1,\infty)$, see \cite{Ch}, but separability in the case $p=1$ seems to be new.

It follows from the proof of Theorem~\ref{main} (more specifically, Proposition~\ref{gradcomparable}) that if $p\in [1,\infty)$, then there is $C\geq 1$ such that
$$
C^{-1}\Vert g_u\Vert_{L^p(X)}\leq \limsup_{k\to\infty} \big\Vert|T_{k} u|_p\big\Vert_{L^p(X)}\leq C\Vert g_u\Vert_{L^p(X)},
$$
where $g_u$ is the minimal $p$-weak upper gradient of $u$. The next result shows not only a comparison of norms, but a pointwise comparison under the additional assumptions that $X$ is complete and $p>1$.

\begin{theorem}
\label{main2}
Suppose that the space $(X,d,\mu)$ is complete and supports a $p$-Poincar\'e inequality for some $p\in(1,\infty)$. Then there exists a constant  $C\geq 1$ such that for every $u\in N^{1,p}(X)$,
$$
C^{-1} g_u(x)\leq \limsup_{k\to\infty} |T_{k} u(x)|\leq C g_u(x)\quad
\text{for $\mu$-a.e. $x\in X$,}
$$
where $g_u\in L^p(X)$ denotes the minimal $p$-weak upper gradient of $u$.
\end{theorem}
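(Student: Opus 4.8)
The strategy is to bootstrap from the already-established norm-comparison (Proposition~\ref{gradcomparable}), which gives $\limsup_{k\to\infty}\big\||T_ku|_p\big\|_{L^p}\approx \|g_u\|_{L^p}$, to the pointwise statement by applying that comparison on small balls and then localizing. First I would record the easy inequality. The upper gradient inequality and the definition of $|T_ku(x)|$ as a sum of normalized differences $2^k|u_{B^k[x]}-u_{B_j^k}|$ over neighbors should give $|T_ku(x)|\le C\,\mvint_{CB^k[x]} g_u\,d\mu$ (via the Poincaré inequality applied on the relevant balls, together with the bounded overlap/bounded-number-of-neighbors property), so that $\limsup_{k\to\infty}|T_ku(x)|\le C\,g_u^*(x)$ where $g_u^*$ is the (non-centered) Hardy--Littlewood maximal function. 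By Lebesgue differentiation, at a.e.\ $x$ the averages of $g_u$ over the shrinking balls $B^k[x]$ converge to $g_u(x)$; the difficulty is that $|T_ku(x)|$ involves an average over a slightly \emph{dilated} ball $CB^k[x]$, not $B^k[x]$ itself. This is handled by the standard trick: it suffices to prove the pointwise upper bound at Lebesgue points of $g_u$ \emph{and} to control the maximal-function-type overshoot, which ultimately only costs a constant $C$ (not convergence to $g_u(x)$ exactly), and that is all Theorem~\ref{main2} asks for on the upper side.

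For the lower bound $g_u(x)\le C\limsup_k|T_ku(x)|$, the key is to exploit that $g_u$ is the \emph{minimal} $p$-weak upper gradient and is therefore characterized by a local/asymptotic property. Set $h(x):=\limsup_{k\to\infty}|T_ku(x)|$; I would show that $Ch$ is (comparable to) a $p$-weak upper gradient of $u$, so that $g_u\le Ch$ a.e.\ by minimality. To produce an upper gradient from the discrete data, fix a ball $B=B(x_0,r)$ and, for large $k$, chain along a sequence of neighboring balls $B^k[x_0]\sim B^k_{i_1}\sim\dots\sim B^k_{i_m}=B^k[y]$ following (a discretization of) a rectifiable curve from $x_0$ to $y$; the telescoping sum of $|u_{B^k_{i_\ell}}-u_{B^k_{i_{\ell+1}}}|$ is controlled by $2^{-k}\sum_\ell |T_ku(z_\ell)|$ for sample points $z_\ell$ along the curve, which is a Riemann sum for $\int_\gamma h\,ds$. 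Passing to the limit $k\to\infty$ and using that $u_{B^k[y]}\to u(y)$ for a.e.\ $y$ (Lebesgue points) yields $|u(x)-u(y)|\le C\int_\gamma h\,ds$ along curves, i.e.\ $Ch$ is an upper gradient of (a good representative of) $u$ — hence a $p$-weak upper gradient, giving $g_u\le Ch$.

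The main obstacle is this lower-bound argument: making the "chaining along a curve" rigorous uniformly in $k$. One must ensure that a given rectifiable curve $\gamma$ can be approximated, at scale $2^{-k}$, by a chain of pairwise-neighboring balls whose total number is $O(2^k\,\ell(\gamma))$ and that the associated telescoping sum is genuinely bounded by a Riemann sum of $h$ along $\gamma$ with a $k$-independent constant; the bounded-number-of-neighbors bound $N$ and the doubling property are what keep the constants uniform, but the curve-chaining bookkeeping (and the fact that $\limsup$, not $\lim$, is all that survives) needs care. Completeness of $X$ enters to guarantee an abundance of rectifiable curves (via the Poincaré inequality, completeness yields quasiconvexity), which is what makes the curve-chaining available in the first place, and $p>1$ is used exactly as in Proposition~\ref{gradcomparable}. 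Once both inequalities are in hand, Theorem~\ref{main2} follows immediately, and the $L^p$-comparison displayed before the theorem is recovered by integration.
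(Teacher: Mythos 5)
Your overall architecture is right: the upper bound follows from the pointwise estimate $|T_ku(x)|\lesssim\bigl(\mvint_{5\lambda B^k[x]}g_u^p\,d\mu\bigr)^{1/p}$ plus Lebesgue differentiation (and since $x\in 5\lambda B^k[x]$ with $\diam\to 0$, there is no "dilated‑ball overshoot" to worry about — the Lebesgue point argument applies directly); and the lower bound should indeed come from showing that a constant multiple of $h:=\limsup_k|T_ku|$ is a $p$-weak upper gradient of $u$. The chaining mechanism you describe is essentially what Lemma~\ref{almostug} already furnishes: $|S_ku(x)-S_ku(y)|\le 4\int_\gamma|T_ku|_p\,ds$ whenever $d(x,y)\ge 2^{-k}$. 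But there are two genuine gaps in how you propose to close the argument.

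First, the passage "passing to the limit $k\to\infty$ \ldots\ yields $|u(x)-u(y)|\le C\int_\gamma h\,ds$" cannot be carried out by a Riemann-sum heuristic: you only know $\limsup_k|T_ku|=h$ pointwise, not convergence, and pointwise control does not give $\int_\gamma|T_ku|\,ds\to\int_\gamma h\,ds$ along a fixed curve, let alone $\operatorname{Mod}_p$-a.e.\ curve. What is needed is an $L^p$-convergence statement that can be upgraded to curve-wise convergence via Fuglede's lemma (or, as packaged in Lemma~\ref{pro:T_ku-converges-balls}, via Mazur's lemma and Fuglede). In the paper this is done by setting $h_k:=4\sup_{j\ge k}|T_ju|_p$, so that $h_k\downarrow 4\limsup_k|T_ku|_p$ pointwise, and then invoking dominated convergence in $L^p$. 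The dominating function is $(\mathscr M g_u^q)^{1/q}$, obtained from the $q$-Poincaré inequality, and it lies in $L^p$ precisely because $p/q>1$ and the maximal operator is bounded on $L^{p/q}$; this is the crux of the proof and is absent from your plan.

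Second, and related, you have misidentified the role of completeness. It is not used to obtain quasiconvexity or an abundance of curves; it is used through the Keith--Zhong self-improvement theorem, which upgrades the $p$-Poincaré inequality to a $q$-Poincaré inequality with $q<p$. Without that strict gap $q<p$, the maximal-function domination $|T_ku|_p\lesssim(\mathscr M g_u^q)^{1/q}\in L^p$ fails (the Hardy--Littlewood maximal operator is not bounded on $L^1$), and the dominated-convergence step collapses. So both the $q<p$ improvement and the $L^p$-domination/Fuglede mechanism are essential ingredients that your proposal leaves out; without them the lower bound does not go through.
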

\begin{remark}
Note that we could replace $|T_ku(x)|$ in Theorem~\ref{main2} by $|T_ku(x)|_p$, because the number of neighbors is bounded by $N$ and all norms in $\bbbr^N$ are equivalent. However, in Theorem~\ref{main} we have to work with $|T_ku|_p$ in order to guarantee uniform convexity of the norm.
\end{remark}

The paper is structured as follows. In Section~\ref{prelims} we fix notation used in the paper, recall basic definitions, and state known results that will be used in the subsequent sections. In Section~\ref{uc} we carefully explain the statement of the main result, Theorem ~\ref{main}, and we prove it. In Section~\ref{sepa} we prove Theorem~\ref{apes} and finally, in Section~\ref{ptwise} we prove Theorem~\ref{main2}.

\subsection*{Acknowledgements}
We would like to thank Nicola Gigli from whom we learned Proposition~\ref{reflextosep}, and Giorgio Metafune for helpful comments.

\section{Preliminaries}
\label{prelims}

\subsection{Notational conventions}
Let $\bbbz$ denote all integers and $\bbbn$ all (strictly) positive integers. By $C$ we denote a generic constant whose actual value may change from line to line.
For nonnegative quantities, $L,R\geq 0$, the notation $L \lesssim R$ will be used to express that there exists a constant $C>0$, perhaps dependent on other constants within the context, such that $ L \le CR$. If $L \lesssim R$ and simultaneously $R \lesssim L$, then we will simply write $L \approx R$ and say that the quantities $L$ and $R$ are \emph{equivalent} (or \emph{comparable}).

The characteristic function of a set $E$ will be denoted by $\chi_E$.

We assume that all function spaces are linear spaces over the field of real numbers.

We use a convention that the names ``Theorem'' and ``Proposition'' are reserved for new results, while well-known results and results of technical character are called ``Lemma'' or ``Corollary''. 

\subsection{Metric-measure spaces}
A metric-measure space is a triplet $(X,d,\mu)$ where $(X,d)$ is a metric space and  $\mu$ is a Borel measure such that $0<\mu(B)<\infty$ for every ball $B\subseteq X$. We will assume that $\mu$ is \textit{Borel regular}, in the sense that every $\mu$-measurable set is contained in a Borel set of equal measure. We will also assume that $\mu$ is \emph{doubling}, i.e., there is a constant $C_d\geq 1$ such that $\mu(2B) \le C_d \mu(B)$ for every ball $B \subseteq X$. 

We will need the following version of the Lebesgue differentiation theorem.
\begin{lemma}
\label{T5}
Assume that $\mu$ is a Borel regular doubling measure on $X$ and $u\in L^1_{\rm loc}(X)$. Then for $\mu$-a.e. $x\in X$ the following is true. If $\{B_i\}_i$ is a sequence of balls such that $x\in B_i$ for all $i$ and $\diam(B_i)\to 0$ as $i\to\infty$, then
\begin{equation}
\label{eq3}
\lim_{i\to\infty}\, \mvint_{B_i} u\, d\mu=u(x).
\end{equation}
\end{lemma}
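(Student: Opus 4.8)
The plan is to reduce the statement, in two standard steps, to the classical Lebesgue differentiation theorem for \emph{centered} balls, and then to prove that via the Hardy--Littlewood maximal function.

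Call $x\in X$ a \emph{Lebesgue point} of $u$ if $\lim_{r\to0^+}\mvint_{B(x,r)}|u-u(x)|\,d\mu=0$. The one observation specific to the present formulation is that \eqref{eq3} holds at every Lebesgue point. Indeed, let $x$ be a Lebesgue point and let $\{B_i\}_i$ be balls with $x\in B_i$ and $\diam(B_i)\to0$; we may assume each $\diam(B_i)>0$ (if $\diam(B_i)=0$ then $B_i=\{x\}$ and the average over $B_i$ equals $u(x)$). Writing $B_i=B(y_i,r_i)$, we have $r_i\ge\tfrac12\diam(B_i)$ and $d(x,y_i)<r_i$, so
\[
B_i\subseteq B\bigl(x,2\diam(B_i)\bigr)\subseteq B(y_i,8r_i),
\]
and hence $\mu\bigl(B(x,2\diam(B_i))\bigr)\le C_d^{3}\,\mu(B_i)$ by the doubling property. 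Consequently
\[
\Bigl|\,\mvint_{B_i}u\,d\mu-u(x)\Bigr|\le\frac1{\mu(B_i)}\int_{B(x,2\diam(B_i))}|u-u(x)|\,d\mu\le C_d^{3}\,\mvint_{B(x,2\diam(B_i))}|u-u(x)|\,d\mu\longrightarrow 0
\]
as $i\to\infty$, which is \eqref{eq3}. Thus it remains only to exhibit a full-measure set of Lebesgue points.

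To this end, first reduce to $u\in L^1(X)$: a doubling measure forces $X$ to be separable, so $X=\bigcup_j B_j$ for countably many balls $B_j$; since $\mu$ is finite on balls and $u\in L^1_\loc(X)$, each $u\chi_{2B_j}$ belongs to $L^1(X)$, and any Lebesgue point of $u\chi_{2B_j}$ lying in $B_j$ is also a Lebesgue point of $u$ (once $r$ is small, $B(x,r)\subseteq 2B_j$, where the two functions agree). Taking a union over $j$, it suffices to prove the centered theorem for $u\in L^1(X)$. For such $u$, introduce the centered maximal function $Mu(x):=\sup_{r>0}\mvint_{B(x,r)}|u|\,d\mu$; the $5r$-covering lemma together with the doubling condition gives the weak-type bound $\mu(\{Mu>\lambda\})\le C\lambda^{-1}\|u\|_{L^1(X)}$ for all $\lambda>0$. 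Given $\lambda,\eps>0$, pick a Lipschitz function $v$ with bounded support and $\|u-v\|_{L^1(X)}<\eps$; such functions are dense in $L^1(X)$, because finite Borel measures on a metric space are inner regular with respect to closed sets, so characteristic functions of sets of finite measure, and hence $L^1$ functions, can be approximated by Lipschitz ones. Writing $w=u-v$ and using the continuity of $v$, one gets $\limsup_{r\to0^+}\mvint_{B(x,r)}|u-u(x)|\,d\mu\le Mw(x)+|w(x)|$ for every $x$; thus the set where this $\limsup$ exceeds $\lambda$ lies in $\{Mw>\lambda/2\}\cup\{|w|>\lambda/2\}$, whose measure is at most $(2C+2)\eps/\lambda$ by the weak-type bound and Chebyshev's inequality. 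Letting $\eps\to0$ and then intersecting over $\lambda=1/n$, $n\in\bbbn$, shows that $\mu$-a.e.\ point of $X$ is a Lebesgue point of $u$, which finishes the proof.

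I do not anticipate a serious obstacle. The second reduction and the maximal-function argument are entirely routine; if full self-containedness is desired, the most technical ingredients are the $5r$-covering lemma in a general (doubling, hence separable) metric space and the density of compactly supported Lipschitz functions in $L^1(X)$—the latter resting on inner regularity of finite Borel measures on metric spaces—both of which are classical and could instead be invoked by citation. The only step genuinely particular to this statement is the geometric comparison used above, which is exactly what allows the balls $B_i$ to be arbitrary balls containing $x$ rather than balls centered at $x$; I view it as the one place to be careful, though it is elementary.
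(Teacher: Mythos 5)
Your proof is correct and takes exactly the route the paper alludes to: the paper merely remarks that \eqref{eq3} holds at every Lebesgue point and that the existence of a full-measure set of Lebesgue points is a standard generalization of the Euclidean proof, without supplying details. You fill in those details completely, and both halves are sound. The geometric comparison $B_i\subseteq B(x,2\diam(B_i))\subseteq B(y_i,8r_i)$ together with three applications of doubling is the right way to pass from balls merely containing $x$ to centered balls; and the second half (reduction to $u\in L^1(X)$ via separability, weak-$(1,1)$ maximal bound from the $5r$-covering lemma, density of Lipschitz functions) is the standard maximal-function proof of the differentiation theorem, correctly transplanted to the doubling metric-measure setting.

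One small remark: in the inclusion chain you can already stop at $B(y_i,5r_i)$ (since $2\diam(B_i)+r_i\le 5r_i$); using $8r_i$ only tidies the doubling count to $C_d^3$, which is of course fine.
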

Equality \eqref{eq3} is satisfied whenever $x$ is a Lebesgue point of $u$.
This result is well known if $\mu$ is the Lebesgue measure in $\bbbr^n$, but the standard proofs easily generalize to the case of metric-measure spaces equipped with a Borel regular doubling measure.

\subsection{Integrating along curves in metric spaces and modulus of the path family}

By a \emph{curve} in $X$, we mean a continuous mapping $\gamma\colon[a,b]\to X$. Given a curve $\gamma$, the \textit{image of} $\gamma$ is denoted by $|\gamma|:=\gamma([a,b])$ and $\ell(\gamma)$ stands for the \textit{length of} $\gamma$. We will say that $\gamma$ is \textit{rectifiable} if $\ell(\gamma)<\infty$ and the family of all non-constant rectifiable curves in $X$ will be denoted by $\Gamma(X)$. Every $\gamma\in\Gamma(X)$ admits a unique (orientation preserving) \textit{arc-length parameterization} $\widetilde{\gamma}\colon[0,\ell(\gamma)]\to X$, and the arc-length parameterization is 1-Lipschitz; see, e.g., \cite[Theorem~3.2]{Haj}. Given a curve $\gamma\in\Gamma(X)$ and a Borel measurable function $\varrho\colon|\gamma|\to[0,\infty]$, we define
$$
\int_\gamma\varrho\,ds:=\int_0^{\ell(\gamma)}\varrho(\widetilde{\gamma}(t))\,dt.
$$
We can naturally define the integral over a curve for a general function by considering the positive and negative parts of the function.

Let $\Gamma\subseteq\Gamma(X)$ and consider the collection $F(\Gamma)$ of all Borel  functions $\varrho\colon X\to[0,\infty]$ satisfying
$$
\int_\gamma\varrho\,ds\geq1\quad\mbox{for all $\gamma\in\Gamma$.}
$$
Then, for each $p\in[1,\infty)$, the $p$-\textit{modulus of the family} $\Gamma$ is defined as
$$
{\rm Mod}_p(\Gamma):=\inf_{\varrho\in F(\Gamma)}\int_X\varrho^p\,d\mu.
$$
Note that ${\rm Mod}_p$ is an outer-measure on $\Gamma(X)$ and, in particular, it is countably subadditive; see, e.g., \cite[Theorem~5.2]{Haj}. A family of curves $\Gamma\subseteq\Gamma(X)$ is called \emph{$p$-exceptional} if ${\rm Mod}_p(\Gamma)=0$ and a statement is said to hold for \emph{${\rm Mod}_p$-a.e.} curve $\gamma\in\Gamma(X)$ if the family of curves in $\Gamma(X)$ for which this statement does not hold is $p$-exceptional.

For the next result, see \cite[Proposition~2.45]{BjoBjo}.
It follows from H\"older's inequality and \cite[Proposition~1.37(c)]{BjoBjo}.
\begin{lemma}
\label{except}
If a family of curves is $p$-exceptional for some $p\in(1,\infty)$, then it is
$q$-exceptional for every $q\in[1,p]$.
\end{lemma}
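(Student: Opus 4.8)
The plan is to reduce the statement to the case of curves whose images all lie in a single fixed ball, where the measure is finite, and then invoke H\"older's inequality. The one genuine subtlety is that when $\mu(X)=\infty$ a function $\varrho$ with $\int_X\varrho^p\,d\mu$ small need not have $\int_X\varrho^q\,d\mu$ small, so one cannot simply estimate a global competitor directly; the localization to a ball is exactly what removes this difficulty.

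Concretely, I would first fix a point $x_0\in X$ and, for $n\in\bbbn$, set $\Gamma_n:=\{\gamma\in\Gamma:|\gamma|\subseteq B(x_0,n)\}$. Since every $\gamma\in\Gamma(X)$ is a continuous map on a compact interval, its image $|\gamma|$ is compact and hence bounded, so $\Gamma=\bigcup_{n\in\bbbn}\Gamma_n$. Because ${\rm Mod}_p$ is an outer measure, monotonicity gives ${\rm Mod}_p(\Gamma_n)\le{\rm Mod}_p(\Gamma)=0$ for each $n$, so each $\Gamma_n$ is $p$-exceptional; and by countable subadditivity of ${\rm Mod}_q$ it suffices to prove that every $\Gamma_n$ is $q$-exceptional.

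Fixing $n$ and writing $B:=B(x_0,n)$, recall that $\mu(B)<\infty$. Given $\eps>0$, I would choose $\varrho\in F(\Gamma_n)$ with $\int_X\varrho^p\,d\mu<\eps$ and then replace $\varrho$ by $\varrho\chi_B$: this leaves $\int_\gamma\varrho\,ds$ unchanged for every $\gamma\in\Gamma_n$ (because $|\gamma|\subseteq B$) and does not increase $\int_X\varrho^p\,d\mu$, so $\varrho\chi_B\in F(\Gamma_n)$ and $\int_B\varrho^p\,d\mu<\eps$. H\"older's inequality with exponents $p/q$ and $p/(p-q)$ (the case $q=p$ being trivial) then gives
$$
\int_X\varrho^q\,d\mu=\int_B\varrho^q\,d\mu\le\mu(B)^{1-q/p}\Big(\int_B\varrho^p\,d\mu\Big)^{q/p}<\mu(B)^{1-q/p}\eps^{q/p},
$$
whence ${\rm Mod}_q(\Gamma_n)\le\mu(B)^{1-q/p}\eps^{q/p}$; letting $\eps\to0^+$ yields ${\rm Mod}_q(\Gamma_n)=0$. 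Combining this with the reduction of the previous paragraph, ${\rm Mod}_q(\Gamma)\le\sum_{n\in\bbbn}{\rm Mod}_q(\Gamma_n)=0$, which is the claim. The routine points --- Borel measurability of $\varrho\chi_B$ and the elementary bookkeeping --- should cause no difficulty; the step that needs care is the passage to curves lying inside a fixed ball, which is precisely where boundedness of rectifiable curves and countable subadditivity of the modulus enter.
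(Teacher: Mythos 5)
Your proof is correct. The paper does not actually give a proof of this lemma: it simply points the reader to \cite[Proposition~2.45]{BjoBjo} together with the remark that the result follows from H\"older's inequality and \cite[Proposition~1.37(c)]{BjoBjo}. What you have supplied is a self-contained version of that argument, built from the same two essential ingredients --- H\"older's inequality and the outer-measure properties of $\operatorname{Mod}_p$ (monotonicity and countable subadditivity, which the paper records via \cite[Theorem~5.2]{Haj}). You correctly identify the only genuine issue, namely that when $\mu(X)=\infty$ a small $L^p$-norm does not control the $L^q$-norm, and you resolve it by exhausting $\Gamma$ by the subfamilies $\Gamma_n$ of curves lying in $B(x_0,n)$ (noting that a rectifiable curve has compact, hence bounded, image and that balls have finite measure), truncating the admissible function to the ball, and applying H\"older there; countable subadditivity of $\operatorname{Mod}_q$ then finishes the argument. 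Every step checks out, including the trivial $q=p$ case and the Borel measurability of $\varrho\chi_B$.
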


We will also need the following important result; see, e.g., \cite[Theorem~5.7]{Haj} and \cite[Lemma~2.1]{BjoBjo}.
\begin{lemma}[Fuglede's lemma]
\label{fuglede}
Let $p \in [1, \infty)$ and assume that $\{g_k\}_{k=1}^\infty$ is a sequence of Borel functions that converges in $L^p(X)$ to a  Borel function $g\in L^p(X)$. Then, there is a subsequence  $\{g_{k_i}\}_{i=1}^\infty$, such that for ${\rm Mod}_p$-a.e.\@ curve $\gamma \in \Gamma(X)$, one has
\[
\int_\gamma g_{k_i}\,ds \to \int_\gamma g\,ds\quad \text{and} \quad \int_\gamma |g_{k_i} - g|\,ds \to 0\quad\mbox{as $i\to\infty$,}
\]
where all of the integrals are well defined and  finite.
\end{lemma}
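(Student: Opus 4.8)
The statement is the classical Fuglede lemma, and the plan is to reproduce the standard modulus argument using the machinery already set up above. First I would pass to a rapidly converging subsequence: since $g_k\to g$ in $L^p(X)$, I choose indices $k_1<k_2<\cdots$ with $\|g_{k_i}-g\|_{L^p(X)}\le 2^{-i}$ and set $h_i:=|g_{k_i}-g|$, which is a nonnegative Borel function with $\|h_i\|_{L^p(X)}\le 2^{-i}$. The goal then reduces to showing that outside a $p$-exceptional family one has $\int_\gamma h_i\,ds\to 0$, together with finiteness of all the curve integrals in the statement; the first convergence in the conclusion will follow at once from $\bigl|\int_\gamma g_{k_i}\,ds-\int_\gamma g\,ds\bigr|\le\int_\gamma h_i\,ds$.

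For the main estimate I would fix $m\in\bbbn$ and let $E_m$ be the family of curves $\gamma\in\Gamma(X)$ with $\limsup_{i\to\infty}\int_\gamma h_i\,ds>1/m$. The key point is to produce, for each $j\in\bbbn$, a single admissible function for $E_m$: if $\gamma\in E_m$ then $\int_\gamma h_i\,ds>1/m$ for infinitely many $i$, hence for at least one index $i\ge j$, so the tail $\rho_j:=m\sum_{i\ge j}h_i$, which is Borel with values in $[0,\infty]$, satisfies $\int_\gamma\rho_j\,ds\ge m\int_\gamma h_i\,ds>1$ by monotone convergence. Thus $\rho_j\in F(E_m)$ and $\mathrm{Mod}_p(E_m)\le\|\rho_j\|_{L^p(X)}^p$, while Minkowski's inequality gives $\|\rho_j\|_{L^p(X)}\le m\sum_{i\ge j}\|h_i\|_{L^p(X)}\le m\,2^{-j+1}$. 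Letting $j\to\infty$ yields $\mathrm{Mod}_p(E_m)=0$, and since $\{\gamma\in\Gamma(X):\int_\gamma h_i\,ds\not\to0\}=\bigcup_{m\in\bbbn}E_m$, countable subadditivity of $\mathrm{Mod}_p$ shows this family is $p$-exceptional.

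It remains to discard curves along which the integrals fail to be finite. For any $\lambda>0$, the function $\lambda|g|$ is admissible for $\{\gamma\in\Gamma(X):\int_\gamma|g|\,ds=\infty\}$, so that family has $p$-modulus at most $\lambda^p\|g\|_{L^p(X)}^p$ and hence modulus zero; the same argument applies to each $|g_{k_i}|$. Adjoining these countably many $p$-exceptional families to the one from the previous paragraph and using subadditivity once more, I obtain a single $p$-exceptional family outside of which $\int_\gamma|g|\,ds<\infty$, $\int_\gamma|g_{k_i}|\,ds<\infty$ for every $i$, and $\int_\gamma h_i\,ds\to 0$; for such $\gamma$ all the integrals in the statement are well defined and finite, the second convergence is immediate, and the first follows from the displayed estimate above. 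I do not anticipate a serious obstacle; the only slightly delicate step is manufacturing one admissible function for $E_m$ out of the ``infinitely often'' condition, which is exactly what the tail sum $\rho_j$ accomplishes, together with the observation that the bound $\|\rho_j\|_{L^p(X)}\le m\,2^{-j+1}$ can be driven to zero by enlarging $j$.
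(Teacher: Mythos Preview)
Your argument is correct and is precisely the standard proof of Fuglede's lemma: pass to a rapidly convergent subsequence, use the tail sums $\rho_j=m\sum_{i\ge j}h_i$ as admissible functions for the ``bad'' curve families $E_m$, and clean up with the elementary fact that curves on which a fixed $L^p$ function has infinite line integral form a $p$-exceptional family. The only minor remark is that the finiteness of $\int_\gamma h_i\,ds$ for each fixed $i$ (part of the ``all integrals are well defined and finite'' clause) is not explicitly singled out, but it follows automatically once you are outside $E_1$ together with the families $\{\int_\gamma|g|=\infty\}$ and $\{\int_\gamma|g_{k_i}|=\infty\}$, since $h_i\le|g_{k_i}|+|g|$.

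As for comparison with the paper: the paper does not give a proof of this lemma at all; it merely records the statement and points to \cite[Theorem~5.7]{Haj} and \cite[Lemma~2.1]{BjoBjo}. Your write-up is essentially the argument one finds in those references, so there is no methodological difference to discuss.
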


\subsection{Sobolev spaces in metric-measure spaces}
\label{subsect:sobolev}
A Borel measurable function $g\colon X \to [0, \infty]$ is called an \emph{upper gradient} of a Borel measurable function $u\colon X \to [-\infty,\infty]$ if
\begin{equation}
\label{eq:ug_def}
|u(\gamma(a)) - u(\gamma(b))| \le \int_\gamma g\,ds,
\end{equation}
for every rectifiable curve $\gamma\colon [a,b]\to X$, with the convention that $|(\pm\infty)-(\pm\infty)|=\infty$. The function $g$ shall be referred to as a \emph{$p$-weak upper gradient} of $u$, $p\in[1,\infty)$, if  \eqref{eq:ug_def} holds true for ${\rm Mod}_p$-a.e.\@ curve $\gamma\in\Gamma(X)$.

The next result shows that $p$-weak upper gradients can be approximated by upper gradients in the $L^p$ norm; see e.g. \cite[Lemma~6.3]{Haj}
\begin{lemma}
\label{T6}
If $g$ is a $p$-weak upper gradient of $u$ which is finite $\mu$-a.e., then for every $\eps\in (0,\infty)$ there is an upper gradient $g_\eps$ of $u$ such that
$$
g_\eps\geq g\, \text{ pointwise everywhere in $X$}
\quad
\text{and}
\quad
\Vert g_\eps-g\Vert_{L^p(X)}<\eps.
$$
\end{lemma}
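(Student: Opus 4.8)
The plan is to absorb into a small $L^p$ perturbation the curves along which $g$ fails to be an honest upper gradient, using nothing more than the definition of the $p$-modulus. Let $\Gamma_0\subseteq\Gamma(X)$ denote the family of all non-constant rectifiable curves $\gamma$ along which the upper gradient inequality \eqref{eq:ug_def} fails for $g$. Since $g$ is a $p$-weak upper gradient of $u$, this is precisely a $p$-exceptional family, i.e.\ ${\rm Mod}_p(\Gamma_0)=0$. The heart of the argument is then to produce a single Borel function $\varrho\in L^p(X)$ with $\Vert\varrho\Vert_{L^p(X)}<\eps$ and $\int_\gamma\varrho\,ds=\infty$ for every $\gamma\in\Gamma_0$.

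To build $\varrho$, for each $n\in\bbbn$ I would invoke ${\rm Mod}_p(\Gamma_0)=0$, i.e.\ $\inf_{\varrho\in F(\Gamma_0)}\int_X\varrho^p\,d\mu=0$, to choose a Borel function $\varrho_n\in F(\Gamma_0)$ (so $\int_\gamma\varrho_n\,ds\ge1$ for all $\gamma\in\Gamma_0$) with $\Vert\varrho_n\Vert_{L^p(X)}<2^{-n-1}\eps$, and then set $\varrho:=\sum_{n=1}^\infty\varrho_n$. This $\varrho$ is Borel, and by Minkowski's inequality for series $\Vert\varrho\Vert_{L^p(X)}\le\sum_{n=1}^\infty\Vert\varrho_n\Vert_{L^p(X)}<\eps$. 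For $\gamma\in\Gamma_0$, applying the monotone convergence theorem along the arc-length parameterization of $\gamma$ gives $\int_\gamma\varrho\,ds=\sum_{n=1}^\infty\int_\gamma\varrho_n\,ds\ge\sum_{n=1}^\infty 1=\infty$.

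Finally I would take $g_\eps:=g+\varrho$. This function is Borel and satisfies $g_\eps\ge g$ pointwise everywhere; since $g$ is finite $\mu$-a.e., we have $g_\eps-g=\varrho$ $\mu$-a.e., whence $\Vert g_\eps-g\Vert_{L^p(X)}=\Vert\varrho\Vert_{L^p(X)}<\eps$. To check that $g_\eps$ is an upper gradient of $u$, fix a rectifiable curve $\gamma\colon[a,b]\to X$. If $\gamma$ is constant, \eqref{eq:ug_def} is trivial. If $\gamma\in\Gamma(X)\setminus\Gamma_0$, then \eqref{eq:ug_def} holds with $g$, hence with the larger function $g_\eps$. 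If $\gamma\in\Gamma_0$, then $\int_\gamma g_\eps\,ds\ge\int_\gamma\varrho\,ds=\infty$, so \eqref{eq:ug_def} holds trivially. Thus $g_\eps$ has all the required properties.

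I do not anticipate a genuine obstacle here. The two places that warrant a little care are: (i) the step ``${\rm Mod}_p(\Gamma_0)=0$ yields an $L^p$ gauge that is infinite along every curve in $\Gamma_0$'', which is exactly the series construction above combined with the countable subadditivity of ${\rm Mod}_p$ recorded after its definition; and (ii) the interchange of summation with integration along $\gamma$, which is simply the monotone convergence theorem for the nonnegative measurable functions $\varrho_n\circ\widetilde\gamma$ on $[0,\ell(\gamma)]$. The Borel measurability of $g_\eps$ and the use of the $\mu$-a.e.\ finiteness of $g$ to pass from the pointwise bound $g_\eps\ge g$ to the $L^p$ estimate are routine.
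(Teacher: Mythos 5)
Your proof is correct and is precisely the standard argument: exhaust the $p$-exceptional curve family by an $L^p$-small Borel gauge that integrates to $+\infty$ along each bad curve, then add it to $g$. The paper does not prove Lemma~\ref{T6} but cites it from Haj\l{}asz's survey, where the proof is this same construction; the only cosmetic remark is that you do not actually need countable subadditivity of ${\rm Mod}_p$, since the series construction only uses that the infimum defining ${\rm Mod}_p(\Gamma_0)$ is zero.
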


For $p\in[1,\infty)$ we define $\widetilde{N}^{1,p}(X)$, to be the space of all Borel measurable functions $u: X \to [-\infty,\infty]$ for which
\begin{equation}
  \label{eq:def-N1p-norm}
\|u\|_{N^{1,p}(X)}:=\Big(\| u \|^p_{L^p(X)} + \inf_g \|g\|^p_{L^p(X)}\Big)^{1/p} < \infty,
\end{equation}
where the infimum is taken over all upper gradients $g$ of $u$. 
Equivalently,
we can take the infimum over all $p$-weak upper gradients in \eqref{eq:def-N1p-norm} since every $p$-weak upper gradient can be approximated in $L^p$ by upper gradients (Lemma~\ref{T6}).

The functional $\|\cdot\|_{N^{1,p}(X)}$ is a seminorm on $\widetilde{N}^{1,p}$ and a norm on ${N}^{1,p}(X) := \widetilde{N}^{1,p}(X)/\mathord\sim$, where the equivalence relation $u\sim v$ is given by $\|u-v\|_{N^{1,p}(X)} = 0$. Furthermore, the  space ${N}^{1,p}(X)$ is complete and thus a Banach space, see \cite[Theorem~3.7]{Sha}.

For the next result see, e.g., \cite[Corollary~7.7]{Haj}.
\begin{lemma}
\label{T3}
If $u,v\in\widetilde{N}^{1,p}(X)$ and $u=v$ pointwise $\mu$-a.e. in $X$, then $u\sim v$ i.e., the two functions define the same element in $N^{1,p}(X)$.
\end{lemma}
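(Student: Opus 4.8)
The plan is to show that $\|u-v\|_{N^{1,p}(X)}=0$; by the definition of the equivalence relation this is exactly the assertion $u\sim v$. Since $u=v$ holds $\mu$-a.e. and both functions are finite $\mu$-a.e. (being in $L^p(X)$), the function $w:=u-v$ (set equal to $0$ on the $\mu$-null set where $u$ or $v$ fails to be finite) is Borel measurable and vanishes $\mu$-a.e., so $\|w\|_{L^p(X)}=0$ and it remains only to prove that $\inf_g\|g\|_{L^p(X)}=0$, the infimum being taken over all upper gradients $g$ of $w$. By Lemma~\ref{T6} (applied with the finite-valued function $g\equiv 0$), this will follow once we know that the zero function is a $p$-weak upper gradient of $w$, i.e. that $w(\gamma(a))=w(\gamma(b))$ for ${\rm Mod}_p$-a.e. curve $\gamma\colon[a,b]\to X$.

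To prepare for this I would first isolate two elementary properties of the $p$-modulus. (i) If $\varphi\in L^p(X)$ is nonnegative, then $\int_\gamma\varphi\,ds<\infty$ for ${\rm Mod}_p$-a.e. $\gamma$: the family of curves on which the integral is infinite admits $\varphi/t$ as an admissible function for every $t>0$, so its $p$-modulus is at most $t^{-p}\|\varphi\|_{L^p(X)}^p\to 0$. (ii) If $A\subseteq X$ is $\mu$-null, then ${\rm Mod}_p$-a.e. $\gamma$ meets $A$ in zero length, in the sense that $\int_\gamma\chi_{A'}\,ds=0$, where $A'\supseteq A$ is a Borel $\mu$-null set (which exists by Borel regularity of $\mu$; in our application $A$ will already be Borel): indeed $n\chi_{A'}$ is admissible for the family $\{\gamma:\int_\gamma\chi_{A'}\,ds\geq n^{-1}\}$, whose $p$-modulus is therefore at most $n^p\mu(A')=0$, and one sums over $n\in\bbbn$ using countable subadditivity of ${\rm Mod}_p$.

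Now choose upper gradients $g_u,g_v\in L^p(X)$ of $u$ and $v$ (these exist because $u,v\in\widetilde{N}^{1,p}(X)$, so the infima defining their norms are finite), and let $A$ be the union of the $\mu$-null sets $\{u\neq v\}$, $\{|u|=\infty\}$, and $\{|v|=\infty\}$. Discarding a curve family of zero $p$-modulus---using (i) with $\varphi:=g_u+g_v$ and (ii) with this set $A$---we may assume that $\gamma$ is rectifiable with arc-length parametrization $\widetilde\gamma$, that $\int_\gamma(g_u+g_v)\,ds<\infty$, and that $\widetilde\gamma(t)\notin A$ for a.e. $t\in[0,\ell(\gamma)]$. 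The upper gradient inequality for $g_u$, applied to every subcurve of $\gamma$, bounds the increments of $u\circ\widetilde\gamma$ by the $L^1$ function $g_u\circ\widetilde\gamma$; since $\widetilde\gamma$ avoids $\{|u|=\infty\}$ in length, $u\circ\widetilde\gamma$ is finite at some parameter, hence finite everywhere and absolutely continuous on $[0,\ell(\gamma)]$, and likewise $v\circ\widetilde\gamma$ is absolutely continuous. Because $\widetilde\gamma$ avoids $\{u\neq v\}$ in length, $u\circ\widetilde\gamma=v\circ\widetilde\gamma$ a.e. on $[0,\ell(\gamma)]$, and two absolutely continuous (hence continuous) functions that agree a.e. agree everywhere; in particular $u(\gamma(a))=v(\gamma(a))$ and $u(\gamma(b))=v(\gamma(b))$ are finite, so $w(\gamma(a))=0=w(\gamma(b))$. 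Thus $0$ is a $p$-weak upper gradient of $w$, and the reduction in the first paragraph completes the proof.

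The main obstacle is not conceptual but lies in the careful bookkeeping: verifying that the discarded curve families genuinely have zero $p$-modulus, justifying the absolute continuity of $u\circ\widetilde\gamma$ and $v\circ\widetilde\gamma$ along ${\rm Mod}_p$-a.e. curve, and (in the general form of (ii)) passing from a $\mu$-null set to a Borel $\mu$-null superset via Borel regularity of $\mu$. The genuine content of the lemma is property (ii)---that $\mu$-null sets are negligible for the $p$-modulus---everything else being routine manipulation with upper gradients.
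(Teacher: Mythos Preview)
Your proof is correct and is essentially the standard argument; the paper itself does not prove this lemma but simply cites \cite[Corollary~7.7]{Haj}, and what you have written is precisely the content of that reference (curves avoid $\mu$-null sets in length, $u$ and $v$ are absolutely continuous along ${\rm Mod}_p$-a.e.\ curve, hence agree at the endpoints). One minor remark: in your application the set $A=\{u\neq v\}\cup\{|u|=\infty\}\cup\{|v|=\infty\}$ is already Borel because $u$ and $v$ are Borel, so the appeal to Borel regularity in (ii) is not actually needed here.
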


For $p\in[1,\infty)$, every $u\in N^{1,p}(X)$ has a \textit{minimal $p$-weak upper gradient} $g_u\in L^p(X)$ in the sense that if $g\in L^p(X)$ is another  $p$-weak upper gradient of $u$, then $g\geq g_u$ pointwise $\mu$-a.e. in $X$, see, e.g., \cite[Theorem~7.16]{Haj}.  Hence, the infimum in \eqref{eq:def-N1p-norm} is attained with $g_u$, which is given uniquely up to pointwise a.e. equality.

Recall that the \emph{pointwise lower Lipschitz-constant} of a function $\eta\colon X\to\mathbb{R}$ is given by
\begin{equation}
\label{lillip}
{\rm lip}\,\eta(x):=\liminf_{r\to 0^+}\sup_{y\in B(x,r)}\frac{|\eta(x)-\eta(y)|}{r},\quad x\in X.    
\end{equation}
For the next lemma, see, e.g., \cite[Lemma~6.7]{Haj} or \cite[Lemma~6.2.6]{HKST}.
\begin{lemma}
\label{T1}
${\rm lip}\,\eta$ is an upper gradient of any Lipschitz continuous function $\eta$ on a metric space.
\end{lemma}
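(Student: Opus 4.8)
The plan is to reduce the upper gradient inequality \eqref{eq:ug_def} to a one-dimensional statement via the arc-length parametrization. Fix a rectifiable curve $\gamma\colon[a,b]\to X$; the inequality is trivial when $\gamma$ is constant, so assume $\ell(\gamma)>0$ and let $\widetilde{\gamma}\colon[0,\ell(\gamma)]\to X$ be the arc-length parametrization, which is $1$-Lipschitz by \cite[Theorem~3.2]{Haj} and satisfies $\widetilde{\gamma}(0)=\gamma(a)$ and $\widetilde{\gamma}(\ell(\gamma))=\gamma(b)$. Set $f:=\eta\circ\widetilde{\gamma}$. Since $\eta$ is Lipschitz and $\widetilde{\gamma}$ is $1$-Lipschitz, $f$ is Lipschitz on the compact interval $[0,\ell(\gamma)]$, hence absolutely continuous, so $f$ is differentiable at a.e.\ $t$ and $f(\ell(\gamma))-f(0)=\int_0^{\ell(\gamma)}f'(t)\,dt$. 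Therefore, once the pointwise bound
$$
|f'(t)|\le{\rm lip}\,\eta(\widetilde{\gamma}(t))\qquad\text{for a.e. }t\in[0,\ell(\gamma)]
$$
is established, we obtain
$$
|\eta(\gamma(b))-\eta(\gamma(a))|=|f(\ell(\gamma))-f(0)|\le\int_0^{\ell(\gamma)}|f'(t)|\,dt\le\int_0^{\ell(\gamma)}{\rm lip}\,\eta(\widetilde{\gamma}(t))\,dt=\int_\gamma{\rm lip}\,\eta\,ds,
$$
which is exactly what is claimed. (Along the way one also uses that ${\rm lip}\,\eta$ is Borel measurable; this is a standard fact for Lipschitz $\eta$, and may be checked by restricting the radius $r$ in \eqref{lillip} to rational values and the point $y$ to a countable dense subset of $X$, exploiting the continuity of $\eta$.)

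For the pointwise bound I would work at a point $t$ enjoying two properties that hold for a.e.\ $t$: first, $f'(t)$ exists; second, $d(\widetilde{\gamma}(t+h),\widetilde{\gamma}(t))/|h|\to1$ as $h\to0$, i.e.\ the arc-length parametrization has unit metric speed almost everywhere. Fix $\delta\in(0,1)$. By the definition \eqref{lillip}, pick radii $r_k\to0^+$ along which $r_k^{-1}\sup_{y\in B(\widetilde{\gamma}(t),r_k)}|\eta(\widetilde{\gamma}(t))-\eta(y)|$ converges to ${\rm lip}\,\eta(\widetilde{\gamma}(t))$, and for all large $k$ set $h_k:=r_k/(1+2\delta)$. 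The unit-speed property gives $d(\widetilde{\gamma}(t+h_k),\widetilde{\gamma}(t))\le(1+\delta)h_k<r_k$ for large $k$, so $\widetilde{\gamma}(t+h_k)\in B(\widetilde{\gamma}(t),r_k)$ and hence
$$
\frac{|f(t+h_k)-f(t)|}{h_k}\le\frac{r_k}{h_k}\cdot\frac{1}{r_k}\sup_{y\in B(\widetilde{\gamma}(t),r_k)}|\eta(\widetilde{\gamma}(t))-\eta(y)|=(1+2\delta)\cdot\frac{1}{r_k}\sup_{y\in B(\widetilde{\gamma}(t),r_k)}|\eta(\widetilde{\gamma}(t))-\eta(y)|.
$$
Since $h_k\to0^+$, the left-hand side tends to $|f'(t)|$ and the right-hand side to $(1+2\delta)\,{\rm lip}\,\eta(\widetilde{\gamma}(t))$; letting $\delta\to0^+$ completes the estimate.

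The main obstacle is precisely this last step. The naive inclusion $\widetilde{\gamma}(t+h)\in\overline{B}(\widetilde{\gamma}(t),|h|)$ only compares $|f'(t)|$ with the larger ``upper'' dilation $\limsup_{r\to0^+}r^{-1}\sup_{y\in B(x,r)}|\eta(x)-\eta(y)|$, which in general is strictly bigger than ${\rm lip}\,\eta(x)$; to capture the sharper quantity defined by a $\liminf$, one must tune the increment $h_k$ to the radius $r_k$ realizing the $\liminf$, and it is exactly the almost-everywhere unit metric speed of the arc-length parametrization that permits this matching. The remaining ingredients — absolute continuity and the fundamental theorem of calculus for $f$, and Borel measurability of ${\rm lip}\,\eta$ — are routine.
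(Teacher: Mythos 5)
Your proof is correct and follows the same arc-length--parametrization reduction as the proofs in the sources the paper cites (the paper does not reproduce the argument for this lemma, only referring to \cite{Haj} and \cite{HKST}). The one substantive comment concerns your final paragraph: the ``unit metric speed a.e.'' ingredient is superfluous, and the stated reason for needing it --- that without it the naive estimate would only compare $|f'(t)|$ with the $\limsup$ quantity $\limsup_{r\to0^+}r^{-1}\sup_{y\in B(x,r)}|\eta(x)-\eta(y)|$ --- does not hold up. Once $f'(t)$ exists the difference quotient has a genuine two-sided limit, so you may evaluate $|f'(t)|=\lim_{k\to\infty}|f(t+h_k)-f(t)|/h_k$ along \emph{any} positive sequence $h_k\to0^+$; the crucial freedom is to let $h_k$ track the radii $r_k$ realizing the $\liminf$, and that is always possible. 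Concretely, with $h_k:=r_k/(1+\delta)$ the $1$-Lipschitz property of $\widetilde{\gamma}$ alone gives $d(\widetilde{\gamma}(t+h_k),\widetilde{\gamma}(t))\le h_k<r_k$, hence $\widetilde{\gamma}(t+h_k)\in B(\widetilde{\gamma}(t),r_k)$, and your computation runs unchanged to $|f'(t)|\le(1+\delta)\,{\rm lip}\,\eta(\widetilde{\gamma}(t))$; letting $\delta\to0^+$ finishes. The upper dilation would intrude only if one insisted on bounding $\limsup_{h\to0^+}|f(t+h)-f(t)|/h$ uniformly over all $h$ without using differentiability at $t$ to pick the sequence --- and that is not the situation here.
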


\begin{lemma}
\label{leibniz}
Fix $p\in[1,\infty)$ and suppose that $u\in N^{1,p}(X)$ and $\eta\colon X\to\mathbb{R}$ is a bounded Lipschitz function. Then $\eta u\in N^{1,p}(X)$ and the function $h:=|\eta|g_u+|u|\,{\rm lip}\,\eta$ is a $p$-weak upper gradient for $\eta u$, where $g_u\in L^p(X)$ is the minimal $p$-weak upper gradient of $u$.
\end{lemma}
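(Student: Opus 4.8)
\textbf{Proof strategy for Lemma~\ref{leibniz}.}
The plan is to produce a \emph{genuine} upper gradient (not merely a $p$-weak one) for $\eta u$ along almost every curve, using a suitable pointwise $p$-weak upper gradient $g$ of $u$ and the Lipschitz upper gradient $\operatorname{lip}\eta$ of $\eta$ from Lemma~\ref{T1}. First I would reduce to the case where $u$ is finite everywhere: since $u\in N^{1,p}(X)$, we have $|u|<\infty$ $\mu$-a.e., and by Lemma~\ref{T3} we may replace $u$ on a set of measure zero without changing the element of $N^{1,p}(X)$ (one must be a little careful here, since curves can see null sets; the cleanest route is to work with a Borel representative for which $g_u$ is a $p$-weak upper gradient and then note that $|u|=\infty$ only on a $p$-exceptional set of curves, which we discard). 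Having fixed such a representative, the first substantive step is the product rule along a curve: for a rectifiable curve $\gamma\colon[a,b]\to X$ parametrized by arc length along which $u\circ\gamma$ is absolutely continuous with $|(u\circ\gamma)'|\le g_u(\gamma)$ a.e., I would write
$$
(\eta u)(\gamma(b))-(\eta u)(\gamma(a))
=\int_a^b \big(\eta(\gamma(t))\,(u\circ\gamma)'(t) + u(\gamma(t))\,(\eta\circ\gamma)'(t)\big)\,dt,
$$
which is valid because $\eta\circ\gamma$ is Lipschitz and $u\circ\gamma$ is absolutely continuous, so their product is absolutely continuous and the Leibniz rule for derivatives holds a.e.

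The second step is to estimate the integrand. We have $|\eta(\gamma(t))\,(u\circ\gamma)'(t)|\le |\eta(\gamma(t))|\,g_u(\gamma(t))$ for a.e.\ $t$, and $|u(\gamma(t))\,(\eta\circ\gamma)'(t)|\le |u(\gamma(t))|\,\operatorname{lip}\eta(\gamma(t))$ for a.e.\ $t$; the latter uses that $\operatorname{lip}\eta$ is an upper gradient of $\eta$ (Lemma~\ref{T1}), hence $|(\eta\circ\gamma)'(t)|\le \operatorname{lip}\eta(\gamma(t))$ for a.e.\ $t$ along any rectifiable $\gamma$ — this is exactly the defining inequality for upper gradients applied to subcurves, giving the pointwise bound on the metric derivative. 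Combining,
$$
|(\eta u)(\gamma(b))-(\eta u)(\gamma(a))|\le \int_\gamma \big(|\eta|\,g_u+|u|\,\operatorname{lip}\eta\big)\,ds = \int_\gamma h\,ds,
$$
so $h$ is a $p$-weak upper gradient of $\eta u$ provided the exceptional curves where the absolute-continuity / derivative-bound facts fail form a $p$-exceptional family — which is precisely the content of the characterization of $g_u$ as a $p$-weak upper gradient (the a.e.-curve versions of these statements are standard; one may cite the reference \cite{Haj} used throughout). Finally, $h\in L^p(X)$ since $g_u\in L^p(X)$, $\eta$ is bounded, $u\in L^p(X)$, and $\operatorname{lip}\eta$ is bounded (a Lipschitz function has $\operatorname{lip}\eta\le \operatorname{Lip}\eta<\infty$ everywhere); together with $\eta u\in L^p(X)$ from boundedness of $\eta$, this shows $\eta u\in N^{1,p}(X)$.

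I expect the main obstacle to be the careful bookkeeping with curves and null sets: the statement ``$u\circ\gamma$ is absolutely continuous with derivative controlled by $g_u\circ\gamma$ for $\operatorname{Mod}_p$-a.e.\ curve'' must be invoked in a form that is uniform enough to combine with the Lipschitz estimate for $\eta\circ\gamma$, and one must make sure the representative of $u$ is chosen so that ``$u=\infty$'' does not occur along generic curves (otherwise the Leibniz computation is meaningless). Once the correct a.e.-curve statements are in hand (all available from the cited sources), the algebra is the elementary one-variable Leibniz rule for a product of an absolutely continuous and a Lipschitz function, and the $L^p$ membership is immediate. An alternative, slightly softer approach would be to approximate: truncate $u$ at level $n$ to get bounded $u_n\in N^{1,p}(X)$ with $g_{u_n}\le g_u$, establish the product rule for $\eta u_n$ (where everything is bounded and the curve arguments are cleaner), and pass to the limit using $u_n\to u$ and $\eta u_n\to\eta u$ in $N^{1,p}(X)$ together with lower semicontinuity of $p$-weak upper gradients; but the direct argument above is shorter and is what I would write.
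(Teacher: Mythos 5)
Your proposal is correct and follows essentially the same route as the paper: the paper simply cites the Leibniz rule for $p$-weak upper gradients \cite[Lemma~6.3.28]{HKST} together with absolute continuity on ${\rm Mod}_p$-a.e.\ curve \cite[Lemma~7.6]{Haj} and Lemma~\ref{T1}, while you unpack the same ingredients into a direct curve-level product-rule argument. The only difference is that you prove what the paper quotes as a reference; the decomposition $h=|\eta|g_u+|u|\,{\rm lip}\,\eta$, the use of ${\rm lip}\,\eta$ as the upper gradient of $\eta$, and the $L^p$-membership conclusion are all identical.
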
 

\begin{proof}
The Leibniz rule for $p$-weak upper gradients, \cite[Lemma~6.3.28]{HKST}, the fact that functions in $N^{1,p}(X)$ are absolutely continuous on $\operatorname{Mod}_p$-a.e. curve, \cite[Lemma~7.6]{Haj}, and Lemma~\ref{T1}
imply that the function $h:=|\eta| g_u+|u|\operatorname{lip} \eta$
is a $p$-weak upper gradient for $\eta u$. Since $\eta u\in L^p(X)$ and $h\in L^p(X)$, it follows that $\eta u\in N^{1,p}(X)$.
\end{proof}

We say that $(X,d,\mu)$ supports a \emph{$p$-Poincar\'e inequality}, $p\in[1,\infty)$, if there exist constants $c_{\PI} > 0$ and $\lambda \geq 1$ such that
\begin{equation}
\label{poincare}
\mvint_B |u-u_B|\,d\mu \le c_{\PI}\diam (B) \Biggl(\,\,\mvint_{\lambda B} g^p\,d\mu\Biggr)^{1/p}
\end{equation}
for all balls $B\subseteq X$,  all Borel functions $u\in L^1_\loc(X)$, and all upper gradients $g$ of $u$. Recall that we always assume that $\mu$ is a Borel regular doubling measure in this setting.
In this situation we say that the space supports a $p$-Poincar\'e inequality with constants $c_{\PI}$ and $\lambda$.

The following lemma is an immediate consequence of 
Lemma~\ref{T6}.
\begin{lemma}
\label{pwkpoin}
Suppose that $X$ supports a $p$-Poincar\'e inequality for some $p\in[1,\infty)$. 
If $u\in L^1_{\rm loc}(X)$ is Borel, then
\begin{equation}
\label{poin-p-weak}
\mvint_B |u-u_B|\,d\mu \le c_{\PI}\diam (B) \Biggl(\,\,\mvint_{\lambda B} g^p\,d\mu\Biggr)^{1/p},
\end{equation}
for all balls $B\subseteq X$ and all $p$-weak upper gradients $g$ of $u$ that are finite $\mu$-a.e.
\end{lemma}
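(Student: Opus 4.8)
The plan is to deduce \eqref{poin-p-weak} directly from the classical Poincar\'e inequality \eqref{poincare} by approximating the given $p$-weak upper gradient $g$ from above by genuine upper gradients via Lemma~\ref{T6}, and then passing to the limit in the resulting inequalities, one ball at a time.

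Fix a ball $B\subseteq X$ and a $p$-weak upper gradient $g$ of $u$ that is finite $\mu$-a.e. If $\mvint_{\lambda B} g^p\,d\mu=\infty$, then \eqref{poin-p-weak} holds trivially for this $B$, so we may assume $g\in L^p(\lambda B)$. Since $g$ is finite $\mu$-a.e., for each $\varepsilon\in(0,\infty)$ Lemma~\ref{T6} yields an upper gradient $g_\varepsilon$ of $u$ with $g_\varepsilon\ge g$ everywhere on $X$ and $\Vert g_\varepsilon-g\Vert_{L^p(X)}<\varepsilon$; in particular $g_\varepsilon-g\ge 0$ belongs to $L^p(\lambda B)$, hence $g_\varepsilon\in L^p(\lambda B)$ as well. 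Because $u\in L^1_\loc(X)$ is Borel and $g_\varepsilon$ is an (honest) upper gradient of $u$, the Poincar\'e inequality \eqref{poincare} applies and gives
$$
\mvint_B |u-u_B|\,d\mu \le c_{\PI}\diam(B)\Bigl(\,\mvint_{\lambda B} g_\varepsilon^p\,d\mu\Bigr)^{1/p}.
$$

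It then remains to let $\varepsilon\to 0^+$. Since $\mu(\lambda B)<\infty$, the global convergence $g_\varepsilon\to g$ in $L^p(X)$ restricts to convergence in $L^p(\lambda B)$, so by the reverse triangle inequality $\Vert g_\varepsilon\Vert_{L^p(\lambda B)}\to\Vert g\Vert_{L^p(\lambda B)}$, and dividing by $\mu(\lambda B)^{1/p}$ gives $\bigl(\mvint_{\lambda B} g_\varepsilon^p\,d\mu\bigr)^{1/p}\to\bigl(\mvint_{\lambda B} g^p\,d\mu\bigr)^{1/p}$. Passing to the limit on the right-hand side yields \eqref{poin-p-weak} for the fixed ball $B$, and since $B$ was arbitrary the proof is complete. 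There is no genuine obstacle here; the only minor points are the separate (trivial) treatment of the case $g\notin L^p(\lambda B)$ and the remark that the $L^p$-approximation of Lemma~\ref{T6}, a priori only global, localizes to $\lambda B$ precisely because that ball has finite measure.
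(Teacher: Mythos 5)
Your proof is correct and is exactly the argument the paper intends: the paper declares the lemma an ``immediate consequence of Lemma~\ref{T6}'' and gives no further detail, and the details you supply (approximate $g$ from above by honest upper gradients $g_\varepsilon$ in $L^p$, apply \eqref{poincare} to each $g_\varepsilon$, localize the $L^p$-convergence to $\lambda B$ using $\mu(\lambda B)<\infty$, and pass to the limit) are precisely the right ones. The separate handling of the trivial case $\mvint_{\lambda B} g^p\,d\mu=\infty$ is a sensible precaution, though one could alternatively note that the inequality for $g_\varepsilon$ already bounds the left side by the term with $g_\varepsilon$, which dominates nothing when the limit is $\infty$.
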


\subsection{Uniformly convex spaces} 
\label{Clar}
We begin with a definition due to Clarkson \cite{Clark}.

We say that a normed space $(Z,\|\cdot\|)$ is \emph{uniformly convex} if for every $\eps\in(0,\infty)$, there exists $\delta\in(0,\infty)$ with the property that $\|x + y\| \le 2(1-\delta)$ whenever $x,y\in Z$ satisfy $\|x\|=\|y\| =1$, and $\|x - y\| > \eps$. 

From a geometric point of view, uniform convexity implies that the boundary of the unit ball does not contain any segments and that the unit ball is, in a sense, uniformly ``round''. 

The next result is well known, but it is not easy to find a proof in the literature.
\begin{lemma}
\label{unifconvex-clsd}
A normed space $(Z,\|\cdot\|)$ is uniformly convex if and only if for every $\eps\in(0,\infty)$, there exists $\delta\in(0,\infty)$ with the property that $\|x + y\| \le 2(1-\delta)$ whenever $x,y\in Z$ satisfy $\|x\|\leq1$, $\|y\|\leq 1$, and $\|x - y\| > \eps$.  
\end{lemma}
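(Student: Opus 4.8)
The plan is to prove the nontrivial direction, namely that the weaker-looking condition (with $\|x\|\le 1$, $\|y\|\le 1$) implies uniform convexity in the original sense; the converse is immediate since the hypothesis $\|x\|=\|y\|=1$ is a special case of $\|x\|\le 1$, $\|y\|\le 1$. Actually, it is cleaner to show that the original definition implies the stronger-looking statement, since that is the direction requiring work. So assume $(Z,\|\cdot\|)$ is uniformly convex in Clarkson's sense, fix $\eps\in(0,\infty)$, and let $\delta>0$ be the modulus associated to $\eps/2$ (a specific choice of the "input" parameter to be determined during the argument). Given $x,y\in Z$ with $\|x\|\le 1$, $\|y\|\le 1$, and $\|x-y\|>\eps$, the goal is to bound $\|x+y\|$ away from $2$.

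The main idea is a reduction to the unit sphere by radial projection. Note first that we may assume $x,y\ne 0$, since $\|x-y\|>\eps>0$ forces at least one to be nonzero, and if say $y=0$ then $\|x+y\|=\|x\|\le 1\le 2(1-\delta')$ for any $\delta'\le 1/2$, so that case is trivial; the substantive case has both $x,y\ne 0$. Set $x':=x/\|x\|$ and $y':=y/\|y\|$, so $\|x'\|=\|y'\|=1$. The key estimate is that $x$ is close to $x'$: indeed $\|x-x'\|=|\,1-\|x\|\,|\le 1-\|x\|$ when $\|x\|\le 1$, wait — more precisely $\|x - x'\| = \|x\|\,\|x' - x'/\|x\|\| $, let me just write $\|x-x'\| = (1-\|x\|)\|x'\| = 1-\|x\|$ when $\|x\|\le 1$, and similarly $\|y-y'\| = 1-\|y\|$. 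So $x'$ and $y'$ are respectively within $1-\|x\|$ and $1-\|y\|$ of $x$ and $y$.

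Now I would split into two regimes according to how small $\|x\|$ and $\|y\|$ are. \textbf{Regime 1:} if either norm is small, say $\|x\|\le 1-\eps/4$ (or symmetrically for $y$), then directly $\|x+y\|\le \|x\|+\|y\|\le (1-\eps/4)+1 = 2 - \eps/4 = 2(1-\eps/8)$, which gives the desired bound with a $\delta$ depending only on $\eps$. \textbf{Regime 2:} if both $\|x\|>1-\eps/4$ and $\|y\|>1-\eps/4$, then $\|x-x'\|<\eps/4$ and $\|y-y'\|<\eps/4$, so by the triangle inequality $\|x'-y'\|\ge \|x-y\|-\|x-x'\|-\|y-y'\| > \eps - \eps/2 = \eps/2$. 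Since $\|x'\|=\|y'\|=1$ and $\|x'-y'\|>\eps/2$, Clarkson's definition applied with parameter $\eps/2$ yields $\delta>0$ with $\|x'+y'\|\le 2(1-\delta)$. Then $\|x+y\| \le \|x'+y'\| + \|x-x'\| + \|y-y'\| < 2(1-\delta) + \eps/2$. Hmm, this last term $\eps/2$ is too large to simply absorb, so I need to be more careful: instead of cutting at $1-\eps/4$, I should cut at $1-\eta$ for a small $\eta=\eta(\eps,\delta)$ chosen after $\delta$ is fixed — e.g. run Regime 1 with threshold $1-\eta$ giving $\|x+y\|\le 2-\eta$, and in Regime 2 get $\|x-x'\|,\|y-y'\|<\eta$, hence $\|x'-y'\|>\eps-2\eta>\eps/2$ provided $\eta<\eps/4$, apply Clarkson's modulus for $\eps/2$ to get a $\delta_0>0$ independent of $\eta$, and then $\|x+y\|<2(1-\delta_0)+2\eta$; finally choose $\eta:=\min\{\eps/4,\ \delta_0\}$ so that this is at most $2(1-\delta_0)+2\delta_0 $ — still not quite, so take $\eta:=\min\{\eps/4,\ \delta_0/2\}$, giving $\|x+y\|<2(1-\delta_0)+\delta_0 = 2 - \delta_0 = 2(1-\delta_0/2)$, and set $\delta:=\min\{\delta_0/2,\ \eta\}=\delta_0/2$ to cover both regimes. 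The main (mild) obstacle is exactly this bookkeeping — ordering the choices of constants correctly so that the additive error from radial projection is swallowed by the gain $\delta_0$ from Clarkson's inequality — but there is no conceptual difficulty, and the proof is a short $\eps$–$\delta$ argument.
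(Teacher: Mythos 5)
Your approach is essentially identical to the paper's: reduce to the unit sphere via radial projection, split into a ``small norm'' case handled by the triangle inequality and a ``near the sphere'' case handled by applying the original definition of uniform convexity to the normalized vectors, ordering the constants so the projection error is absorbed by the gain from uniform convexity. The paper uses the modulus associated to $\eps/3$ and the threshold $1-2\delta$ with $\delta = \min\{\eps/6,\tilde\delta/3\}$; you use the modulus for $\eps/2$ and threshold $1-\eta$ with $\eta = \min\{\eps/4,\delta_0/2\}$. The only issue is a small bookkeeping slip in the last line: Regime~1 yields $\|x+y\| \le 2-\eta = 2(1-\eta/2)$, so the final $\delta$ must satisfy $\delta \le \eta/2$ as well as $\delta \le \delta_0/2$; since $\eta \le \delta_0/2$, the correct choice is $\delta := \eta/2$, not $\min\{\delta_0/2,\eta\} = \delta_0/2$ (that equality is backwards, and $\delta_0/2$ does not cover Regime~1). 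This is a one-line fix and does not affect the validity of the argument's structure.
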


\begin{proof}
One direction is clear. To see the other, fix $\varepsilon\in(0,\infty)$ and suppose that $x,y\in Z$ satisfy $\|x\|,\|y\|\leq1$ and $\|x-y\|>\varepsilon$. Since $Z$ is uniformly convex, there is $\tilde{\delta}\in(0,\infty)$ associated to the choice of $\varepsilon/3$. Let $\delta:=\min\{\varepsilon/6,\tilde{\delta}/3\}$. If either $\|x\|\leq1-2\delta$ or $\|y\|\leq1-2\delta$, then  $\|x+y\|\leq2(1-\delta)$. If $\|x\|,\|y\|>1-2\delta$, then  $\tilde{x}:={x}/{\|x\|}$ and $\tilde{y}:={y}/{\|y\|}$ satisfy $\Vert x-\tilde{x}\Vert,\Vert y-\tilde{y}\Vert<2\delta$, and hence, $\|\tilde{x}-\tilde{y}\|>\eps-4\delta\geq \varepsilon/3$. 
Since $\Vert\tilde{x}\Vert=\Vert\tilde{y}\Vert=1$, uniform convexity yields
$\|\tilde{x}+\tilde{y}\|\leq2(1-\tilde{\delta})$ and hence, $\|x+y\|\leq\|x-\wx\|+\|\wx+\wy\|+\|
y-\wy\|\leq2(1-\delta)$. 
\end{proof}

A clever proof of the next result that avoids the use of Clarkson's inequalities can be found in
\cite[Proposition~2.4.19]{HKST}.
\begin{lemma}[Clarkson]
\label{Lp}
$L^p(X)$ is uniformly convex for $p\in(1,\infty)$.
\end{lemma}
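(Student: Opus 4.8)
The plan is to avoid Clarkson's inequalities and instead deduce uniform convexity of $L^p(X)$ from a scalar uniform-convexity estimate for the function $t\mapsto|t|^p$ combined with a splitting of the domain. The scalar estimate I would prove is the following: for every $p\in(1,\infty)$ and every $\eta\in(0,\infty)$ there exists $\delta=\delta(p,\eta)\in(0,1)$ such that
\begin{equation*}
\Bigl|\tfrac{a+b}{2}\Bigr|^p\le(1-\delta)\,\frac{|a|^p+|b|^p}{2}\qquad\text{for all }a,b\in\bbbr\text{ with }|a-b|^p\ge\eta\bigl(|a|^p+|b|^p\bigr).
\end{equation*}

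To prove this scalar claim, note that both sides are homogeneous of degree $p$ in $(a,b)$ and the hypothesis is scale-invariant, so it suffices to verify the inequality on the compact set $K:=\{(a,b)\in\bbbr^2:|a|^p+|b|^p=2\}$, and in fact only on its compact subset $K_\eta:=\{(a,b)\in K:|a-b|^p\ge2\eta\}$ (if $K_\eta=\emptyset$ there is nothing to prove). For $(a,b)\in K$ one has $|\tfrac{a+b}{2}|^p\le\bigl(\tfrac{|a|+|b|}{2}\bigr)^p\le\tfrac12(|a|^p+|b|^p)=1$, and equality here forces $a$ and $b$ to have the same sign and $|a|=|b|$, i.e.\ $a=b$, by strict convexity of $t\mapsto|t|^p$ for $p>1$. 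Since every point of $K_\eta$ satisfies $a\ne b$, the continuous function $(a,b)\mapsto|\tfrac{a+b}{2}|^p$ is everywhere $<1$ on $K_\eta$, so by compactness its maximum there equals $1-\delta$ for some $\delta>0$; this yields the claim.

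Next I would pass to $L^p(X)$. Fix $\eps\in(0,2)$ and $f,g\in L^p(X)$ with $\|f\|_{L^p(X)}=\|g\|_{L^p(X)}=1$ and $\|f-g\|_{L^p(X)}>\eps$. Put $\eta:=\eps^p/8$ and let $E:=\{x\in X:|f(x)-g(x)|^p>\eta(|f(x)|^p+|g(x)|^p)\}$. On $X\setminus E$ we have $\int_{X\setminus E}|f-g|^p\,d\mu\le\eta\int_X(|f|^p+|g|^p)\,d\mu=2\eta=\eps^p/4$, hence $\int_E|f-g|^p\,d\mu>\eps^p/2$; combined with the elementary bound $|f-g|^p\le2^{p-1}(|f|^p+|g|^p)$ this gives $\int_E(|f|^p+|g|^p)\,d\mu\ge2^{1-p}\int_E|f-g|^p\,d\mu>2^{-p}\eps^p$. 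Now use convexity, $|\tfrac{f+g}{2}|^p\le\tfrac12(|f|^p+|g|^p)$, on $X\setminus E$ and the scalar claim with $\delta=\delta(p,\eta)$ on $E$; integrating over $X$,
\begin{equation*}
\Bigl\|\tfrac{f+g}{2}\Bigr\|_{L^p(X)}^p\le\frac{\|f\|_{L^p(X)}^p+\|g\|_{L^p(X)}^p}{2}-\frac{\delta}{2}\int_E(|f|^p+|g|^p)\,d\mu\le1-\delta\,2^{-p-1}\eps^p,
\end{equation*}
so $\|f+g\|_{L^p(X)}\le2\bigl(1-\delta\,2^{-p-1}\eps^p\bigr)^{1/p}\le2(1-\delta')$ for some $\delta'=\delta'(p,\eps)\in(0,1)$, which is precisely the defining inequality for uniform convexity.

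The only genuinely delicate step is the scalar claim: the uniform convexity of $t\mapsto|t|^p$ on $\bbbr$ degenerates exactly along the diagonal $a=b$, where the gain $\delta$ disappears, so one cannot simply integrate a pointwise bound — the role of the set $E$ is to isolate the part of the domain on which $f$ and $g$ differ by a definite amount relative to their size, which is where a uniform gain is available. Everything else is convexity estimates and elementary bookkeeping; in particular no duality, reflexivity, or Clarkson-type inequality is used.
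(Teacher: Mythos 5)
Your argument is correct. On the scalar claim: homogeneity reduces matters to the compact superellipse $K=\{|a|^p+|b|^p=2\}$, the hypothesis then becomes $|a-b|^p\ge 2\eta$, and since strict convexity of $t\mapsto|t|^p$ forces $\bigl|\tfrac{a+b}{2}\bigr|^p<1$ off the diagonal, compactness of $K_\eta$ gives the uniform gap $\delta>0$; the passage to $L^p(X)$ via the good/bad split $E$ and the elementary bounds is also sound (the only quantitative ingredients are $|f-g|^p\le 2^{p-1}(|f|^p+|g|^p)$ and the choice $\eta=\eps^p/8$, both of which check out). Note, however, that the paper does not give its own proof of this lemma at all — it simply cites \cite[Proposition~2.4.19]{HKST} with the remark that that proof ``avoids the use of Clarkson's inequalities'' — so your write-up is a genuinely self-contained alternative rather than a reconstruction of an argument in the paper. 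What the paper's reference and your argument share is the desire to avoid Clarkson/Hanner; what distinguishes yours is the use of a soft compactness step in two real variables, which has the advantage of being completely elementary and short but the disadvantage of giving no explicit modulus of convexity $\delta=\delta(p,\eps)$. For the way the lemma is used here (only to feed the Milman--Pettis theorem and the uniform-convexity transfer to $N^{1,p}$, where qualitative uniform convexity suffices), that loss of quantitative information is harmless, so your route is a perfectly acceptable substitute for the cited reference.
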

For a proof of the following theorem, see, e.g., \cite[Theorem~2.4.9]{HKST}.
\begin{lemma}[Milman--Pettis' theorem]
\label{milmanpettis}
Every uniformly convex Banach space is reflexive.
\end{lemma}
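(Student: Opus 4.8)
The final statement to prove is the Milman--Pettis theorem: every uniformly convex Banach space is reflexive. Since the paper explicitly points to \cite[Theorem~2.4.9]{HKST} for a proof, this is a ``Lemma'' of technical character, so I would keep the proof reasonably self-contained but standard. Here is my plan.

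\textbf{Setup and strategy.} Let $(Z,\|\cdot\|)$ be a uniformly convex Banach space, and let $J\colon Z\to Z^{**}$ be the canonical isometric embedding. Reflexivity means $J$ is surjective. Fix $z^{**}\in Z^{**}$ with $\|z^{**}\|=1$; the goal is to produce $z\in Z$ with $Jz=z^{**}$. The image $J(B_Z)$ of the closed unit ball is a closed convex subset of $B_{Z^{**}}$ (closed because $J$ is an isometry onto a Banach space, hence its image is complete, hence closed). By Goldstine's theorem, $J(B_Z)$ is weak\textsuperscript{*}-dense in $B_{Z^{**}}$. So I would fix a norming functional: by Hahn--Banach choose $f\in Z^*$ with $\|f\|=1$ and $z^{**}(f)>1-\tfrac{\delta}{2}$, where $\delta=\delta(\eps)$ is to be chosen. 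Then consider the weak\textsuperscript{*}-neighborhood $V=\{w^{**}\in Z^{**}:\ |w^{**}(f)-z^{**}(f)|<\tfrac{\delta}{2}\}$ of $z^{**}$; by Goldstine there exists $x\in B_Z$ with $Jx\in V$, i.e. $f(x)>1-\delta$.

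\textbf{The uniform convexity argument.} The heart of the proof is to show that $z^{**}$ lies at distance $<\eps$ from $J(B_Z)$ for every $\eps>0$; combined with closedness of $J(B_Z)$ this forces $z^{**}\in J(B_Z)$, and in particular $z^{**}=Jz$ for some $z$ with $\|z\|\le 1$, which gives surjectivity of $J$ on the unit ball and hence everywhere. To get distance $<\eps$: suppose for contradiction that $z^{**}\notin Jx+\eps B_{Z^{**}}$. The set $W=V\cap(Z^{**}\setminus(Jx+\eps\,\mathrm{int}\,B_{Z^{**}}))$... actually a cleaner route: I would argue that if $x,y\in B_Z$ both satisfy $f(x)>1-\delta$ and $f(y)>1-\delta$, then $\|x+y\|\ge f(x+y)>2(1-\delta)$, so by uniform convexity (in the form of Lemma~\ref{unifconvex-clsd}, contrapositive) we must have $\|x-y\|\le\eps$. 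Thus the set $S_\delta:=\{x\in B_Z:\ f(x)>1-\delta\}$ has diameter at most $\eps$. Now pick any such $x\in S_\delta$ (exists by Goldstine as above). I claim $\|z^{**}-Jx\|\le\eps$: if not, the weak\textsuperscript{*}-open set $V\cap(Z^{**}\setminus\overline{Jx+\eps B_{Z^{**}}}^{\,w^*})$ is a weak\textsuperscript{*}-neighborhood of $z^{**}$ — here I need that $\overline{Jx+\eps B_{Z^{**}}}$ is weak\textsuperscript{*}-closed, which holds because $\eps B_{Z^{**}}$ is weak\textsuperscript{*}-compact (Banach--Alaoglu) hence weak\textsuperscript{*}-closed, and translation by $Jx$ is a weak\textsuperscript{*}-homeomorphism — so by Goldstine it contains some $Jy$ with $y\in B_Z$; then $y\in S_\delta$ too (since $Jy\in V$ means $f(y)>1-\delta$) but $\|x-y\|=\|Jx-Jy\|>\eps$, contradicting $\operatorname{diam}S_\delta\le\eps$. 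Hence $\|z^{**}-Jx\|\le\eps$, and letting $\eps\to0$ with $x=x_\eps\in B_Z$ and using completeness/closedness of $J(B_Z)$ yields $z^{**}\in J(B_Z)$.

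\textbf{The main obstacle.} The only genuinely delicate point is the interplay between the weak\textsuperscript{*} topology and the norm on $Z^{**}$: Goldstine's theorem only gives weak\textsuperscript{*}-density, not norm-density, so I cannot directly approximate $z^{**}$ in norm by elements of $J(B_Z)$. The trick above circumvents this by using a single scalar functional $f$ to cut out a small-diameter slice $S_\delta$ of $B_Z$, and then transferring the norm estimate ``$\operatorname{diam}S_\delta\le\eps$'' into an estimate on $\|z^{**}-Jx\|$ via a weak\textsuperscript{*}-neighborhood argument that relies on Banach--Alaoglu to ensure $Jx+\eps B_{Z^{**}}$ is weak\textsuperscript{*}-closed. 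I should state clearly which ambient facts I invoke: Hahn--Banach, Goldstine's theorem, and Banach--Alaoglu. An alternative, slicker packaging records $\operatorname{diam}S_\delta\le\eps$ directly and then notes that the nested intersection $\bigcap_{\eps>0}\overline{S_\delta}^{\,w^*}$ in $B_{Z^{**}}$, each set of which contains $z^{**}$ by Goldstine and has weak\textsuperscript{*}-diameter... — but this requires care about "diameter" in the weak\textsuperscript{*} closure, so I would stick with the contradiction argument above, which is the standard and cleanest presentation. Given that the excerpt already cites \cite[Theorem~2.4.9]{HKST}, it would also be entirely acceptable to simply cite it; but if a proof is wanted, the argument sketched here is the one I would write out.
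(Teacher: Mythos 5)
The paper does not include a proof of this lemma; it simply attributes it to Milman--Pettis and cites \cite[Theorem~2.4.9]{HKST}. Your sketch is the standard ``slice'' proof (fix a near-norming $f\in Z^*$, observe via uniform convexity that the slice $\{x\in B_Z: f(x)>1-\delta\}$ has diameter $\le\eps$, and transfer this to $\|z^{**}-Jx\|\le\eps$ using Goldstine together with the weak\textsuperscript{*}-closedness of $Jx+\eps B_{Z^{**}}$ from Banach--Alaoglu), and it is correct; in particular the use of the paper's Lemma~\ref{unifconvex-clsd} as the contrapositive form of uniform convexity with $\|x\|,\|y\|\le 1$ rather than $=1$ is exactly the right variant to invoke. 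One small cosmetic point: obtaining the norming $f$ with $\|f\|=1$ and $z^{**}(f)>1-\delta/2$ is just the definition of the bidual norm as a supremum, not Hahn--Banach; otherwise the argument and the ambient facts (Goldstine, Banach--Alaoglu, completeness of $J(B_Z)$) are all invoked correctly.
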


By $\ell^p_M$ we will denote $\bbbr^M$ with the norm $|x|_p:=\big(\sum_{j=1}^M|x_j|^p\big)^{1/p}$, where $x=(x_1,\ldots,x_M)$, and so $L^p(X,\ell^p_M)$ is a Banach space equipped with the norm
\begin{equation}
\label{reoi}
\Phi(f):=\Bigg(\sum_{j=1}^M\|f_j\|_{L^p(X)}^p\Bigg)^{1/p}, 
\quad
\text{where}
\quad
f=(f_1,\ldots,f_M).
\end{equation}
\begin{corollary}
\label{clarkson}
If $p\in(1,\infty)$ and $M\in\bbbn$, then $L^p(X,\ell^p_M)$ is uniformly convex.
\end{corollary}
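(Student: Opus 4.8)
The plan is to recognize $(L^p(X,\ell^p_M),\Phi)$ as an $L^p$-space over a single measure space, so that uniform convexity follows at once from Lemma~\ref{Lp}. Let $Y:=X\times\{1,\dots,M\}$, regarded as the disjoint union of $M$ copies of $X$, equipped with the measure $\nu$ that agrees with $\mu$ on each copy (equivalently, the product of $\mu$ with the counting measure on $\{1,\dots,M\}$). To each $f=(f_1,\dots,f_M)\in L^p(X,\ell^p_M)$ I would associate the function $\Lambda f\colon Y\to\mathbb{R}$ given by $(\Lambda f)(x,j):=f_j(x)$.

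First I would check that $\Lambda$ is a linear bijection of $L^p(X,\ell^p_M)$ onto $L^p(Y,\nu)$ and an isometry: since $Y$ is a finite disjoint union of copies of $X$,
$$
\|\Lambda f\|_{L^p(Y,\nu)}^p=\int_Y|\Lambda f|^p\,d\nu=\sum_{j=1}^M\int_X|f_j|^p\,d\mu=\sum_{j=1}^M\|f_j\|_{L^p(X)}^p=\Phi(f)^p,
$$
while injectivity and surjectivity are immediate from the formula. Second, I would use the fact that $L^p$ over an arbitrary measure space is uniformly convex when $p\in(1,\infty)$: although Lemma~\ref{Lp} is stated for $L^p(X)$, the proof it cites, \cite[Proposition~2.4.19]{HKST}, uses nothing about $X$ beyond the measure-space structure, and therefore applies verbatim to $L^p(Y,\nu)$. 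Since uniform convexity is manifestly preserved under surjective linear isometries, $(L^p(X,\ell^p_M),\Phi)$ is uniformly convex, as claimed.

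I do not expect a genuine obstacle. The only point worth a remark is the passage from $L^p(X)$ to $L^p$ over the auxiliary space $Y$ — that is, the (entirely standard) fact that uniform convexity of $L^p$ holds over any measure space, not merely over the specific metric-measure space $X$; this is implicit in the proof referenced for Lemma~\ref{Lp}. For a reader who prefers a self-contained coordinatewise derivation, the range $p\in[2,\infty)$ is immediate by applying the first Clarkson inequality in each of the $M$ factors $L^p(X)$ and summing, which gives $\Phi(f+g)^p+\Phi(f-g)^p\le 2^{p-1}\bigl(\Phi(f)^p+\Phi(g)^p\bigr)$ and hence uniform convexity by the usual deduction; for $p\in(1,2)$ it is cleanest to appeal to the identification with $L^p(Y,\nu)$ directly. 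I would therefore present the isometric-identification proof, which is short and uniform in $p$.
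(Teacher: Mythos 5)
Your proof is correct and takes essentially the same approach as the paper: you identify $L^p(X,\ell^p_M)$ isometrically with $L^p$ over the disjoint union $X\times\{1,\dots,M\}$ and then invoke uniform convexity of $L^p$ (Lemma~\ref{Lp}), which is exactly the paper's one-line argument, merely spelled out in more detail.
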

\begin{proof}
Since $L^p(X,\ell^p_M)$ is isometric to $L^p(X_M)$, where 
$$
X_M=X\sqcup X\sqcup\ldots\sqcup X=X\times\{1,2,\ldots,M\}
$$ 
is the disjoint union of $M$ copies of the measure space $(X,\mu)$, the result follows from Lemma~\ref{Lp}.
\end{proof}

\subsection{Dunford-Pettis theorem}

Recall that a family of $\mu$-measurable functions $\mathcal{F}$ is said to be \emph{equi-integrable} if for every $\eps\in(0,\infty)$ there exists $\delta\in(0,\infty)$ such that for every $\mu$-measurable set $S \subseteq X$ with $\mu(S) < \delta$ we have
\begin{equation}
\label{it:equiint2}
\sup_{f\in\mathcal{F}}\int_S |f|\,d\mu < \eps.
\end{equation}

The proof for the following version of the Dunford--Pettis theorem can be found in, e.g., \cite[Theorem~2.54]{FL07}.

\begin{lemma}[Dunford--Pettis' theorem]
\label{thm:DunfordPettis}
Let $\mathcal{F}\subseteq L^1(X)$. 
Then every sequence in $\mathcal{F}$ has a subsequence that is weakly convergent in $L^1(X)$ if and only if
the following two conditions are satisfied:
\begin{enumerate}
\item[{\rm(a)}] $\mathcal{F}$ is bounded in $L^1(X)$ and equi-integrable;
\vskip.08in
\item[{\rm (b)}] for every $\eps\in(0,\infty)$ one can find a $\mu$-measurable set $E\subseteq X$ such that $\mu(E)<\infty$ and
\begin{equation}
\label{it:equiint1}
\sup_{f\in\mathcal{F}}\,\int_{X\setminus E} |f|\,d\mu < \eps.
\end{equation}
\end{enumerate}
\end{lemma}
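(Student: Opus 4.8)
The plan is to prove the two implications separately; the substantive one is that (a) and (b) imply weak sequential compactness, and there the strategy is to reduce matters to the weak sequential compactness of bounded subsets of the Hilbert space $L^2$. So, assuming (a) and (b), let $(f_n)_n\subseteq\mathcal F$ and put $M:=\sup_n\|f_n\|_{L^1(X)}<\infty$. First I would use (b) to choose measurable sets $E_1\subseteq E_2\subseteq\cdots$ with $\mu(E_m)<\infty$ and $\sup_n\int_{X\setminus E_m}|f_n|\,d\mu<1/m$. With $T_t(s):=\max\{-t,\min\{t,s\}\}$ denoting truncation at level $t$, set $f_n^{(m)}:=T_m(f_n)\,\chi_{E_m}$; this function is bounded by $m$ and supported on a set of finite measure, hence lies in $L^2(X)$. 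For each fixed $m$ the bounded sequence $(f_n^{(m)})_n$ in the Hilbert space $L^2(X)$ has a weakly convergent subsequence, and since these functions vanish off $E_m$ (so that $\phi\,\chi_{E_m}\in L^2(X)$ for every $\phi\in L^\infty(X)$) this is simultaneously weak convergence in $L^1(X)$. A diagonal extraction over $m\in\N$ then yields a single subsequence, still written $(f_n)_n$, with $f_n^{(m)}\rightharpoonup h_m$ in $L^1(X)$ for every $m$.

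Next, since $\mu(\{|f_n|>m\})\le M/m$ uniformly in $n$, equi-integrability forces $\sup_n\int_{\{|f_n|>m\}}|f_n|\,d\mu\to 0$ as $m\to\infty$, and together with the choice of $E_m$ this gives $\eta_m:=\sup_n\|f_n-f_n^{(m)}\|_{L^1(X)}\to 0$. By weak lower semicontinuity of the $L^1$-norm, $\|h_m-h_{m'}\|_{L^1(X)}\le\eta_m+\eta_{m'}$, so $(h_m)_m$ is Cauchy in $L^1(X)$ with some limit $g$. Finally, for $\phi\in L^\infty(X)$ I would decompose $\int_X(f_n-g)\phi\,d\mu$ as $\int_X(f_n-f_n^{(m)})\phi\,d\mu+\int_X(f_n^{(m)}-h_m)\phi\,d\mu+\int_X(h_m-g)\phi\,d\mu$, bound the two outer terms by $\|\phi\|_{L^\infty(X)}(\eta_m+\|h_m-g\|_{L^1(X)})$, let $n\to\infty$ so the middle term vanishes, and then let $m\to\infty$; this shows $f_n\rightharpoonup g$ in $L^1(X)$ and completes the sufficiency direction.

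For necessity, assume every sequence in $\mathcal F$ has a weakly convergent subsequence. If $\mathcal F$ were unbounded, a sequence with $\|f_n\|_{L^1(X)}\to\infty$ could have no norm-bounded, hence no weakly convergent, subsequence (weakly convergent sequences are bounded, by the uniform boundedness principle), so $\mathcal F$ is bounded. For the remaining two claims I would appeal to the classical fact that a weakly convergent sequence $(u_j)_j$ in $L^1(X)$ is automatically equi-integrable and satisfies (b): the finite measures $u_j\,d\mu$ converge setwise, so by the Vitali--Hahn--Saks and Nikodym convergence theorems the family $\{u_j\,d\mu\}_j$ is uniformly absolutely continuous with respect to $\mu$ and uniformly countably additive, all of this taking place on the $\sigma$-finite set carrying the mass of the $u_j$ and their limit. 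Granting this, if $\mathcal F$ were not equi-integrable one would extract from a ``bad'' sequence with $\int_{S_n}|f_n|\,d\mu\ge\eps_0$ and $\mu(S_n)\to0$ a weakly convergent subsequence, contradicting equi-integrability of that subsequence; and if (b) failed one would construct recursively $f_n\in\mathcal F$ and finite-measure sets $A_0\subseteq A_1\subseteq\cdots$ with $\int_{X\setminus A_{n-1}}|f_n|\,d\mu>\eps_0$ and $\int_{X\setminus A_n}|f_n|\,d\mu$ small, then extract a weakly convergent subsequence and use that it satisfies (b) as a sequence---together with the equi-integrability just established, which absorbs the part of $f_n$ lying in $A_\infty\setminus A_n$---to reach a contradiction.

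I expect the main obstacle to lie entirely in the necessity direction: the passage from weak convergence in $L^1$ to equi-integrability and tightness is the only genuinely hard point, and it rests either on a Baire-category argument (the Vitali--Hahn--Saks and Nikodym theorems) or, if one insists on a self-contained treatment, on a somewhat delicate ``gliding hump'' construction with careful bookkeeping of the sets involved. The sufficiency direction, by contrast, is elementary once one grants weak sequential compactness of bounded sets in a Hilbert space; the only mild subtlety there is that $X$ need not be $\sigma$-finite, which is precisely the eventuality that condition (b) is designed to control.
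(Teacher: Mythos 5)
The paper does not prove this lemma at all; it simply cites \cite[Theorem~2.54]{FL07}, so there is no ``paper's proof'' to compare against. What you have written is a self-contained argument, and it is essentially sound.

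Your sufficiency direction is correct and cleanly organized. The reduction to $L^2$ by truncation and restriction to finite-measure sets, diagonalization in $m$, the estimate $\eta_m\to0$ via Chebyshev plus equi-integrability plus condition (b), the Cauchy property of $(h_m)$ from weak lower semicontinuity of the $L^1$-norm, and the final three-term decomposition all fit together without gaps. The only remark worth making is that when you pass from weak $L^2$ convergence to weak $L^1$ convergence of $f_n^{(m)}$ you are implicitly using that $\int f_n^{(m)}\phi\,d\mu = \int f_n^{(m)}(\phi\chi_{E_m})\,d\mu$ with $\phi\chi_{E_m}\in L^2$; you state this, and it is correct. This is in the same spirit as the Fonseca--Leoni treatment, which also reduces to weak compactness in a reflexive $L^p$ after truncation.

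Your necessity direction is sketched rather than proved, and you are candid about that. Boundedness via the uniform boundedness principle is fine. The passage from ``every sequence has a weakly convergent subsequence'' to equi-integrability and condition (b) is, as you say, where the real work lies: you need that a weakly convergent sequence in $L^1$ is itself equi-integrable and tight, which is Vitali--Hahn--Saks/Nikod\'ym on the $\sigma$-finite set carrying the masses. The contradiction argument for (b) requires a bit more bookkeeping than your summary suggests: after extracting a weakly convergent subsequence $(f_{n_k})$, the annuli $A_{n_k}\setminus A_{n_k-1}$ are pairwise disjoint (because $n_k-1\ge n_{k-1}$), each carries $f_{n_k}$-mass at least $\eps_0$ minus a small tail, and a finite-measure set $E$ from condition (b) for the subsequence can meet only a vanishing amount of each annulus as $k\to\infty$, which together with equi-integrability of the subsequence gives the contradiction. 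This is exactly the ``gliding hump'' you allude to; it works, but as written it would need to be spelled out. Overall: correct strategy, sufficiency fully proved, necessity correctly identified as the hard half and left at the level of a plan.
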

\begin{remark}
Observe that whenever $\mu(X) < \infty$,  condition (b) is trivially satisfied by setting $E = X$, which is why it is omitted in literature that discusses the Dunford--Pettis theorem over spaces of finite measure.
\end{remark}

\section{The main result}
\label{uc}

In this section we will carefully record all of the notation and technical lemmata used in the poof of the main result, Theorem~\ref{main}, and then we will prove it.  The reader is reminded that we are always assuming $(X,d,\mu)$ is a metric-measure space, where $\mu$ is a Borel regular doubling measure. However, unless explicitly stated, we do not assume that the space supports a $p$-Poincar\'e inequality.

\subsection{Notation} 
For each $k\in\bbbz$, let $\{B_i\}_{i=1}^{M(k)}$, $M(k)\in\mathbb{N}\cup\{\infty\}$, be a covering of $X$ by balls of radius $2^{-k}$ such that the balls in the family  $\{\frac{1}{5}B_i\}_{i=1}^{M(k)}$ are pairwise disjoint. The existence of such coverings follows from the familiar $5r$-covering lemma.

The doubling property of the measure $\mu$ implies that for each fixed $\theta\in[1,\infty)$, the family $\{\theta B_i\}_{i=1}^{M(k)}$ of enlarged balls has  bounded overlapping, in the sense that there exists a constant $C_0\in[1,\infty)$ such that $\sum_i\chi_{\theta B_i}(x)\leq C_0$ for every $x\in X$. 
Note that $C_0$ depends only on $\theta$ and $C_d$ (the doubling constant of $\mu$). In particular, $C_0$ is independent of $k$. 

For each $k\in\bbbz$, we have a different family of balls (referred to as \textit{balls of generation $k$}) and we will write $B^k_i:=B_i$ if we wish to stress for which $k\in\bbbz$ the family was constructed.

We say that  balls $B^k_i$ and $B^k_j$ are \textit{neighbors} if $\operatorname{dist}(B^k_i,B^k_j)<2^{-k}$, and we will write $B^k_i\sim B^k_j$ in this case. Note that there exists $N\in\bbbn$, such that each ball has at most $N$ neighbors, where $N$ depends only on the doubling constant of $\mu$ and, in particular, is independent of $k$. 

If $B_{i,1},\dots,B_{i,{n_i}}$, $n_i< N$ are \textit{all} of the neighbors of $B_i$ then we set
$$
B_{i,{n_i+1}},\dots,B_{i,{N}}:=B_i.
$$
That is, we set the last $N-n_i$ balls in the sequence $\{B_{i,j}\}_{j=1}^N$ to be identical copies of $B_i$. While this construction is somewhat formal, for reasons that will be clear later, we need to have the same number of balls ``around'' each of the $B_i$'s. 

Let $A_1:=B_1$ and $A_i:=B_i\setminus(B_1\cup\cdots\cup B_{i-1})$ for each $i\geq2$. Then $X=\bigcup_{i=1}^{M(k)}A_i$, and so, in particular, $\{A_i\}_{i=1}^{M(k)}$ is a partition of $X$ into pairwise disjoint sets. For each $x\in X$, there is a unique $i$ such that $x\in A_i$. In other words, $i$ is the smallest index such that $x\in B_i$. As such, we define $B[x]:=B_i$ and set $B[x,j]:=B_{i,j}$ for $j\in[1,N]$. In particular, $B[x,j]=B[x]$ if $j\in(n_i,N]$.

For $u\in L^1_\loc(X)$ and $k\in\bbbz$, we define
\begin{equation}
\label{ukdef}
S_ku:=
\sum_{i=1}^{M(k)}u_{B_i}\,\chi_{A_i},
\end{equation}
and note that  $S_ku(x)=u_{B[x]}$ for each $x\in X$. According to Lebesgue's differentiation theorem (Lemma~\ref{T5}), $S_ku\to u$ pointwise $\mu$-a.e. in $X$ as $k\to\infty$. 

For $u\in L^1_\loc(X)$ and $k\in\bbbz$, we also define
\begin{equation*}
\label{Tkdef}
T_ku(x):=2^k\big[u_{B[x]}-u_{B[x,1]},\dots,u_{B[x]}-u_{B[x,N]}\big]\in\bbbr^N,
\end{equation*}
for $x\in X$, or equivalently,
$$
T_ku:=2^k\sum_{i=1}^{M(k)}\big[u_{B_i}-u_{B_{i,1}},\dots,u_{B_i}-u_{B_{i,N}}\big]\chi_{A_i}.
$$
Observe that if $x\in A_i$ and $n_i<N$ (the actual number of neighbors of $B[x]=B_i$), then the vector $T_ku(x)\in\bbbr^N$ has zeros in the last $N-n_i$ components, i.e.,
$$
T_ku(x):=2^k\big[u_{B_i}-u_{B_{i,1}},\dots,u_{B_i}-u_{B_{i,{n_i}}},0,\dots,0\big].
$$
Equipping $\bbbr^N$ with the $\ell^p_N$ norm, $p\in[1,\infty)$, we have
\begin{equation}
\label{Tkdef2}
|T_ku(x)|_p=2^k\Bigg(\sum_{j=1}^N|u_{B[x]}-u_{B[x,j]}|^p\Bigg)^{1/p}=2^k\sum_{i=1}^{M(k)}\Bigg(\sum_{j=1}^N|u_{B_i}-u_{B_{i,j}}|^p\Bigg)^{1/p}\chi_{A_i}(x).
\end{equation}
In particular,
$$
|T_ku(x)|:=|T_ku(x)|_1=2^k\sum_{j=1}^N|u_{B[x]}-u_{B[x,j]}|=2^k\sum_{i=1}^{M(k)}\sum_{j=1}^N|u_{B_i}-u_{B_{i,j}}|\,\chi_{A_i}(x).
$$
Clearly the norms $|\cdot|$ and $|\cdot|_p$ are equivalent on $\bbbr^N$, but we will have to work with the norm $|\cdot|_p$ in order to prove uniform convexity of $N^{1,p}$. Note that when $\bbbr^N$ is equipped with $|\cdot|_p$, the $L^p(X,\ell^p_N)$ norm of $T_ku$ is
$$
\big\Vert|T_ku|_p\big\Vert_{L^p(X)}=2^k\Bigg(\sum_{i=1}^{M(k)}\sum_{j=1}^N\Big(|u_{B_i}-u_{B_{i,j}}|\mu(A_i)^{1/p}\Big)^p\Bigg)^{1/p},
$$
where we have used the fact that the $A_i$'s are pairwise disjoint. 

Fix $p \in [1, \infty)$ and define  
\begin{equation}
\label{newnorm}
\Vert u\Vert_{1,p}^* :=\Big(\Vert u\Vert_{L^p(X)}^p+\limsup_{k\to\infty} \big\Vert|T_{k} u|_p\big\Vert_{L^p(X)}^p\Big)^{1/p} \quad\mbox{for each $u \in  N^{1,p}(X)$.}
\end{equation}

Observe that components of $T_ku(x)$ are averaged difference quotients of $u$ in all possible directions, i.e., over all balls that are neighbours of $B[x]$. As we shall see, in some sense $|T_ku|$ (or $|T_ku|_p$) is an approximation of the minimal $p$-weak upper gradient of $u$ (see Lemma~\ref{almostug}, Proposition~\ref{gradcomparable}, and Theorem~\ref{ptwisethm}).

\subsection{Auxiliary results}
In this subsection we will prove technical lemmata which will be needed in the proof of the main result (Theorem~\ref{main}).

\begin{lemma}
\label{TkBDD}
Suppose that the space supports a $p$-Poincar\'e inequality \eqref{poincare} for some $p\in[1,\infty)$. Then there exists a constant $C=C(p,C_d,c_{\PI})>0$ such that 
if $u\in L^1_{\rm loc}(X)$ is Borel measurable and $g$ is a $p$-weak upper gradient of $u$ that is finite $\mu$-a.e., then for each $k\in\bbbz$, we have
\begin{equation}
\label{tkpwest}
|T_ku(x)|_p\leq 
C\Bigg(\,\mvint_{5\lambda B[x]} g^p\,d\mu\Bigg)^{1/p}
\quad\mbox{for all $x\in X$.}
\end{equation}
Therefore, there is a constant $C'=C'(p,C_d,C_{\PI},\lambda)>0$ such that
\begin{equation}
\label{tkpwest-2}
\big\Vert|T_ku|_p\big\Vert_{L^p(X)}\leq C'\Vert g\Vert_{L^p(X)}.
\end{equation}
Consequently, if $u\in N^{1,p}(X)$, then the sequence $\{T_ku\}_{k\in\bbbz}$ is bounded in $L^p(X,\ell^p_N)$.
\end{lemma}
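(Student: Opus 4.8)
The plan is to prove the pointwise bound \eqref{tkpwest} first, and then derive the two $L^p$ statements by integration. Fix $k\in\bbbz$ and $x\in X$, and let $B_i = B[x]$, so that $B[x,j] = B_{i,j}$ for $j = 1,\dots,N$. The key observation is that for each neighbor $B_{i,j}$ of $B_i$, both $B_i$ and $B_{i,j}$ have radius $2^{-k}$ and satisfy $\operatorname{dist}(B_i, B_{i,j}) < 2^{-k}$, so both are contained in a common ball $\tilde B$ concentric with $B_i$ of radius comparable to $2^{-k}$; concretely $\tilde B = 5 B_i$ works (centers are within $3\cdot 2^{-k}$ and radius $2^{-k}$, so $B_{i,j}\subseteq B(\text{center of }B_i, 5\cdot 2^{-k})$), and $\lambda\tilde B = 5\lambda B_i$. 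I would estimate
$$
|u_{B_i} - u_{B_{i,j}}| \le |u_{B_i} - u_{\tilde B}| + |u_{\tilde B} - u_{B_{i,j}}|,
$$
and bound each of these two terms separately. For the first term, since $B_i\subseteq\tilde B$ and $\mu$ is doubling (so $\mu(\tilde B)\le C_d^c\,\mu(B_i)$ for a fixed power depending only on the ratio of radii), one has
$$
|u_{B_i} - u_{\tilde B}| \le \mvint_{B_i}|u - u_{\tilde B}|\,d\mu \le \frac{\mu(\tilde B)}{\mu(B_i)}\,\mvint_{\tilde B}|u - u_{\tilde B}|\,d\mu \lesssim \operatorname{diam}(\tilde B)\Bigl(\,\mvint_{\lambda\tilde B} g^p\,d\mu\Bigr)^{1/p}
$$
by the $p$-Poincaré inequality in the $p$-weak form, Lemma~\ref{pwkpoin}. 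The same estimate applies verbatim to $|u_{\tilde B} - u_{B_{i,j}}|$ since $B_{i,j}\subseteq\tilde B$ as well. Using $\operatorname{diam}(\tilde B)\le 10\cdot 2^{-k}$ absorbs the factor $2^k$ in the definition of $T_k u$, and raising to the $p$-th power and summing the at-most-$N$ terms (the extra copies $B_{i,j}=B_i$ for $j>n_i$ contribute zero) gives \eqref{tkpwest} with $C$ depending only on $p$, $C_d$, and $c_{\PI}$.

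For \eqref{tkpwest-2} I would integrate the $p$-th power of \eqref{tkpwest} over $X$. Using the partition $X=\bigcup_i A_i$ with $A_i\subseteq B_i$ disjoint, and the fact that $5\lambda B[x] = 5\lambda B_i$ is constant on $A_i$,
$$
\big\Vert|T_k u|_p\big\Vert_{L^p(X)}^p = \sum_i \int_{A_i} |T_k u|_p^p\,d\mu \le C^p\sum_i \frac{\mu(A_i)}{\mu(5\lambda B_i)}\int_{5\lambda B_i} g^p\,d\mu \le C^p\sum_i \int_{5\lambda B_i} g^p\,d\mu,
$$
and since $\{5\lambda B_i\}_i$ has bounded overlap by a constant $C_0=C_0(C_d,\lambda)$ (the property recorded when the coverings were constructed), $\sum_i \chi_{5\lambda B_i}\le C_0$, so the right side is at most $C^p C_0\,\Vert g\Vert_{L^p(X)}^p$. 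This gives \eqref{tkpwest-2} with $C' = (C^p C_0)^{1/p}$ depending on $p$, $C_d$, $c_{\PI}$, $\lambda$. Finally, if $u\in N^{1,p}(X)$, apply \eqref{tkpwest-2} with $g$ the minimal $p$-weak upper gradient $g_u\in L^p(X)$ (finite $\mu$-a.e.) to conclude $\big\Vert|T_k u|_p\big\Vert_{L^p(X)}\le C'\Vert g_u\Vert_{L^p(X)}$ uniformly in $k$; recalling that $L^p(X,\ell^p_N)$-norm of $T_k u$ is exactly $\big\Vert|T_k u|_p\big\Vert_{L^p(X)}$, the sequence $\{T_k u\}_{k\in\bbbz}$ is bounded in $L^p(X,\ell^p_N)$.

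The only place requiring genuine care is getting the geometry right in the first step: checking that a fixed enlargement ($5B_i$) of $B[x]$ contains all of its neighbors together with $B[x]$ itself, so that a single application of the Poincaré inequality on a controlled ball handles every component of $T_k u(x)$, and confirming that all the doubling constants and the enlargement factor $\lambda$ enter only through constants independent of $k$. Everything else is a routine application of Lemma~\ref{pwkpoin}, the doubling inequality, and the bounded-overlap property, none of which depends on $k$. One should also note that the bound holds for \emph{all} $x\in X$ (not merely a.e.), as claimed, because the argument is entirely pointwise once $g$ is fixed; the only a.e.\ restriction is the finiteness of $g$, which is assumed.
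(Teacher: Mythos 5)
Your proposal is correct and follows essentially the same route as the paper: compare both $u_{B[x]}$ and $u_{B[x,j]}$ to $u_{5B[x]}$, bound the resulting oscillation integral via Lemma~\ref{pwkpoin} applied on the ball $5B[x]$ (using doubling to control the measure ratios), then integrate over the disjoint sets $A_i\subseteq B_i$ and invoke bounded overlap of $\{5\lambda B_i\}_i$. The geometric checks you flag (neighbours of $B[x]$ sit inside $5B[x]$, and all constants are uniform in $k$) are exactly the points the paper verifies.
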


\begin{proof}
Fix $k\in\bbbz$ and $x\in X$, along with $j\in[1,N]$. Since $B[x]$ and $B[x,j]$ are neighbors (by definition) we have that $B[x,j]\subseteq 5 B[x]\subseteq 10 B[x,j]$. Therefore, the doubling condition of $\mu$ implies that $\mu(B[x,j]) \approx \mu(5B[x])$.
Applying Lemma~\ref{pwkpoin} to the pair
$(u,g)$ we can estimate
\begin{equation*}
\begin{split}
|u_{B[x]} - u_{B[x,j]}| 
&\leq
|u_{B[x]} - u_{5B[x]}|+|u_{5B[x]} - u_{B[x,j]}|\\
&\lesssim 
\mvint_{5B[x]} |u-u_{5B[x]}|\,d\mu
\leq C 2^{-k} \Bigg(\,\,\mvint_{5\lambda B[x]} g^p\,d\mu \Bigg)^{1/p},
\end{split}
\end{equation*}
where $C\in(0,\infty)$ depends only on $C_d$ and $c_{\PI}$. From the formula for $|T_ku|_p$, we have
\begin{equation}
\label{eq5}
|T_ku(x)|_p
\leq 2^{k}\Bigg(\sum_{j=1}^NC^p2^{-kp}\mvint_{5\lambda B[x]} g^p\,d\mu\Bigg)^{1/p}
=C \cdot N^{1/p}\Bigg(\,\mvint_{5\lambda B[x]} g^p\,d\mu\Bigg)^{1/p},
\end{equation}
where $C$ and $N$ only depend on $C_d$ and $c_{\PI}$.
This proves \eqref{tkpwest}.

Turning our attention to proving \eqref{tkpwest-2}, observe that estimate \eqref{eq5} is equivalent to
\begin{equation}
\label{eq6}
|T_ku(x)|_p\leq 
C\cdot N^{1/p}\Bigg(\sum_{i=1}^{M(k)}\chi_{A_i}(x)\mvint_{5\lambda B_i} g^p\,d\mu\Bigg)^{1/p},
\end{equation}
because the right hand sides of \eqref{eq5} and \eqref{eq6} are equal.
The bounded overlapping of the family of enlarged balls $\{5\lambda B_i\}_{i=1}^{M(k)}$ and the fact that $A_i\subseteq B_i$ together yield
\[
\big\Vert|T_ku|_p\big\Vert_{L^p(X)}^p\leq 
C^pN \sum_{i=1}^{M(k)}\mu(A_i) \Bigg(\,\,\mvint_{5\lambda B_i} g^p\,d\mu\Bigg)\leq
C^pN \sum_{i=1}^{M(k)} \,\,\int_{5\lambda B_i} g^p\,d\mu \leq
C'\Vert g\Vert_{L^p(X)}^p,
\]
where $C'=C'(p,C_d,C_{\PI},\lambda)$.

Finally, if $u\in N^{1,p}(X)$ then \eqref{tkpwest-2} applied with $g=g_u\in L^p(X)$ proves boundedness of 
$\{T_ku\}_{k\in\bbbz}$ in $L^p(X,\ell^p_N)$.
\end{proof}

The next result follows immediately from the definition of $\Vert\cdot\Vert_{1,p}^\ast$ in \eqref{newnorm}, Lemma~\ref{T3}, and Lemma~\ref{TkBDD}.

\begin{corollary}
\label{objectprops}
Suppose that the space supports a $p$-Poincar\'e inequality for some $p\in[1,\infty)$. Then $\Vert\cdot\Vert_{1,p}^*\colon N^{1,p}(X)\to [0, \infty)$ as in \eqref{newnorm} is a well-defined norm 
on $N^{1,p}(X)$ and there exists $C\in(0,\infty)$ satisfying
$$
\Vert u\Vert_{1,p}^*\leq C\|u\|_{N^{1,p}(X)}\quad\mbox{for all $u\in N^{1,p}(X)$.}
$$
\end{corollary}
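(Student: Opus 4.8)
The plan is to verify the three assertions bundled into this corollary in turn --- well-definedness of $\Vert\cdot\Vert_{1,p}^*$ as a map $N^{1,p}(X)\to[0,\infty)$, the norm axioms, and the comparison inequality --- each of which is short once we have Lemma~\ref{T3} and Lemma~\ref{TkBDD} in hand.

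First I would check well-definedness. The point is that $T_ku(x)$ is built only from the integral averages $u_{B_i}$ over balls of generation $k$; hence, if $u=v$ $\mu$-a.e., then $u_{B_i}=v_{B_i}$ for every ball, so $T_ku=T_kv$ pointwise everywhere, $\big\||T_ku|_p\big\|_{L^p(X)}=\big\||T_kv|_p\big\|_{L^p(X)}$ for every $k$, and also $\|u\|_{L^p(X)}=\|v\|_{L^p(X)}$; thus the right-hand side of \eqref{newnorm} is unchanged under $\mu$-a.e.\ modification of the representative. Since two representatives of the same class in $N^{1,p}(X)$ agree $\mu$-a.e.\ (the relation $u\sim v$ means $\|u-v\|_{N^{1,p}(X)}=0$, which forces $\|u-v\|_{L^p(X)}=0$) and conversely $\mu$-a.e.\ equal functions define the same class by Lemma~\ref{T3}, the formula \eqref{newnorm} descends to a genuine function on $N^{1,p}(X)$. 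Finiteness then follows from Lemma~\ref{TkBDD} applied with $g=g_u$, which gives $\big\||T_ku|_p\big\|_{L^p(X)}\le C'\|g_u\|_{L^p(X)}$ uniformly in $k$, so $\limsup_{k\to\infty}\big\||T_ku|_p\big\|_{L^p(X)}^p<\infty$; adding $\|u\|_{L^p(X)}^p<\infty$ shows $\Vert u\Vert_{1,p}^*<\infty$.

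Next I would check the norm axioms. Nonnegativity is immediate, and $\Vert u\Vert_{1,p}^*=0$ forces $\|u\|_{L^p(X)}=0$, hence $u=0$ in $N^{1,p}(X)$. Positive homogeneity is clear because $T_k(\alpha u)=\alpha T_ku$, so every term in \eqref{newnorm} scales by $|\alpha|$. The one step requiring slight care is the triangle inequality, and the plan there is: use linearity of integral averages to get $T_k(u+v)=T_ku+T_kv$, hence $|T_k(u+v)|_p\le|T_ku|_p+|T_kv|_p$ pointwise by the triangle inequality in $\ell^p_N$; then apply the $L^p(X)$ triangle inequality and take $\limsup_{k\to\infty}$, using subadditivity of $\limsup$, to obtain
\[
\limsup_{k\to\infty}\big\||T_k(u+v)|_p\big\|_{L^p(X)}\le\limsup_{k\to\infty}\big\||T_ku|_p\big\|_{L^p(X)}+\limsup_{k\to\infty}\big\||T_kv|_p\big\|_{L^p(X)}.
\]
Combining this with $\|u+v\|_{L^p(X)}\le\|u\|_{L^p(X)}+\|v\|_{L^p(X)}$ and using the monotonicity and the triangle inequality of the map $(a,b)\mapsto(a^p+b^p)^{1/p}$ --- which is exactly the $\ell^p_2$ norm on $\bbbr^2$ --- yields $\Vert u+v\Vert_{1,p}^*\le\Vert u\Vert_{1,p}^*+\Vert v\Vert_{1,p}^*$.

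Finally, the comparison inequality is essentially a restatement of \eqref{tkpwest-2}: with $g=g_u$ the minimal $p$-weak upper gradient, $\big\||T_ku|_p\big\|_{L^p(X)}^p\le(C')^p\|g_u\|_{L^p(X)}^p$ for all $k$, so $\limsup_{k\to\infty}\big\||T_ku|_p\big\|_{L^p(X)}^p\le(C')^p\|g_u\|_{L^p(X)}^p$; since the infimum in \eqref{eq:def-N1p-norm} is attained at $g_u$, we have $\|u\|_{N^{1,p}(X)}^p=\|u\|_{L^p(X)}^p+\|g_u\|_{L^p(X)}^p$, whence $(\Vert u\Vert_{1,p}^*)^p\le\|u\|_{L^p(X)}^p+(C')^p\|g_u\|_{L^p(X)}^p\le\max\{1,(C')^p\}\,\|u\|_{N^{1,p}(X)}^p$, and taking $p$-th roots gives the claim with $C=\max\{1,C'\}$. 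The only genuine subtlety in the whole argument is the triangle inequality: one must resist proving it ``term by term'' in $k$ and instead pass to the $\limsup$ first and then invoke the triangle inequality of the auxiliary two-dimensional norm; everything else is routine bookkeeping with the definitions and Lemma~\ref{TkBDD}.
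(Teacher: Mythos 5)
Your argument is correct and follows exactly the route the paper has in mind: the paper merely asserts that the corollary ``follows immediately from the definition of $\Vert\cdot\Vert_{1,p}^\ast$, Lemma~\ref{T3}, and Lemma~\ref{TkBDD},'' and you have simply unpacked that sentence --- well-definedness from the fact that $T_k u$ depends on $u$ only through ball averages (hence only through the a.e.-equivalence class), positive definiteness via Lemma~\ref{T3}, the triangle inequality via linearity of $T_k$, subadditivity of $\limsup$, and monotonicity of the $\ell^p_2$ norm, and the comparison bound from \eqref{tkpwest-2} with $g = g_u$. No gap; this is the intended proof written out in full.
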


The reader is reminded of the definition of $S_ku$ in \eqref{ukdef}.
\begin{lemma}
\label{pro:T_ku-converges-balls}
Let $u \in N^{1,p}(X)$ with $p \in [1, \infty)$ and assume that $\{h_k\}_{k=1}^\infty$ is a sequence of nonnegative Borel functions in $L^p(X)$ such  that 
\begin{equation}
\label{bwr-249}
|S_ku(x)-S_ku(y)|\leq \int_\gamma h_k\,ds,
\end{equation}
whenever $k\in\bbbn$, $x,y\in X$ satisfy $d(x,y)\geq 2^{-k}$, and $\gamma$ is a rectifiable curve connecting $x$ and $y$. If  $\{h_k\}_{k=1}^\infty$ contains a subsequence that converges weakly in $L^p(X)$ to some nonnegative Borel function $h \in L^p(X)$, then $h$ is a $p$-weak upper gradient of $u$ and hence, $h\ge g_u$ pointwise $\mu$-a.e.\@ in $X$, where $g_u \in L^p(X)$ is the minimal $p$-weak upper gradient of $u$. 
\end{lemma}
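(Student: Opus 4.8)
The plan is to show that $h$ is a $p$-weak upper gradient of $u$ by passing to the limit in the defining inequality \eqref{bwr-249}. The starting point is that $S_k u \to u$ pointwise $\mu$-a.e.\@ as $k\to\infty$ (by Lemma~\ref{T5}), so intuitively the inequalities $|S_ku(x)-S_ku(y)| \le \int_\gamma h_k\,ds$ should pass to the limit to give $|u(x)-u(y)| \le \int_\gamma h\,ds$ along most curves. There are two technical difficulties to resolve: first, \eqref{bwr-249} only controls pairs with $d(x,y)\ge 2^{-k}$, so along a fixed rectifiable curve $\gamma\colon[a,b]\to X$ one must apply it not to the endpoints directly but to a partition of $\gamma$ into sub-curves whose endpoints are far enough apart, and then sum; second, the right-hand sides involve $h_k$, not $h$, so we need Fuglede's lemma (Lemma~\ref{fuglede}) to control $\int_\gamma h_k\,ds$ in terms of $\int_\gamma h\,ds$ along $\operatorname{Mod}_p$-a.e.\@ curve.

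First I would invoke Mazur's lemma: since a subsequence $\{h_{k_i}\}$ converges weakly in $L^p(X)$ to $h$, convex combinations of the $h_{k_i}$ converge strongly in $L^p(X)$ to $h$; call these convex combinations $\tilde h_m = \sum_{i} a_{m,i} h_{k_i}$ (finite sums with $a_{m,i}\ge 0$, $\sum_i a_{m,i}=1$, and only indices $k_i$ with $k_i \ge m$ appearing — one can always arrange this since weak limits are unaffected by discarding finitely many terms). By Fuglede's lemma applied to $\tilde h_m \to h$ in $L^p$, after passing to a further subsequence we get, for $\operatorname{Mod}_p$-a.e.\@ curve $\gamma\in\Gamma(X)$, that $\int_\gamma \tilde h_m\,ds \to \int_\gamma h\,ds$ with all integrals finite. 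Fix such a ``good'' curve $\gamma\colon[a,b]\to X$; we may assume $\gamma$ is parametrized by arc length on $[0,\ell(\gamma)]$. We also restrict to the full-measure set where $S_k u$ converges to $u$ at both endpoints $\gamma(a)$ and $\gamma(b)$ — here one must be slightly careful, as Lemma~\ref{T5} gives a.e.\@ convergence on $X$ but a fixed curve could live in a null set; this is handled by the standard fact (which I would cite from \cite{Haj} or note follows since $\operatorname{Mod}_p(\Gamma_E)=0$ for $\mu$-null $E$ implies $\operatorname{Mod}_p$-a.e.\@ curve spends zero length in $E$, and in particular has endpoints outside $E$ after reparametrization — more precisely, $\operatorname{Mod}_p$-a.e.\@ curve has the property that $S_ku\to u$ along it, including at its endpoints).

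The core estimate: fix a good curve $\gamma$ with endpoints $x=\gamma(a)$, $y=\gamma(b)$ where $S_ku(x)\to u(x)$ and $S_ku(y)\to u(y)$. For each fixed $k$ with $2^{-k} \le \ell(\gamma)$, partition $[0,\ell(\gamma)]$ into consecutive points $0=t_0<t_1<\dots<t_{n}=\ell(\gamma)$ with $d(\gamma(t_{l-1}),\gamma(t_l))\ge 2^{-k}$ for $l<n$ and $d(\gamma(t_{n-1}),\gamma(t_n)) < 2\cdot 2^{-k}$ say — for instance by greedily choosing $t_l$ so that the sub-arc from $t_{l-1}$ to $t_l$ has length exactly $2^{-k}$ (possible since arc-length parametrization is $1$-Lipschitz and the sub-arc length dominates $d$), except possibly the last piece. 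Applying \eqref{bwr-249} to each consecutive pair and the triangle inequality,
\[
|S_ku(x) - S_ku(y)| \le \sum_{l=1}^{n} |S_ku(\gamma(t_{l-1})) - S_ku(\gamma(t_l))| + (\text{last-piece error}),
\]
where each of the first $n-1$ terms is $\le \int_{\gamma|_{[t_{l-1},t_l]}} h_k\,ds$, and these sub-arcs are non-overlapping, so the sum of the first $n-1$ terms is $\le \int_\gamma h_k\,ds$. The last, short piece must also be controlled: its endpoints are within $2\cdot 2^{-k}$, so I would either absorb it by a different application of \eqref{bwr-249} after enlarging (connect $\gamma(t_{n-1})$ to $y$ via a curve, noting we need distance $\ge 2^{-k}$ — if the last piece is too short, merge it with the previous sub-arc instead, so the final sub-arc has length in $[2^{-k}, 2\cdot 2^{-k}]$ and hence endpoint-distance possibly still $\ge 2^{-k}$; if not, one more merge suffices). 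The cleanest fix is: choose the partition so every sub-arc has length in $[2^{-k}, 2\cdot 2^{-k}]$, which forces consecutive-point distance... not necessarily $\ge 2^{-k}$, so actually the right approach is to note that if $d(\gamma(t_{l-1}),\gamma(t_l)) < 2^{-k}$ we simply have both points in a common ball $B^k[\cdot]$-neighborhood and $S_ku$ at those points differs by a controlled amount — but rather than belabor this, I expect the paper handles it by a direct chaining argument, so I would structure it as: (i) if $d(x,y)\ge 2^{-k}$, chain as above with all pieces of length $\approx 2^{-k}$; (ii) the boundary terms vanish in the limit because $h_k$ is uniformly $L^p$-bounded and short arcs contribute $\int_{\text{short}} h_k\,ds \le \|h_k\|_{L^p} (2\cdot 2^{-k})^{1/p'} \to 0$ by Hölder. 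Thus for $k$ large, $|S_ku(x)-S_ku(y)| \le \int_\gamma h_k\,ds + \eta_k$ with $\eta_k\to 0$.

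Taking convex combinations with the $a_{m,i}$ weights (valid since the inequality $|S_{k_i}u(x)-S_{k_i}u(y)| \le \int_\gamma h_{k_i}\,ds + \eta_{k_i}$ is, up to the error, linear on the left after noting $S_ku(x)\to u(x)$: more precisely, for $m$ large, $|S_{k_i}u(x)-S_{k_i}u(y)|$ is within $\epsilon$ of $|u(x)-u(y)|$ for all $k_i\ge m$, so $\sum_i a_{m,i}|S_{k_i}u(x)-S_{k_i}u(y)| \ge |u(x)-u(y)| - \epsilon$), we get $|u(x)-u(y)| - \epsilon \le \int_\gamma \tilde h_m\,ds + \sup_{k_i\ge m}\eta_{k_i}$. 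Letting $m\to\infty$ along the Fuglede subsequence and then $\epsilon\to 0$ yields $|u(\gamma(a)) - u(\gamma(b))| \le \int_\gamma h\,ds$. Since this holds for $\operatorname{Mod}_p$-a.e.\@ $\gamma\in\Gamma(X)$, $h$ is a $p$-weak upper gradient of $u$. The minimality conclusion $h\ge g_u$ pointwise $\mu$-a.e.\@ is then immediate from the definition of the minimal $p$-weak upper gradient recalled earlier in the excerpt.

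**Main obstacle.** The delicate point is the chaining/partition argument: converting the far-apart-pairs hypothesis \eqref{bwr-249} into a genuine curve inequality while ensuring the accumulated boundary error and the discretization error both vanish uniformly enough to survive the Mazur-convex-combination step. Getting the endpoints of $\gamma$ into the good set (where $S_k u$ converges) also requires the standard but non-trivial observation that $\operatorname{Mod}_p$-a.e.\@ curve avoids a prescribed $\mu$-null set — this is where one must be careful rather than hand-wavy.
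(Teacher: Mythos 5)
There is a genuine gap, and also an unnecessary detour.

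The detour first: the hypothesis \eqref{bwr-249} is \emph{already} a full-curve inequality, not merely a two-point estimate. If $\gamma$ is a rectifiable curve with endpoints $x\neq y$, then $d(x,y)>0$, so $d(x,y)\geq 2^{-k}$ holds for all sufficiently large $k$, and \eqref{bwr-249} applies directly to those endpoints along $\gamma$. No chaining into sub-arcs is needed inside this lemma; the chaining is what \emph{produces} \eqref{bwr-249} (that is the content of Lemma~\ref{almostug}), and re-doing it here only introduces spurious boundary errors. Moreover, your attempt to control the ``last-piece error'' by
$\int_{\text{short}}h_k\,ds\le\|h_k\|_{L^p}\,(2\cdot 2^{-k})^{1/p'}$
confuses two different measures: the left-hand side is an arc-length integral along $\gamma$, and H\"older on the curve bounds it by $\|h_k\|_{L^p(\gamma,\,ds)}\cdot(\text{length})^{1/p'}$, not by $\|h_k\|_{L^p(X,\mu)}$ times a power of the length. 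These two $L^p$-norms are unrelated, so this estimate does not hold.

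The genuine gap is the endpoint issue, which you flag but do not resolve. You need $S_{k}u(x)\to u(x)$ and $S_ku(y)\to u(y)$ at the actual endpoints $x,y$ of the curve to pass to the limit. The ``standard fact'' you appeal to --- that $\operatorname{Mod}_p$-a.e.\@ curve has its endpoints outside any prescribed $\mu$-null set --- is false. (Take vertical segments in $[0,1]^2$ with $E=[0,1]\times\{0,1\}$: this is Lebesgue-null, yet every such segment has both endpoints in $E$, and the family has positive $p$-modulus.) What one does know is that $\operatorname{Mod}_p$-a.e.\@ curve spends zero \emph{length} in $E$, which says nothing about the endpoints. The paper sidesteps this by never restricting to curves with good endpoints: it takes Mazur convex combinations $g_k=\sum_{\ell\ge k}\alpha_{k,\ell}h_\ell$ (the condition $\ell\ge k$ makes \eqref{bwr-249} stable under the averaging, since $2^{-\ell}\le 2^{-k}$), sets $v_k=\sum_{\ell\ge k}\alpha_{k,\ell}S_\ell u$, and defines an \emph{everywhere-defined} representative $\tilde u(x):=\limsup_k v_k(x)$. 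After discarding the $p$-exceptional families $\Gamma_1$ (from Fuglede applied to the $g_k$) and $\Gamma_2$ (curves whose whole image lies in the bad set $E$), one shows $\tilde u$ is finite on $|\gamma|$ and $|\tilde u(x)-\tilde u(y)|\le\limsup_k|v_k(x)-v_k(y)|\le\int_\gamma h\,ds$ for the remaining curves. Since $\tilde u=u$ $\mu$-a.e., Lemma~\ref{T3} transfers this to $u$. Your proposal lacks this modification-of-the-representative step, and without it (or an equivalent device) the argument does not close.
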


\begin{proof}
Without loss of generality, we can assume that $\{h_k\}_{k=1}^\infty$ converges weakly in $L^p(X)$ to some Borel function $h \in L^p(X)$. Then by Mazur's lemma (see, e.g., \cite[p.~19]{HKST}), there exists a sequence
\[
g_k := \sum_{\ell=k}^{L(k)} \alpha_{k,\ell} h_\ell \to h\,\,\text{ in $L^p(X)$ as }k\to\infty,
\]
where $\alpha_{k,\ell}\geq0$ and $\sum_{\ell=k}^{L(k)} \alpha_{k,\ell} = 1$ for each $k\in\bbbn$ (with $L(k)\in\bbbn$). By further passing to a subsequence, if necessary, we can assume that $g_k\to h$ pointwise $\mu$-a.e. in $X$ as $k\to\infty$.  Consider the corresponding family of convex combinations of $S_ku$, $v_k := \sum_{\ell=k}^{L(k)} \alpha_{k,
\ell}S_\ell u$. Since $S_ku \to u$ pointwise $\mu$-a.e. in $X$ by Lebesgue's differentiation theorem (Lemma~\ref{T5}), we have that $v_k \to u$ pointwise $\mu$-a.e.\@ in $X$, as well. 

Clearly, if $d(x,y)\geq 2^{-k}$ for some $k\in\bbbn$ and $\gamma\in\Gamma(X)$ connects $x$ and $y$, then by \eqref{bwr-249} we have
\begin{equation}
\label{eq14}
|v_k(x)-v_k(y)|\leq \int_\gamma g_k\, ds.
\end{equation}
Define $\tilde{u}\colon X\to[-\infty,\infty]$ by setting $\tilde{u}(x) := \limsup_{k\to\infty} v_k(x)$ for every $x\in X$ and note that $\tilde{u}=u$ pointwise $\mu$-a.e. in $X$. We will prove that $\tilde{u}$ is finite everywhere on the image $|\gamma|$ for ${\rm Mod}_p$-a.e curve $\gamma\in\Gamma(X)$. To this end, by Fuglede's lemma (Lemma~\ref{fuglede}), there is a set $\Gamma_1\subseteq\Gamma(X)$ with ${\rm Mod}_p(\Gamma_1)=0$ and a subsequence of $\{g_k\}_{k=1}^\infty$ (also denoted by $\{g_k\}_{k=1}^\infty$) such that
\begin{equation}
\label{rwq43}
\int_\gamma g_k\,ds \to \int_\gamma h\,ds \,\in\bbbr
\quad
\mbox{as $k\to\infty$,}
\end{equation}
for every curve $\gamma\in\Gamma(X)\setminus\Gamma_1$. Next, let $E$ be the set of all $x\in X$ for which the convergence $v_k(x)\to u(x)\in\bbbr$ does not hold, and set
$$
\Gamma_2:=\big\{\gamma\in\Gamma(X):\ |\gamma|\subseteq E\big\}.
$$
Note that $E\subseteq X$ is $\mu$-measurable and $\mu(E)=0$, which implies that $\|\infty\cdot\chi_E\|_{L^p(X)}=0$. This, together with the observation that $\infty\cdot\chi_E\in F(\Gamma_2)$, immediately gives ${\rm Mod}_p(\Gamma_2)=0$ and hence, ${\rm Mod}_p(\Gamma_1\cup\Gamma_2)=0$. 

Now fix a curve $\gamma\in\Gamma(X)\setminus(\Gamma_1\cup\Gamma_2)$
and let $\widetilde{\gamma}$ be the arc-length parameterization of $\gamma$. We claim that the sequence $\{v_k(\widetilde{\gamma}(s))\}_{k=1}^\infty$ of real numbers is bounded for every $s\in[0,\ell(\gamma)]$. Let $s\in[0,\ell(\gamma)]$ and note that since $\gamma\not\in\Gamma_2$,  there is a point $t\in[0,\ell(\gamma)]$ such that $\widetilde{\gamma}(t)\not\in E$. By definition of the set $E$, we have that $v_k(\widetilde{\gamma}(t))\to u(\widetilde{\gamma}(t))\in\bbbr$. In particular, $\{v_k(\widetilde{\gamma}(t))\}_{k=1}^\infty$ is a bounded sequence. To proceed, it is enough to consider the scenario when $s\leq t$ as the other case is handled similarly. If $\widetilde{\gamma}(s)=\widetilde{\gamma}(t)$ then $\{v_k(\widetilde{\gamma}(s))\}_{k=1}^\infty$ is bounded by the choice of $t$. If, on the other hand, $\widetilde{\gamma}(s)\neq\widetilde{\gamma}(t)$ then we have $d(\widetilde{\gamma}(t),\widetilde{\gamma}(s))\geq2^{-k}$ for all sufficiently large $k\in\mathbb{N}$ and so, by appealing to \eqref{eq14}  we can write
\begin{equation}
\label{rqi-47}
\begin{split}
\big|v_k(\widetilde{\gamma}(s))\big|
&\leq
\big|v_k(\widetilde{\gamma}(s))-v_k(\widetilde{\gamma}(t))\big|+\big|v_k(\widetilde{\gamma}(t))\big|
\\
&\leq\int_s^tg_k(\widetilde{\gamma}(\tau))d\tau+\big|v_k(\widetilde{\gamma}(t))\big|
\leq\int_\gamma g_k\,ds+\big|v_k(\widetilde{\gamma}(t))\big|.
\end{split}
\end{equation}
Since $\gamma\not\in\Gamma_1$, we have that $\int_\gamma g_k\,ds$ converges to the finite number \eqref{rwq43}, as $k\to\infty$ and hence, is bounded. Therefore, the right-hand side of \eqref{rqi-47} is bounded by a finite constant that is independent of $k$, and it follows that $\{v_k(\widetilde{\gamma}(s))\}_{k=1}^\infty$ is a bounded sequence for each fixed $s\in[0,\ell(\gamma)]$. Consequently, $\tilde{u}$ is finite on the image $|\gamma|$ whenever $\gamma\in\Gamma(X)\setminus(\Gamma_1\cup\Gamma_2)$.

Moving on, we claim next that $h$ is a $p$-weak upper gradient of $\tilde{u}$. Fix $\gamma\in\Gamma(X)\setminus(\Gamma_1\cup\Gamma_2)$ and let $x,y \in X$ be the end-points of $\gamma$. If $x=y$, then the inequality $|\tilde{u}(x) - \tilde{u}(y)| \leq \int_\gamma h\,ds$ is trivially satisfied, since $\tilde{u}(x)=\tilde{u}(y)\in\mathbb{R}$. If $x \neq y$, then $d(x,y)\geq 2^{-k}$ for all $k\in\mathbb{N}$, large enough, and so  \eqref{eq14} is satisfied.  Since $\gamma\not\in\Gamma_1\cup\Gamma_2$, we have that \eqref{rwq43} holds and $\tilde{u}(x),\tilde{u}(y)\in\mathbb{R}$. As such, we can estimate
\[
  |\tilde{u}(x) - \tilde{u}(y)|\leq\limsup_{k\to\infty} |v_k(x) - v_k(y)| \leq \limsup_{k\to\infty} \int_\gamma g_k\,ds = \int_\gamma h\,ds\,.
\]
Therefore, $h \in L^p(X)$ is a $p$-weak upper gradient of $\tilde{u}$, and hence $\tilde{u} \in N^{1,p}(X)$. Since $u=\tilde{u}$ pointwise $\mu$-a.e. in $X$ and both $u$ and $\tilde{u}$ belong to $N^{1,p}(X)$, the function $h$ is also a $p$-weak upper gradient of $u$ by Lemma~\ref{T3}. Therefore, $h \ge g_u$ pointwise $\mu$-a.e.\@ in $X$ by the definition of a minimal $p$-weak upper gradient. This completes the proof of Lemma~\ref{pro:T_ku-converges-balls}.
\end{proof}

We will show that the sequence $h_k:=4|T_k u|_p$ satisfies the hypotheses of Lemma~\ref{pro:T_ku-converges-balls}. We first verify estimate \eqref{bwr-249}.

\begin{lemma}
\label{almostug}
Let  $u\in L^{1}_\loc(X)$ and suppose that $\gamma$ is a rectifiable curve in $X$ with endpoints $x$ and $y$. If $d(x,y)\geq 2^{-k}$ for some $k\in\bbbz$, then
$$
|S_ku(x)-S_ku(y)|\leq 4\int_\gamma |T_ku|_p\,ds,
$$
where $S_ku$ is as in \eqref{ukdef}.
\end{lemma}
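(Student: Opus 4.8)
The plan is to pass to the arc-length parametrization $\widetilde\gamma\colon[0,L]\to X$ of $\gamma$, where $L=\ell(\gamma)$, so that $\widetilde\gamma(0)=x$, $\widetilde\gamma(L)=y$, the map $\widetilde\gamma$ is $1$-Lipschitz, and $L\ge d(x,y)\ge 2^{-k}$. Setting $\Psi(t):=S_ku(\widetilde\gamma(t))=u_{B[\widetilde\gamma(t)]}$ and $f(t):=|T_ku(\widetilde\gamma(t))|_p$ for $t\in[0,L]$, the function $f$ is Borel and $\int_\gamma|T_ku|_p\,ds=\int_0^L f\,dt$, so it suffices to prove $|\Psi(L)-\Psi(0)|\le 4\int_0^L f\,dt$; this is trivial when the integral is infinite, so I would assume it is finite.

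The first ingredient is a pointwise estimate: if $a,b\in[0,L]$ satisfy $|a-b|<2^{-k}$, then $|\Psi(a)-\Psi(b)|\le 2^{-k}f(a)$, and symmetrically $|\Psi(a)-\Psi(b)|\le 2^{-k}f(b)$. Indeed, $\widetilde\gamma(a)\in B[\widetilde\gamma(a)]$ and $\widetilde\gamma(b)\in B[\widetilde\gamma(b)]$, and since $\widetilde\gamma$ is $1$-Lipschitz, $\operatorname{dist}\big(B[\widetilde\gamma(a)],B[\widetilde\gamma(b)]\big)\le d(\widetilde\gamma(a),\widetilde\gamma(b))\le|a-b|<2^{-k}$, so the two balls are either equal or neighbors. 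In either case $B[\widetilde\gamma(b)]$ occurs in the list $B[\widetilde\gamma(a),1],\dots,B[\widetilde\gamma(a),N]$ (that list being the neighbors of $B[\widetilde\gamma(a)]$ padded with copies of $B[\widetilde\gamma(a)]$ itself), so $|u_{B[\widetilde\gamma(a)]}-u_{B[\widetilde\gamma(b)]}|$ equals the absolute value of one component of $2^{-k}T_ku(\widetilde\gamma(a))\in\mathbb R^N$, hence is at most $2^{-k}|T_ku(\widetilde\gamma(a))|_p=2^{-k}f(a)$.

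Next I would run a telescoping argument along a grid of mesh $h:=2^{-k-1}$ with a variable offset. Fix $\tau\in(0,h)$, let $q=q(\tau):=\lfloor(L-\tau)/h\rfloor$, and consider the breakpoints $0,\,\tau,\,\tau+h,\,\dots,\,\tau+qh$, appending $L$ whenever $\tau+qh<L$. Consecutive breakpoints are always at distance $<2^{-k}$, so applying the pointwise estimate to each successive pair — using $f$ at the right endpoint of the initial short gap $[0,\tau]$ and at the left endpoint of every subsequent gap — and summing the resulting bounds yields
$$|\Psi(L)-\Psi(0)|\le 2^{-k}\Big(2f(\tau)+\sum_{j=1}^{q(\tau)}f(\tau+jh)\Big)\qquad\text{for every }\tau\in(0,h).$$
Now I would average over $\tau\in(0,h)$. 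Since $2^{-k}/h=2$, the first term contributes $\frac1h\int_0^h 2^{-k}\cdot 2f(\tau)\,d\tau=4\int_0^h f\,dt$, and, by Tonelli's theorem followed by the translation $s=\tau+jh$, the second term contributes $\frac1h\int_0^h 2^{-k}\sum_{j\ge1}f(\tau+jh)\mathbf{1}_{\{\tau+jh\le L\}}\,d\tau=2\int_h^L f\,dt$. Therefore $|\Psi(L)-\Psi(0)|\le 4\int_0^h f\,dt+2\int_h^L f\,dt\le 4\int_0^L f\,dt$, and recalling $\Psi(0)=S_ku(x)$ and $\Psi(L)=S_ku(y)$ gives the claim.

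I expect the only real obstacle to be the step from a finite chain of ball-averages to the curve integral: for any one fixed grid the bound one gets is a Riemann-sum-type quantity $\sum_j 2^{-k}f(\tau+jh)$, and this is not controlled by $\int_\gamma|T_ku|_p\,ds$ because $|T_ku|_p$ need only be integrable along $\gamma$. Averaging over the offset $\tau$ is exactly what turns this sum into a genuine integral. The choice $h=2^{-k-1}$ serves two purposes: it keeps consecutive grid points strictly within distance $2^{-k}$, so that the relevant balls really are neighbors, and it makes the constant come out to exactly $4$.
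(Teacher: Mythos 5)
Your proof is correct. It follows the same high‑level strategy as the paper's (arc-length parametrization, partition of $[0,L]$ at the scale $2^{-(k+1)}$, telescoping through ball averages, and the neighbor structure), but differs in the one step you yourself identify as the crux: converting the telescoping sum into the curve integral. The paper partitions $[0,L]$ into subintervals $[t_{i-1},t_i]$ with $2^{-(k+1)}\le t_i-t_{i-1}<2^{-k}$ and, for each $i$, uses the triangle inequality through an \emph{intermediate} ball $B[\gamma(t)]$ for an arbitrary $t\in[t_{i-1},t_i]$: since both $B[\gamma(t_{i-1})]$ and $B[\gamma(t_i)]$ are neighbors of $B[\gamma(t)]$, one gets $|u_{B[\gamma(t_i)]}-u_{B[\gamma(t_{i-1})]}|\le 2\cdot 2^{-k}\,|T_ku(\gamma(t))|_p$ for \emph{every} such $t$, and integrating this over the subinterval and using $t_i-t_{i-1}\ge 2^{-(k+1)}$ yields the $4\int_{t_{i-1}}^{t_i}|T_ku(\gamma)|_p\,dt$ bound directly, with no averaging device. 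You instead compare consecutive grid points directly (using that $B[\widetilde\gamma(a)]$ and $B[\widetilde\gamma(b)]$ are neighbors of \emph{each other} when $|a-b|<2^{-k}$), obtain a Riemann-sum bound, and then average over the offset $\tau\in(0,h)$ to promote the sum to an integral — a one-dimensional Crofton-type argument. Both routes produce the constant $4$. The paper's intermediate-point trick is lighter since it already gives a pointwise-in-$t$ bound that can be integrated with no further work; your offset-averaging is a slightly heavier but standard alternative. One harmless imprecision in your write-up: when $B[\widetilde\gamma(a)]=B[\widetilde\gamma(b)]$, the ball need not literally appear in the padded neighbor list if $n_i=N$, but the estimate $|\Psi(a)-\Psi(b)|\le 2^{-k}f(a)$ is then trivially true since its left side vanishes, so nothing breaks.
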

\begin{proof}
We can assume that $\gamma:[0,L]\to X$ is parametrized by arc-length and $x=\gamma(0)$, $y=\gamma(L)$.
Consider a partition
$$
0=t_0<t_1<\cdots<t_m=L,
$$
of $[0,L]$ such that the length of each subinterval $[t_{i-1},t_i]$ satisfies
$$
2^{-(k+1)}\leq t_i-t_{i-1}<2^{-k}. 
$$
This is possible because $L=\ell(\gamma)\geq d(x,y)\geq 2^{-k}$. Since $\gamma$ is parametrized by the arc-length, it follows that
\begin{equation}
\label{len-35}
\ell(\gamma\vert_{[t_{i-1},t_i]})=t_i-t_{i-1}< 2^{-k}.
\end{equation}
Observe that
\begin{equation}
\label{ynap-345}
\begin{split}
|S_ku(x)-S_ku(y)|&\leq\sum_{i=1}^m\big|S_ku(\gamma(t_i))-S_ku(\gamma(t_{i-1}))\big|\\
&=\sum_{i=1}^m\big|u_{B[\gamma(t_i)]}-u_{B[\gamma(t_{i-1})]}\big|.
\end{split}
\end{equation}
On the other hand, if $i\in\{1,\dots,m\}$ is fixed then for any $t\in[t_{i-1},t_i]$ we have
\begin{equation}
\label{eni-3u}
\big|u_{B[\gamma(t_i)]}-u_{B[\gamma(t_{i-1})]}\big|\leq
\big|u_{B[\gamma(t_i)]}-u_{B[\gamma(t)]}\big|+\big|u_{B[\gamma(t)]}-u_{B[\gamma(t_{i-1})]}\big|.
\end{equation}
It follows from \eqref{len-35} that the distance between $\gamma(t)$ and any of the points $\gamma(t_{i-1})$ and $\gamma(t_{i})$ is less than $2^{-k}$ and hence, each of the balls $B[\gamma(t_{i-1})]$ and $B[\gamma(t_{i})]$ is a neighbor of $B[\gamma(t)]$. Combining this with \eqref{eni-3u} and the formula for $|T_ku(\gamma(t))|_p$ in \eqref{Tkdef2} yields
$$
\big|u_{B[\gamma(t_i)]}-u_{B[\gamma(t_{i-1})]}\big|\leq 2\cdot 2^{-k}|T_ku(\gamma(t))|_p.
$$
Therefore, integration with respect to $t\in[t_{i-1},t_i]$ gives
\begin{align*}
(t_i-t_{i-1})\big|u_{B[\gamma(t_i)]}-u_{B[\gamma(t_{i-1})]}\big|&\leq
2\cdot 2^{-k}\int_{t_{i-1}}^{t_i}|T_ku(\gamma(t))|_p\,dt.
\end{align*}
Since $t_i-t_{i-1}\geq 2^{-(k+1)}$ we have
$$
\big|u_{B[\gamma(t_i)]}-u_{B[\gamma(t_{i-1})]}\big|\leq 4\int_{t_{i-1}}^{t_i}|T_ku(\gamma(t))|_p\,dt.
$$
Given that $i\in\{1,\dots,m\}$ was arbitrary, we can add the inequalities in \eqref{ynap-345} to obtain
\[
|S_ku(x)-S_ku(y)|\leq 4\int_{0}^{L}|T_ku(\gamma(t))|_p\,dt=4\int_{\gamma}|T_ku|_p\,ds.\qedhere
\]
\end{proof}

Next we show that if the space supports a $p$-Poincar\'e inequality for some $p\in [1,\infty)$, and $u\in N^{1,p}(X)$, then we can always extract a subsequence of $\{|T_k u|_p\}_{k=1}^\infty$ that converges weakly in $L^p(X)$. In the case when $p>1$, we can rely on the reflexivity of $L^p$ and Lemma~\ref{TkBDD}. However, the case of $p=1$ is more delicate; it relies on the
Dunford--Pettis theorem (Lemma~\ref{thm:DunfordPettis}) and some ideas from \cite{FraHajKos}.

For the next result, see also \cite[Lemma~6]{FraHajKos}. We will only need it for $p=1$.
\begin{lemma}
\label{lem:Tku-equiint-balls}
Suppose that the space supports a $p$-Poincar\'e inequality for some $p\in [1,\infty)$. If $u\in N^{1,p}(X)$, then every subsequence of $\{|T_ku|_p^p \}_{k=1}^\infty$ has a further subsequence that is weakly convergent in $L^1(X)$.
\end{lemma}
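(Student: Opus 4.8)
The plan is to apply the Dunford--Pettis theorem (Lemma~\ref{thm:DunfordPettis}) to the family $\mathcal{F}:=\{|T_ku|_p^p:k\in\bbbn\}$. We first note $\mathcal{F}\subseteq L^1(X)$ and that $\mathcal{F}$ is bounded in $L^1(X)$: by \eqref{tkpwest-2} of Lemma~\ref{TkBDD}, applied with $g=g_u\in L^p(X)$, we have $\||T_ku|_p^p\|_{L^1(X)}=\||T_ku|_p\|_{L^p(X)}^p\le(C')^p\|g_u\|_{L^p(X)}^p$ for all $k$. Once conditions (a) and (b) of Lemma~\ref{thm:DunfordPettis} are verified for $\mathcal{F}$, every sequence in $\mathcal{F}$ --- in particular every subsequence of $\{|T_ku|_p^p\}_{k=1}^\infty$ --- has a further subsequence that converges weakly in $L^1(X)$, which is the assertion. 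The only additional input is the pointwise bound \eqref{tkpwest}: since $B[x]=B_i$ for $x\in A_i$, it can be rewritten as
\begin{equation}
\label{plan-ptw}
|T_ku(x)|_p^p\le C^p\sum_{i=1}^{M(k)}\chi_{A_i}(x)\,\mvint_{5\lambda B_i}g_u^p\,d\mu,\qquad x\in X,
\end{equation}
which we combine with the facts that $\{A_i\}_i$ is a partition of $X$, that $A_i\subseteq B_i\subseteq 5\lambda B_i$, and that $\{5\lambda B_i\}_i$ has bounded overlap with a constant $C_0$ independent of $k$.

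To verify equi-integrability (condition (a)), fix $\eps>0$ and decompose $g_u^p=\min\{g_u^p,\lambda_0\}+(g_u^p-\lambda_0)_+=:\phi_{\lambda_0}+\psi_{\lambda_0}$; since $g_u^p\in L^1(X)$, we have $\|\psi_{\lambda_0}\|_{L^1(X)}\to0$ as $\lambda_0\to\infty$, so we choose $\lambda_0$ with $C^pC_0\|\psi_{\lambda_0}\|_{L^1(X)}<\eps/2$. Integrating \eqref{plan-ptw} over a $\mu$-measurable set $S$, using $\mvint_{5\lambda B_i}\phi_{\lambda_0}\,d\mu\le\lambda_0$ together with $\sum_i\mu(A_i\cap S)=\mu(S)$ for the bounded part, and $\mu(A_i\cap S)\mvint_{5\lambda B_i}\psi_{\lambda_0}\,d\mu\le\int_{5\lambda B_i}\psi_{\lambda_0}\,d\mu$ followed by bounded overlap for the rest, gives $\int_S|T_ku|_p^p\,d\mu\le C^p(\lambda_0\mu(S)+C_0\|\psi_{\lambda_0}\|_{L^1(X)})$, uniformly in $k$. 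Taking $\delta:=\eps/(2C^p\lambda_0)$ yields $\sup_k\int_S|T_ku|_p^p\,d\mu<\eps$ whenever $\mu(S)<\delta$.

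To verify condition (b), fix $\eps>0$, a basepoint $x_0\in X$, and set $E:=B(x_0,R)$, which has finite measure. If $x\notin E$ and $k\in\bbbn$, then $2^{-k}\le1$, so $5\lambda B[x]\subseteq B(x,6\lambda)$ and hence every point of $5\lambda B[x]$ lies at distance $>R-6\lambda$ from $x_0$; in particular $5\lambda B_i\subseteq X\setminus B(x_0,R-6\lambda)$ for every $i$ with $A_i\setminus E\neq\emptyset$. Integrating \eqref{plan-ptw} over $X\setminus E$, bounding $\mu(A_i\cap(X\setminus E))\mvint_{5\lambda B_i}g_u^p\,d\mu\le\int_{5\lambda B_i}g_u^p\,d\mu$ and using bounded overlap, we obtain $\int_{X\setminus E}|T_ku|_p^p\,d\mu\le C^pC_0\int_{X\setminus B(x_0,R-6\lambda)}g_u^p\,d\mu$, uniformly in $k$. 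Since $g_u^p\in L^1(X)$, choosing $R$ large makes this $<\eps$, and Lemma~\ref{thm:DunfordPettis} applies.

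The main obstacle is that \eqref{tkpwest} controls $|T_ku(x)|_p$ only by an \emph{average} of $g_u^p$ over the enlarged ball $5\lambda B[x]$, which is much bigger than the point $x$; so neither equi-integrability nor the tail estimate transfers directly from $g_u^p\in L^1(X)$. Equi-integrability is rescued by the level-set splitting of $g_u^p$ (as in \cite{FraHajKos}): the bounded part contributes at most $\lambda_0\mu(S)$ on a small set, while the unbounded part is globally small in $L^1$, and bounded overlap prevents the averaging from amplifying it. The tail estimate works because the radii $2^{-k}$ stay bounded, so passing from $B[x]$ to $5\lambda B[x]$ reaches back only a fixed distance $6\lambda$, independent of $k$ --- which is exactly what allows one finite-measure set $E$ to handle the whole sequence.
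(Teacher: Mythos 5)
Your proof is correct and reaches the same conclusion via the same overall skeleton as the paper (apply Dunford--Pettis using the pointwise bound \eqref{tkpwest}), but the verification of equi-integrability is done by a genuinely different device. The paper fixes a parameter $\sigma>0$ and splits the index set into ``good'' indices $\mathcal{G}$, where $\mu(S\cap B_i)\le\sigma\mu(5\lambda B_i)$, and ``bad'' indices $\mathcal{B}$; the good sum is small because $\sigma$ is small, while the bad sum is controlled by $\int_G g_u^p\,d\mu$ over a set $G=\bigcup_{i\in\mathcal{B}}5\lambda B_i$ whose measure is forced to be small when $\mu(S)$ is small, and then absolute continuity of the integral finishes. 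You instead truncate the \emph{function}: writing $g_u^p=\phi_{\lambda_0}+\psi_{\lambda_0}$ with $\phi_{\lambda_0}\le\lambda_0$ and $\|\psi_{\lambda_0}\|_{L^1}$ small, the bounded part contributes at most $\lambda_0\mu(S)$ (because $\{A_i\}$ partitions $X$), and the unbounded part contributes at most $C_0\|\psi_{\lambda_0}\|_{L^1}$ by the inequality $\mu(A_i\cap S)\le\mu(5\lambda B_i)$ and bounded overlap. Both are standard routes to equi-integrability; yours avoids the auxiliary set $G$ and the appeal to absolute continuity of the integral, at the cost of introducing the truncation threshold $\lambda_0$, and it is arguably slightly more direct. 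The verification of the tail condition (b) is essentially identical in both proofs: a fixed-radius enlargement argument ($5\lambda B_i$ reaching back only a bounded distance $6\lambda$) plus bounded overlap. No gaps.
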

\begin{proof}
Fix $u\in N^{1,p}(X)$.
We will prove that $\{|T_ku|_p^p \}_{k=1}^\infty$ satisfies (a) and (b) in Lemma~\ref{thm:DunfordPettis}.

To verify (b), fix $\eps\in(0,\infty)$ and $k\in\bbbn$. Since inequality \eqref{tkpwest} is satisfied with $g=g_u\in L^p(X)$, we have
$$
|T_k u(x)|_p^p \lesssim 
\mvint_{5\lambda B[x]} g_u^p\,d\mu=
\sum_{i=1}^{M(k)}\chi_{A_i}(x)\,\mvint_{5\lambda B_i} g_u^p\,d\mu
\quad\mbox{for every $x \in X$.}
$$
Consequently, since $A_i \subseteq B_i$, for every measurable set $S\subseteq X$, we have
\begin{equation}
\label{eq8}
\int_S |T_k u|_p^p\,d\mu \lesssim \sum_{i=1}^{M(k)} \frac{\mu(S \cap B_i)}{\mu(5\lambda B_i)} \int_{5\lambda B_i} g_u^p\,d\mu
\leq\sum_{i:\, S\cap B_i\neq\varnothing}\ \int_{5\lambda B_i} g_u^p\, d\mu.
\end{equation}

Fix $x_o\in X$, $R>6\lambda$, and let $S_R:=X\setminus B(x_o,2R)$.
Each of the balls $B_i$ has radius $2^{-k}<1\leq\lambda$. Thus, if $S_R\cap B_i\neq\varnothing$, then 
$5\lambda B_i\cap B(x_o,R)=\varnothing$, by a simple application of the triangle inequality, and hence \eqref{eq8} and the bounded overlapping of the balls $\{ 5\lambda B_i\}_i$ yield
$$
\int_{X\setminus B(x_o,2R)} |T_ku|_p^p\, d\mu\lesssim \int_{X\setminus B(x_o,R)} g_u^p\, d\mu<\eps,
$$
provided $R$ is sufficiently large. This proves condition (b) in Lemma~\ref{thm:DunfordPettis} with $E:=B(x_o,2R)$.

Next, we prove that condition (a) holds.
Note that Lemma~\ref{TkBDD} implies that $\{|T_ku|_p^p \}_{k=1}^\infty$ is bounded in $L^1(X)$.
Thus it remains to prove that the family is equi-integrable. 

Fix $\eps\in(0,\infty)$ and $k\in\bbbn$, and let $\sigma\in(0,\infty)$ be any number. The value of $\sigma$ will be fixed later.

Given a $\mu$-measurable set $S \subseteq X$, we define $\mathcal{G}$ to be the collection of all integers $i\in[1,M(k)]$ satisfying $\mu(S \cap B_i) \le \sigma\mu(5\lambda B_i)$, and we let $\mathcal{B}$ consist of all integers $i\in[1,M(k)]\setminus\mathcal{G}$. 
Note that $\mu(5\lambda B_i)<\mu(S\cap B_i)/\sigma$ for all $i\in \mathcal{B}$.
Thus $\mathcal{G}$ and $\mathcal{B}$ partition the set of integers in $[1,M(k)]$ and \eqref{eq8} yields
\begin{equation}
\label{eq9}
\int_S |T_k u|_p^p\,d\mu \leq C_1\Bigg(
\sigma\sum_{i\in\mathcal{G}}\,\int_{5\lambda B_i} g_u^p\, d\mu +
\sum_{i\in\mathcal{B}}\, \int_{5\lambda B_i} g_u^p\, d\mu \Bigg)\, ,
\end{equation}
where the constant $C_1$ does not depend on $\sigma$ or $k$.

Assume that the overlapping constant of the balls $\{5\lambda B_i\}_i$ is bounded by $C_2$. Now we fix $\sigma\in (0,\infty)$ such that
$$
C_2\sigma\Vert g_u\Vert_p^p<\frac{\eps}{2C_1}\, .
$$
Then the first sum in \eqref{eq9} can be estimated by
\begin{equation}
\label{eq10}
\sigma\sum_{i\in\mathcal{G}}\,\int_{5\lambda B_i} g_u^p\, d\mu\leq C_2\sigma\int_X g_u^p\, d\mu<\frac{\eps}{2C_1}\, .
\end{equation}
Regarding the second sum in \eqref{eq9}, we have
\begin{equation}
\label{eq11}
\sum_{i\in\mathcal{B}}\, \int_{5\lambda B_i} g_u^p\, d\mu\leq C_2 \int_G g_u^p\, d\mu
\quad
\text{where}
\quad
G:=\bigcup_{i\in\mathcal{B}} 5\lambda B_i.
\end{equation}
Note that
\begin{equation}
\label{eq12}
\mu(G)\le\sum_{i\in\mathcal{B}}\mu(5\lambda B_i)\leq \sum_{i\in\mathcal{B}} \frac{\mu(S \cap B_i)}{\sigma} \le \frac{C_2\,\mu(S)}{\sigma}\,.
\end{equation}
Absolute continuity of the integral yields $\tilde{\delta}\in (0,\infty)$ such that
\begin{equation}
\label{eq13}
\int_G g_u^p\, d\mu < \frac{\eps}{2C_1C_2}\, ,
\end{equation}
provided $\mu(G)<\tilde{\delta}$.

Let $\delta:=\sigma\tilde{\delta}/C_2$. If $\mu(S)<\delta$, then $\mu(G)<\tilde{\delta}$ by \eqref{eq12} and hence \eqref{eq13} is satisfied. This, in concert with \eqref{eq9}, \eqref{eq10}, and \eqref{eq11} yield
$$
\int_S |T_ku|_p^p\, d\mu<
C_1\Big(\frac{\eps}{2C_1}+C_2\, \frac{\eps}{2 C_1C_2}\Big)=\eps,
$$
and that completes the proof of the equi-integrability and the proof of Lemma~\ref{lem:Tku-equiint-balls}.
\end{proof}

\begin{corollary}
\label{cor:subseq-T_k-wkconv}
Suppose the space supports a $p$-Poincar\'e inequality for some $p\in [1,\infty)$. If $u \in N^{1,p}(X)$ then, every subsequence of $\{|T_k u|_p\}_{k=1}^\infty$ has a further subsequence that converges weakly in $L^p(X)$.
\end{corollary}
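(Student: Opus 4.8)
The plan is to dispose of the two ranges of $p$ separately, reducing each to a result already established above.

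First I would treat the case $p\in(1,\infty)$. By Lemma~\ref{TkBDD}, applied with $g=g_u\in L^p(X)$, the sequence $\{T_ku\}_{k\in\bbbz}$ is bounded in $L^p(X,\ell^p_N)$, which is precisely to say that $\{\,|T_ku|_p\,\}_{k=1}^\infty$ is bounded in $L^p(X)$; the same bound, of course, holds along any subsequence. On the other hand, $L^p(X)$ is uniformly convex for $p\in(1,\infty)$ by Lemma~\ref{Lp}, hence reflexive by the Milman--Pettis theorem (Lemma~\ref{milmanpettis}). In a reflexive Banach space every bounded sequence has a weakly convergent subsequence, so any subsequence of $\{\,|T_ku|_p\,\}_{k=1}^\infty$, being bounded, admits a further subsequence that converges weakly in $L^p(X)$. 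This settles the case $p>1$.

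It remains to consider $p=1$. Here the norm $|\cdot|_p$ on $\bbbr^N$ is $|\cdot|_1$, and $|T_ku(x)|_p^p=|T_ku(x)|_1^1=|T_ku(x)|_1=|T_ku(x)|_p$ for every $x\in X$ and every $k$; that is, the two sequences $\{\,|T_ku|_p^p\,\}_{k=1}^\infty$ and $\{\,|T_ku|_p\,\}_{k=1}^\infty$ coincide. Moreover $L^1(X)=L^p(X)$ when $p=1$. Thus Lemma~\ref{lem:Tku-equiint-balls}, invoked with this value of $p$, says exactly that every subsequence of $\{\,|T_ku|_p\,\}_{k=1}^\infty$ has a further subsequence that converges weakly in $L^p(X)$, which is the desired conclusion.

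I do not anticipate any genuine obstacle in this argument: the corollary is a bookkeeping combination of Lemma~\ref{TkBDD} together with reflexivity of $L^p$ in the range $p\in(1,\infty)$, and of Lemma~\ref{lem:Tku-equiint-balls} (whose proof, via the Dunford--Pettis theorem, carries all the real work of the borderline case) when $p=1$. The only point meriting a line of care is the identification $|T_ku|_p^p=|T_ku|_p$ at $p=1$, which makes the weak $L^1$-convergence supplied by Lemma~\ref{lem:Tku-equiint-balls} literally the statement we want.
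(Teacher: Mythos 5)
Your proposal is correct and follows the same two-case decomposition as the paper: for $p>1$ it uses Lemma~\ref{TkBDD} plus reflexivity of $L^p(X)$, and for $p=1$ it invokes Lemma~\ref{lem:Tku-equiint-balls}. The explicit observation that $|T_ku|_p^p=|T_ku|_p$ when $p=1$ is a useful clarification that the paper leaves implicit.
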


\begin{proof}
If $p>1$, then the sequence $\{|T_k u|_p\}_{k=1}^\infty$ is bounded in $L^p(X)$ by Lemma~\ref{TkBDD}, and
the result follows from the reflexivity of $L^p(X)$.
If $p=1$, then the existence of a weakly convergent subsequence is guaranteed by Lemma~\ref{lem:Tku-equiint-balls}.
\end{proof}

\subsection{Proof of the main result}
\label{ssec:proof}
\begin{proof}[Proof of Theorem~\ref{main}]
We need to prove that:
\begin{itemize}[label={\footnotesize\textbullet}]
\item $\Vert\cdot\Vert_{1,p}^*\approx\Vert\cdot\Vert_{N^{1,p}(X)}$ on $N^{1,p}(X)$ when $p\in [1,\infty)$;
\vskip.08in

\item the norm $\Vert\cdot\Vert_{1,p}^*$ is uniformly convex on $N^{1,p}(X)$ when $p\in (1,\infty)$.
\end{itemize}
Then reflexivity of $N^{1,p}(X)$, $p\in (1,\infty)$, will follow directly from the Milman-Pettis theorem, Lemma~\ref{milmanpettis}. 

Therefore, the proof of Theorem~\ref{main} is contained in Proposition~\ref{gradcomparable} and Proposition~\ref{F-UniCon} below.

\begin{proposition}
\label{gradcomparable}
Suppose the space supports a $p$-Poincar\'e inequality for some $p\in [1,\infty)$.
Then there exists  $C=C(p,C_d,c_{\PI},\lambda)\in(0,\infty)$ such that
$$
4^{-1}\|g_u\|_{L^p(X)}\leq \limsup_{k\to\infty} \big\Vert|T_{k} u|_p\big\Vert_{L^p(X)}\leq C\|g_u\|_{L^p(X)},
$$
for all $u\in N^{1,p}(X)$. Consequently,  $\Vert u\Vert_{1,p}^*\approx\Vert u\Vert_{N^{1,p}(X)}$ for all $u\in N^{1,p}(X)$.
\end{proposition}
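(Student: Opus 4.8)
The plan is to treat the two inequalities separately and then read off the norm equivalence. The upper bound $\limsup_{k\to\infty}\||T_ku|_p\|_{L^p(X)}\le C\|g_u\|_{L^p(X)}$ is essentially already proved: since $g_u\in L^p(X)$ is a $p$-weak upper gradient of $u$ that is finite $\mu$-a.e., estimate \eqref{tkpwest-2} in Lemma~\ref{TkBDD}, applied with $g=g_u$, gives $\||T_ku|_p\|_{L^p(X)}\le C'\|g_u\|_{L^p(X)}$ for every $k\in\bbbz$, with $C'=C'(p,C_d,c_{\PI},\lambda)$; taking the limsup gives the claim. In particular $L:=\limsup_{k\to\infty}\||T_ku|_p\|_{L^p(X)}<\infty$.

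For the lower bound $4^{-1}\|g_u\|_{L^p(X)}\le L$, the idea is to feed the weak limit of a suitable subsequence of $|T_ku|_p$ into the curve lemmas. First I would choose a subsequence $(k_i)$ with $\||T_{k_i}u|_p\|_{L^p(X)}\to L$, which is possible since $L<\infty$. By Corollary~\ref{cor:subseq-T_k-wkconv}, passing to a further subsequence (still written $(k_i)$) I may assume $|T_{k_i}u|_p$ converges weakly in $L^p(X)$ to some $h\in L^p(X)$, and since the $|T_{k_i}u|_p$ are nonnegative Borel functions I take $h$ to be a nonnegative Borel representative. Now set $h_k:=4|T_ku|_p$ for $k\in\bbbn$: each $h_k$ is a nonnegative Borel function in $L^p(X)$ by Lemma~\ref{TkBDD}, and Lemma~\ref{almostug} establishes exactly the curve estimate \eqref{bwr-249} for this choice. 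The subsequence $h_{k_i}=4|T_{k_i}u|_p$ converges weakly in $L^p(X)$ to $4h$, so Lemma~\ref{pro:T_ku-converges-balls} applies and shows $4h$ is a $p$-weak upper gradient of $u$; by minimality $4h\ge g_u$ pointwise $\mu$-a.e. Finally, weak lower semicontinuity of the $L^p(X)$-norm gives $\|h\|_{L^p(X)}\le\liminf_{i\to\infty}\||T_{k_i}u|_p\|_{L^p(X)}=L$, whence
\[
\|g_u\|_{L^p(X)}\le 4\|h\|_{L^p(X)}\le 4L,
\]
which is the asserted inequality.

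For the consequence, I would just compare \eqref{newnorm} with \eqref{eq:def-N1p-norm}, using that the infimum in the latter is attained at $g_u$. From $4^{-p}\|g_u\|_{L^p(X)}^p\le\limsup_{k\to\infty}\||T_ku|_p\|_{L^p(X)}^p\le C^p\|g_u\|_{L^p(X)}^p$ one obtains $\|u\|_{L^p(X)}^p+4^{-p}\|g_u\|_{L^p(X)}^p\le(\|u\|_{1,p}^*)^p\le\|u\|_{L^p(X)}^p+C^p\|g_u\|_{L^p(X)}^p$, and hence $4^{-1}\|u\|_{N^{1,p}(X)}\le\|u\|_{1,p}^*\le\max\{1,C\}\|u\|_{N^{1,p}(X)}$ for all $u\in N^{1,p}(X)$ (the upper estimate being already contained in Corollary~\ref{objectprops}).

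The substantive work has been front-loaded into Lemma~\ref{almostug}, Lemma~\ref{pro:T_ku-converges-balls}, and Corollary~\ref{cor:subseq-T_k-wkconv}, so the proposition itself is an assembly; the only points requiring care are that the subsequence is chosen to \emph{simultaneously} realize the limsup and admit a weakly convergent sub-subsequence (harmless, since passing to a sub-subsequence does not change the limit of the norms), and that the factor $4$ produced by Lemma~\ref{almostug} is tracked correctly, as it is precisely what yields the constant $4^{-1}$ in the statement. I do not expect a genuine obstacle here, though I note that the weak compactness used in Corollary~\ref{cor:subseq-T_k-wkconv} is where the argument is most delicate, namely in the borderline case $p=1$ handled via the Dunford--Pettis theorem.
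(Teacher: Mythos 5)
Your proof is correct and takes essentially the same route as the paper: the upper bound via Lemma~\ref{TkBDD}, and the lower bound by extracting a weakly convergent subsequence (Corollary~\ref{cor:subseq-T_k-wkconv}), applying Lemma~\ref{pro:T_ku-converges-balls} with $h_k=4|T_ku|_p$ (whose hypothesis \eqref{bwr-249} is supplied by Lemma~\ref{almostug}), and invoking weak lower semicontinuity of the $L^p$-norm. The only inessential difference is that you pick the initial subsequence to realize the $\limsup$ directly, whereas the paper realizes the $\liminf$, which also yields Remark~\ref{liminf-limsup-equiv} as a by-product.
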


\begin{proof}
Fix $u \in N^{1,p}(X)$ and let $g_u \in L^p(X)$ denote the minimal $p$-weak upper gradient of $u$. In view of Lemma~\ref{TkBDD}, we immediately have that
\begin{equation}
\label{eq4}
\limsup_{k\to\infty} \big\Vert|T_{k} u|_p\big\Vert_{L^p(X)}\leq C\|g_u\|_{L^p(X)}
\end{equation}
for some $C=C(p,C_d,c_{\PI},\lambda)\in(0,\infty)$. 

To see the opposite inequality, take a subsequence $\{|T_{k_j} u|_p\}_{j=1}^\infty$ of $\{|T_{k} u|_p\}_{k=1}^\infty$ such that
\begin{equation}
\label{liminf-est}
\lim_{j\to\infty} \big\Vert|T_{k_j} u|_p\big\Vert_{L^p(X)}
=\liminf_{k\to\infty} \big\Vert|T_{k} u|_p\big\Vert_{L^p(X)}.
\end{equation}
In light of Corollary~\ref{cor:subseq-T_k-wkconv}, by passing to a further subsequence, we can assume $\{|T_{k_j} u|_p\}_{j=1}^\infty$ converges weakly in $L^p(X)$.
Let $|T|(u) \in L^p(X)$ be a Borel representative of the weak limit of $\{|T_{k_j} u|_p\}_{j=1}^\infty$  and set $h_k:=4|T_{k} u|_p$ and $h:=4|T|(u)$. Note that $h$ and each $h_k$ are nonnegative  Borel functions. Since $\{h_{k_j}\}_{j=1}^\infty$ converges weakly to $h$ in $L^p(X)$, by appealing to Lemma~\ref{almostug}, we can conclude that the pair $\big(\{h_k\}_{k=1}^\infty, h\big)$ satisfies the hypotheses of Lemma~\ref{pro:T_ku-converges-balls}. Therefore, we have that $h$ is a $p$-weak upper gradient of $u$ and hence, $h\geq g_u$ pointwise $\mu$-a.e. in $X$. Combining this fact with 
\eqref{liminf-est} and the lower semicontinuity of the $L^p$-norm (with respect to the weak convergence), we can estimate
\begin{equation}
\label{svv-2}
\begin{split}
\limsup_{k\to\infty} \big\Vert|T_{k} u|_p\big\Vert_{L^p(X)} 
&\ge 
\liminf_{k\to\infty} \big\Vert |T_k u|_p\big\Vert_{L^p(X)} 
= 4^{-1}\lim_{j\to\infty}\Vert h_{k_j}\Vert_{L^p(X)}\\
&\ge 
4^{-1}\|h\|_{L^p(X)} \ge 4^{-1}\|g_u\|_{L^p(X)}.
\end{split}
\end{equation}
The proof of Proposition~\ref{gradcomparable} is now complete.
\end{proof}

\begin{remark}
\label{liminf-limsup-equiv}
Combining \eqref{eq4} and \eqref{svv-2} we can conclude that for $p\in[1,\infty)$, there is a finite constant $\xi=\xi(p,C_d,c_{\PI},\lambda)\ge1$ satisfying
\begin{equation*}
\label{eq:liminf-limsup-equiv}
\liminf_{k\to\infty}\big\Vert|T_{k} u|_p\big\Vert_{L^p(X)}\le \limsup_{k\to\infty} \big\Vert|T_{k} u|_p\big\Vert_{L^p(X)} \le \xi\liminf_{k\to\infty} \big\Vert|T_{k} u|_p\big\Vert_{L^p(X)},
\end{equation*}
for every $u\in N^{1,p}(X)$. 
\end{remark}

We will now proceed to showing that $\Vert\cdot\Vert_{1,p}^*$ is uniformly convex on $N^{1,p}$ when $p\in(1,\infty)$.

\begin{proposition}
\label{F-UniCon}
Suppose the space supports a $p$-Poincar\'e inequality for some $p\in (1,\infty)$. Then the
norm $\Vert\cdot\Vert_{1,p}^*$ is uniformly convex  on $N^{1,p}(X)$.  In particular, the Banach space $(N^{1,p}(X),\|\cdot\|_{N^{1,p}(X)})$ is reflexive. 
\end{proposition}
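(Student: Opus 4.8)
The plan is to deduce uniform convexity of $\|\cdot\|_{1,p}^*$ from the uniform convexity of $L^p$-type spaces (Lemma~\ref{Lp} and Corollary~\ref{clarkson}), transferred through the map $u\mapsto T_k u$. The essential point is that the quantity $\big\|\,|T_k u|_p\big\|_{L^p(X)}$ is, for each fixed $k$, a seminorm coming from the linear map $T_k\colon N^{1,p}(X)\to L^p(X,\ell^p_N)$; hence $\|u\|_{1,p;k}:=\big(\|u\|_{L^p(X)}^p+\|T_k u\|_{L^p(X,\ell^p_N)}^p\big)^{1/p}$ is the norm on $N^{1,p}(X)$ pulled back from the norm on the uniformly convex space $Z_k:=L^p(X)\times L^p(X,\ell^p_N)$ via the injective linear map $u\mapsto(u,T_k u)$. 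A norm pulled back through an injective linear map into a uniformly convex space is itself uniformly convex, with a modulus of convexity depending \emph{only} on that of $Z_k$; and since the modulus for $L^p$-type spaces depends only on $p$ (not on the underlying measure space), the $\|\cdot\|_{1,p;k}$ are uniformly convex with one common modulus $\delta(\varepsilon)$ independent of $k$.

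First I would record the clean abstract lemma: if $(W,|\cdot|_W)$ is uniformly convex with modulus $\delta_W$ and $\Lambda\colon V\to W$ is an injective linear map, then $\|v\|_V:=|\Lambda v|_W$ is a uniformly convex norm on $V$ with the same modulus $\delta_W$ — this is immediate from the definition since $\|x\|_V=\|y\|_V=1$, $\|x-y\|_V>\varepsilon$ transfer verbatim to $\Lambda x,\Lambda y\in W$. Applying this with $V=N^{1,p}(X)$, $W=Z_k$, and $\Lambda u=(u,T_k u)$ (injective because its first coordinate is), and using that $Z_k$ is a product of two $L^p$-type spaces hence uniformly convex with a modulus $\delta=\delta(p)$ independent of $k$ (combine Lemma~\ref{Lp}, Corollary~\ref{clarkson}, and the standard fact that a finite $\ell^p$-product of uniformly convex spaces is uniformly convex with a modulus depending only on $p$ and the moduli of the factors), we get: for every $\varepsilon>0$ there is $\delta=\delta(\varepsilon,p)>0$ with $\|x+y\|_{1,p;k}\le 2(1-\delta)$ whenever $\|x\|_{1,p;k},\|y\|_{1,p;k}\le 1$ and $\|x-y\|_{1,p;k}>\varepsilon$, uniformly in $k$ — here I would invoke Lemma~\ref{unifconvex-clsd} so that I may work with the closed-ball formulation, which is what passes cleanly to the $\limsup$.

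Next I would pass to the limit $k\to\infty$. The target norm is $\|u\|_{1,p}^*=\big(\|u\|_{L^p}^p+\limsup_k\|T_k u\|^p\big)^{1/p}$, i.e. $(\|u\|_{1,p}^*)^p=\limsup_k(\|u\|_{1,p;k})^p$ after noting $\|u\|_{L^p}$ is the same for all $k$. Fix $\varepsilon>0$; by Proposition~\ref{gradcomparable} and Remark~\ref{liminf-limsup-equiv} the norms $\|\cdot\|_{1,p}^*$ and $\|\cdot\|_{1,p;k}$ are all equivalent to $\|\cdot\|_{N^{1,p}}$ with constants independent of $k$, so I can choose $\varepsilon'>0$ (depending only on $\varepsilon$, $p$, and the doubling/Poincaré constants) such that $\|x-y\|_{1,p}^*>\varepsilon$ forces $\|x-y\|_{1,p;k}>\varepsilon'$ for all $k$ along a subsequence realizing the relevant $\limsup$. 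For such $x,y$ with $\|x\|_{1,p}^*,\|y\|_{1,p}^*\le 1$ one has $\|x\|_{1,p;k},\|y\|_{1,p;k}\le 1$ for every $k$ (since $(\|u\|_{1,p;k})^p\le\limsup_k(\|u\|_{1,p;k})^p=(\|u\|_{1,p}^*)^p$ is \emph{false} in general — so here instead I would pass to a subsequence $k_j$ along which $\|x+y\|_{1,p;k_j}\to$ the quantity governing $(\|x+y\|_{1,p}^*)^p$ and simultaneously $\|x\|_{1,p;k_j},\|y\|_{1,p;k_j}$ stay $\le 1+o(1)$), apply the uniform estimate $\|x+y\|_{1,p;k_j}\le 2(1-\delta(\varepsilon'))$, and take $j\to\infty$ to conclude $\|x+y\|_{1,p}^*\le 2(1-\delta(\varepsilon'))$. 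Then $(N^{1,p}(X),\|\cdot\|_{1,p}^*)$ is uniformly convex, it is a Banach space since $\|\cdot\|_{1,p}^*$ is equivalent to the complete norm $\|\cdot\|_{N^{1,p}}$, and reflexivity follows from the Milman–Pettis theorem (Lemma~\ref{milmanpettis}).

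The main obstacle is exactly the $\limsup$ bookkeeping in the last step: since $\limsup$ does not commute with addition, one cannot naively write $\|x\|_{1,p;k}\le\|x\|_{1,p}^*$ for a single $k$, so the argument must pass to a common subsequence $\{k_j\}$ along which $\|x\|_{1,p;k_j}^p$, $\|y\|_{1,p;k_j}^p$, and $\|x+y\|_{1,p;k_j}^p$ all converge, arrange (via a diagonal choice) that the last converges to $\big(\limsup_k\|T_k(x+y)\|^p\big)+\|x+y\|_{L^p}^p$, and use that the first two limits are $\le(\|x\|_{1,p}^*)^p\le 1$ and $\le(\|y\|_{1,p}^*)^p\le 1$ respectively. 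Once the subsequence is set up, the uniform-in-$k$ modulus $\delta(\varepsilon')$ does all the work and the passage to the limit is routine. A secondary technical point is confirming that a finite $\ell^p$-product of uniformly convex spaces has a modulus depending only on $p$ and the factors' moduli; this is classical and can be cited, or one can simply observe that $Z_k\cong L^p$ of a disjoint union of measure spaces exactly as in the proof of Corollary~\ref{clarkson}, so that Lemma~\ref{Lp} applies directly with modulus $\delta(p)$.
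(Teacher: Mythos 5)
Your proposal follows essentially the same route as the paper: reduce to uniform convexity of $L^p(X,\ell^p_{N+1})$ via the linear map $u\mapsto(u,T_k u)$, exploit that the modulus of convexity is independent of $k$ (Corollary~\ref{clarkson} together with Lemma~\ref{unifconvex-clsd}), use Remark~\ref{liminf-limsup-equiv} to transfer the separation $\|x-y\|_{1,p}^*>\eps$ to a $k$-wise separation, and then pass to the $\limsup$. Two remarks. First, the assertion that each fixed-$k$ quantity $\|u\|_{1,p;k}:=\big(\|u\|_{L^p}^p+\||T_ku|_p\|_{L^p}^p\big)^{1/p}$ is equivalent to $\|u\|_{N^{1,p}}$ with $k$-independent constants is not correct: for a single $k$ the lower bound fails, since $T_ku$ depends only on averages of $u$ over generation-$k$ balls, so $T_ku$ may vanish while $g_u$ does not. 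Only the $\limsup$-version enjoys the two-sided equivalence (Proposition~\ref{gradcomparable}). What actually delivers $\|x-y\|_{1,p;k}>\eps/\xi$ for all sufficiently large $k$ is precisely Remark~\ref{liminf-limsup-equiv}, which you also cite; your conclusion is right but for that reason, not for norm equivalence. Second, the subsequence/diagonal bookkeeping you anticipate is unnecessary: the paper works with the \emph{open}-ball hypotheses $\|u\|_{1,p}^*<1$, $\|v\|_{1,p}^*<1$, and $\|u-v\|_{1,p}^*>\eps$, and then the defining property of $\limsup$ immediately yields $\Phi(f)<1$, $\Phi(g)<1$, and $\Phi(f-g)>\eps/\xi$ simultaneously for all sufficiently large $k$; one applies the uniform modulus (via Lemma~\ref{unifconvex-clsd}) to each such $k$ at once, takes $\limsup_k$, and only at the end upgrades $<1$ to $\le1$ (in fact to $=1$) by the one-line scaling argument $\theta\to1^-$. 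This avoids the diagonal choice of $\{k_j\}$ entirely.
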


\begin{proof}
Fix  $\eps\in(0,\infty)$. We will first prove that there exists $\delta\in(0,\infty)$ such that $\Vert u + v\Vert_{1,p}^*\leq 2(1-\delta)$ whenever $u, v \in N^{1,p}(X)$ satisfy $\Vert u\Vert_{1,p}^*<1$, $\Vert v\Vert_{1,p}^*<1$, and $\Vert u - v\Vert_{1,p}^*>\eps$. Fix $u, v \in N^{1,p}(X)$ as above. Then by definition of $\Vert\cdot\Vert_{1,p}^*$, we have that
\begin{equation}
\label{xwui-29}
\left(\|u\|_{L^p(X)}^p + \big\Vert|T_{k} u|_p\big\Vert_{L^p(X)}^p\right)^{1/p} <1
\qquad
\text{and}
\qquad\left(\|v\|_{L^p(X)}^p + \big\Vert|T_{k} v|_p\big\Vert_{L^p(X)}^p\right)^{1/p} < 1
\end{equation}
for all sufficiently large $k\in\bbbn$.  In light of Remark~\ref{liminf-limsup-equiv}, we can estimate  
\begin{align*}
\eps<\Vert u - v\Vert_{1,p}^* & \le \left(\|u-v\|_{L^p(X)}^p + \xi^p \liminf_{k\to\infty} \big\Vert|T_{k}(u-v)|_p\big\Vert_{L^p(X)}^p\right)^{1/p} \\
 & \le \xi \left(\|u-v\|_{L^p(X)}^p + \liminf_{k\to\infty} \big\Vert|T_{k}u-T_{k}v|_p\big\Vert_{L^p(X)}^p\right)^{1/p},
\end{align*}
where we have used the fact that $\xi\ge1$ and $T_k$ is linear in obtaining the last inequality. Consequently, \eqref{xwui-29} and
\begin{equation}
\label{xwui-30}
\big(\|u-v\|_{L^p(X)}^p + \big\Vert|T_{k}u -T_{k}v|_p\big\Vert_{L^p(X)}^p\big)^{1/p} > \eps/\xi,
\end{equation}
hold true for all sufficiently large $k\in\bbbn$. Fix such a $k$. Since $T_{k}u$ and $T_{k}v$ are vectors in $\mathbb{R}^N$, we can write
$$
\big\Vert|T_{k}u|_p\big\Vert_{L^p(X)}^p=\sum_{\ell=1}^N\big\Vert T^\ell_{k}u\big\Vert_{L^p(X)}^p
\quad
\mbox{and}
\quad
\big\Vert|T_{k}v|_p\big\Vert_{L^p(X)}^p=\sum_{\ell=1}^N\big\Vert T^\ell_{k}v\big\Vert_{L^p(X)}^p,
$$
where $T_{k}u=(T^1_{k}u,\dots,T^N_{k}u)$ and $T_{k}v=(T^1_{k}v,\dots,T^N_{k}v)$. 
Therefore, if we let
$$
f:=(u,T^1_{k}u,\dots,T^N_{k}u)\qquad\mbox{and}\qquad
g:=(v,T^1_{k}v,\dots,T^N_{k}v),
$$
then $f,g\in L^p(X,\ell^p_{N+1})$ and, with $\Phi$ defined as in \eqref{reoi}, a rewriting of \eqref{xwui-29} and \eqref{xwui-30} yields
\begin{equation*}
\begin{gathered}
\Phi(f)=\left(\|u\|_{L^p(X)}^p + \big\Vert|T_{k} u|_p\big\Vert_{L^p(X)}^p\right)^{1/p}<1,
\quad
\Phi(g)=\left(\|v\|_{L^p(X)}^p + \big\Vert|T_{k} v|_p\big\Vert_{L^p(X)}^p\right)^{1/p} < 1\\
\mbox{and}
\quad
\Phi(f-g)=\left(\|u-v\|_{L^p(X)}^p + \big\Vert|T_{k}u - T_{k}v|_p\big\Vert_{L^p(X)}^p\right)^{1/p}>\eps/\xi.
\end{gathered}
\end{equation*}
By Corollary~\ref{clarkson}, $L^p(X,\ell^p_{N+1})$ is uniformly convex and so (keeping in mind Lemma~\ref{unifconvex-clsd}) there exists $\delta\in(0,\infty)$, which depends on $\varepsilon$ and $\xi$, but is independent of $f$ and $g$ (in particular, $\delta$ is independent of $u$, $v$, and $k$), such that 
\begin{equation}
\label{gamq-58}
\left(\|u+v\|_{L^p(X)}^p + \big\Vert|T_{k} (u+v)|_p\big\Vert_{L^p(X)}^p\right)^{1/p} = \Phi(f+g) \le 2(1-\delta).
\end{equation}
Note that we have used the linearity of $T_{k}$ in obtaining the equality in \eqref{gamq-58}. Given that \eqref{gamq-58} holds for all sufficiently large $k\in\bbbn$, it follows that $\Vert u+v\Vert_{1,p}^* \le 2(1-\delta)$. 

To complete the proof of the proposition, suppose that $u, v \in N^{1,p}(X)$ are such that $\Vert u\Vert_{1,p}^*=\Vert v\Vert_{1,p}^*=1$, and $\Vert u - v\Vert_{1,p}^*>\eps$. Then, for all $\theta\in(0,1)$ sufficiently close to 1, we have that $\theta u, \theta v \in N^{1,p}(X)$ satisfy $\Vert\theta u\Vert_{1,p}^*,\Vert\theta v\Vert_{1,p}^*<1$ and $\Vert\theta u - \theta v\Vert_{1,p}^*>\eps$. As such,  we have $\Vert \theta u + \theta v\Vert_{1,p}^*\leq 2(1-\delta)$ by what has been established above. Since $\delta$ is independent of $\theta$, passing to the limit as $\theta\to1^-$ yields $\Vert u + v\Vert_{1,p}^*\leq 2(1-\delta)$. Given that $\eps\in(0,\infty)$ was arbitrary, it follows that $\Vert\cdot\Vert_{1,p}^*$ is a uniformly convex norm on $N^{1,p}(X)$.

Finally, the assertion that $(N^{1,p}(X),\|\cdot\|_{N^{1,p}(X)})$ is reflexive follows as an immediate consequence of the Milman--Pettis theorem (see Lemma~\ref{milmanpettis}) and the fact that a reflexive space remains reflexive for an equivalent norm. The proof of Proposition~\ref{F-UniCon} is now complete.
\end{proof} 
\noindent This completes the proof of Theorem~\ref{main}.
\end{proof}

\section{Separability from reflexivity}
\label{sepa}

In this section we will prove  separability of $N^{1,p}(X)$ for $p\in[1,\infty)$ (Theorem~\ref{apes}). In its proof we will employ a general result that provides  a mechanism for using reflexivity to establish separability, see Proposition~\ref{reflextosep}. Recall that we always assume that the measure on $X$ is doubling and Borel regular.

Throughout this section, all vector spaces are over the field of real numbers. Also, as a notational convention, if $S$ is a set of vectors then we let $\Span_{\mathbb{Q}}S$ and $\Span S$ denote the set of all finite linear combinations of vectors in $S$ with coefficients in $\bbbq$ and $\bbbr$, respectively.

\begin{proposition}
\label{reflextosep}
If $T\colon V\to W$ is a linear and bounded injective map of a reflexive Banach space $V$ into a separable normed space $W$, then $V$ is separable.
\end{proposition}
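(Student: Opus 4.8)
The plan is to deduce separability of $V$ from the separability of its image $T(V) \subseteq W$, which is automatic since subsets of separable metric spaces are separable. The key point is that an injective bounded linear map from a \emph{reflexive} space, while not necessarily having closed range, still transfers separability backwards. I would proceed by a weak-compactness argument as follows.

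First, let $\{w_n\}_{n=1}^\infty$ be a countable dense subset of $T(V)$, and for each $n$ choose $v_n \in V$ with $Tv_n = w_n$. I claim $D := \Span_{\mathbb{Q}}\{v_n\}$ is dense in $V$. Suppose not; then there is $v \in V$ and $r > 0$ with $\overline{B}(v,r) \cap \overline{D} = \varnothing$. Since $\overline{D}$ is a closed subspace not containing $v$, by Hahn--Banach there is a functional $\varphi \in V^*$ with $\varphi \equiv 0$ on $\overline{D}$ and $\varphi(v) \neq 0$. Now the obstacle: I need to derive a contradiction from $\varphi(v_n) = 0$ for all $n$ together with injectivity of $T$, and this is exactly where reflexivity enters. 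The naive hope — that $\{v_n\}$ is dense because $\{w_n\}$ is dense and $T$ is injective — fails without further input, since $T^{-1}$ need not be continuous.

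The mechanism I expect to use: take $v \in V$ arbitrary. Pick $w_{n_j} \to Tv$ in $W$, so $v_{n_j}$ satisfies $Tv_{n_j} \to Tv$. I would like to say $v_{n_j}$ converges to $v$ in some sense, but the $v_{n_j}$ need not even be bounded. To fix this, I would instead work with the functional $\varphi$ from the Hahn--Banach step and aim to show $\varphi = 0$, contradicting $\varphi(v) \neq 0$. Consider $v \in V$ with $\varphi(v) \neq 0$; scaling, assume $\varphi(v) = 1$. Since $Tv \in T(V) = \overline{\{w_n\}}^{\,T(V)}$, there is a sequence from $\Span_{\mathbb{Q}}\{w_n\}$ converging to $Tv$; the corresponding preimages $u_j \in D$ satisfy $Tu_j \to Tv$ and $\varphi(u_j) = 0$. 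The real content is: \emph{must $\{u_j\}$ have a subsequence converging weakly to $v$?} Here reflexivity of $V$ gives that bounded sequences have weakly convergent subsequences — so the crux is boundedness. I would obtain boundedness by a careful choice: since $T$ is bounded and injective but possibly not bounded below, one cannot choose $u_j$ bounded in general. The correct route, which I expect to be the main technical step, is to consider the closed subspace $Z := \overline{D} \subseteq V$, which is itself reflexive, and show that $T|_Z : Z \to W$ is still injective with dense-enough image; then apply the argument inductively or observe that $v \notin Z$ forces $Tv \notin \overline{T(Z)}$ by a quotient/open-mapping consideration on $V/Z$ — but $V/Z$ need not be finite-dimensional, so this also needs care.

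Given these difficulties, the cleanest plan I would actually carry out: use that a reflexive Banach space $V$ is separable if and only if $V^*$ is weak$^*$-separable, or more directly, invoke that $V$ separable $\iff$ the weak topology on bounded sets is metrizable — no, simplest of all: show $V^*$ is separable and conclude $V$ is separable (valid since $V$ reflexive implies $V = V^{**}$, and $V^{**}$ separable would follow, but we want $V$ itself). Concretely: the adjoint $T^* : W^* \to V^*$ has dense range in $V^*$ in the weak$^*$ topology because $T$ is injective (so $T^*$ has weak$^*$-dense range by a standard duality: $\ker(T)^\perp = \overline{\operatorname{ran}(T^*)}^{\,w^*}$, and $\ker T = 0$ gives $\operatorname{ran}(T^*)$ weak$^*$-dense). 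Since $W$ is separable, $W^*$ is weak$^*$-separable, hence $V^*$ is weak$^*$-separable. For a reflexive space, weak$^*$-separability of $V^*$ equals weak-separability of $V^*$, which by Mazur's theorem equals norm-separability of $V^*$; and norm-separability of $V^*$ implies norm-separability of $V$. This chain is the approach I would write up, and the step I expect to need the most care in stating precisely is the passage $\operatorname{ran}(T^*)$ weak$^*$-dense $\implies$ $V^*$ weak$^*$-separable $\implies$ $V^*$ norm-separable, where reflexivity is used to convert weak$^*$ to weak and then Mazur converts weak to norm.
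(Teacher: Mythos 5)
Your final plan (via the adjoint) is correct, but it takes a genuinely different route from the paper. You correctly abandon the first Hahn--Banach sketch -- as you observe, without a bound on the preimages one cannot extract a weakly convergent subsequence, and this is precisely the issue the paper circumvents. The paper's own proof works entirely in the primal space $V$: it restricts to the \emph{unit ball} (so that preimages of a countable dense set in $T(B)$ are automatically bounded), extracts a weak limit $\tilde v$ using reflexivity, pushes convex combinations to the norm limit via Mazur's lemma, and then uses continuity and injectivity of $T$ to identify $\tilde v$ with the target $v$. Your proof instead dualizes: injectivity of $T$ gives $\operatorname{ran}(T^*)$ weak$^*$-dense in $V^*$ by the annihilator identity; separability of $W$ gives weak$^*$-separability of $W^*$ via Alaoglu and weak$^*$-metrizability of bounded sets; weak$^*$-continuity of $T^*$ transports this to $V^*$; reflexivity upgrades weak$^*$ to weak; Mazur's theorem (closed convex sets are weakly closed) upgrades weak-separability of $V^*$ to norm-separability; and the standard implication $V^*$ separable $\Rightarrow V$ separable (which holds for any normed space, not only reflexive ones, so your parenthetical there is unnecessarily cautious) finishes. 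Both arguments are valid; the paper's is more self-contained -- it needs only weak sequential compactness of bounded sets in a reflexive space and Mazur's lemma -- whereas yours assembles Alaoglu, the metrizability criterion, the annihilator duality for adjoints, and two separate uses of separability transfer, but in exchange is a one-line chain once those facts are in hand.
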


\begin{proof}
It suffices to prove that the unit ball $B\subseteq V$ is separable. Given that $W$ is separable, there is a set $\{v_k: k\in\mathbb{N}\}\subseteq B$ such that the set $\{T(v_k): k\in\mathbb{N}\}$ is dense in $T(B)$. Since $\Span_{\mathbb{Q}}\{v_k: k\in\mathbb{N}\}\subseteq\Span\{v_k: k\in\mathbb{N}\}$ is countable and dense, we conclude that $\Span\{v_k: k\in\mathbb{N}\}$ is separable and hence, it suffices to prove that
\begin{equation}
\label{tqi-348}
\Span\{v_k: k\in\mathbb{N}\}\cap B\,\,\mbox{ is dense in $B$.}
\end{equation}
Let $v\in B$. Then, there exists a sequence $\{v_{k_i}\}_i\subseteq B$ such that $T(v_{k_i})\to T(v)$ in $W$ as $i\to\infty$. Since $\{v_{k_i}\}_i$ is bounded in $V$ and $V$ is reflexive, by passing to a subsequence, we can assume that $\{v_{k_i}\}_i$ converges weakly in $V$ to some $\tilde{v}\in V$. Then Mazur's lemma yields a sequence of convex combinations that converge to $\tilde{v}$ in the norm on $V$:
\begin{equation}
\label{rhq-26}
\Span\{v_k: k\in\mathbb{N}\}\cap B\ni\sum_{j=i}^{L(i)}\alpha_{i,j}v_{k_j}\to \tilde{v}\,\,\,\mbox{ in $V$ as $i\to\infty$,}
\end{equation}
where $\alpha_{i,j}\geq 0$ and $\sum_{j=i}^{L(i)}\alpha_{i,j}=1$ with $L(i)\in\bbbn$. Appealing to the boundedness and linearity of $T$, we have
$$
T(\tilde{v})=\lim_{i\to\infty}T\Bigg(\sum_{j=i}^{L(i)}\alpha_{i,j}v_{k_j}\Bigg)
=\lim_{i\to\infty}\sum_{j=i}^{L(i)}\alpha_{i,j}T(v_{k_j})=T(v).
$$
Since $T$ is injective, we conclude that $\tilde{v}=v$ and \eqref{tqi-348} now follows from \eqref{rhq-26} because $v\in B$ was chosen arbitrarily.
\end{proof}

\begin{lemma}
\label{lipdensity}
Suppose that the space $(X,d,\mu)$ supports a $p$-Poincar\'e inequality for some $p\in [1,\infty)$. Then
the space $\operatorname{Lip}_b(X)$ of Lipschitz functions with bounded support
is a dense subset of $N^{1,p}(X)$.
\end{lemma}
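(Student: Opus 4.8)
The plan is to show that an arbitrary $u\in N^{1,p}(X)$ can be approximated in the $N^{1,p}$-norm by Lipschitz functions with bounded support, proceeding in three stages: truncation, localization to a bounded set, and mollification by a discrete convolution adapted to the metric space. First I would truncate: replacing $u$ by $u_M:=\max\{-M,\min\{M,u\}\}$ only decreases (pointwise a.e.) the minimal $p$-weak upper gradient and $u_M\to u$ in $L^p(X)$ by dominated convergence, while $g_{u_M}\le g_u\chi_{\{|u|<M\}}\to 0$ on the set where $|u|\ge M$, so $\|u-u_M\|_{N^{1,p}(X)}\to 0$. Hence it suffices to approximate bounded functions in $N^{1,p}(X)$. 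Next, localize: fix a point $x_o\in X$ and let $\eta_R$ be a Lipschitz cutoff that equals $1$ on $B(x_o,R)$, vanishes outside $B(x_o,2R)$, and has $\operatorname{lip}\eta_R\lesssim 1/R$. By Lemma~\ref{leibniz}, $\eta_R u\in N^{1,p}(X)$ with $p$-weak upper gradient $h_R=|\eta_R|g_u+|u|\operatorname{lip}\eta_R\le g_u+\tfrac{M}{R}\chi_{B(x_o,2R)}$. Since $u\in L^p$ and $g_u\in L^p$, one checks $\|u-\eta_R u\|_{L^p}\to 0$ and $\|g_u-h_R\|_{L^p}\to 0$ as $R\to\infty$ (the term $\tfrac{M}{R}\mu(B(x_o,2R))^{1/p}$ need not vanish for general doubling $\mu$, so instead I would use the sharper bound: on $B(x_o,2R)\setminus B(x_o,R)$ we can replace the crude cutoff by one built from the actual function, or simply observe that it suffices to make $\|g_u-h_R\|_{L^p}$ small and absorb the cutoff term into the final mollification error after first fixing $R$ large enough that $\int_{X\setminus B(x_o,R)}(|u|^p+g_u^p)\,d\mu$ is small). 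So it is enough to approximate a bounded function $w\in N^{1,p}(X)$ with bounded support.

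For such $w$, the core construction is a discrete partition-of-unity mollification at generation $k$. Using the coverings $\{B_i^k\}_i$ of radius $2^{-k}$ from Section~\ref{uc}, build a Lipschitz partition of unity $\{\varphi_i^k\}_i$ subordinate to $\{2B_i^k\}_i$ with $\operatorname{lip}\varphi_i^k\lesssim 2^k$ and $\sum_i\varphi_i^k\equiv 1$ (this is standard: take $\varphi_i^k=\psi_i^k/\sum_j\psi_j^k$ where $\psi_i^k(x)=\max\{0,1-2^k\dist(x,B_i^k)\}$, using bounded overlap of the $\{2B_i^k\}_i$ for the Lipschitz bound on the denominator), and set
$$
w_k:=\sum_{i}w_{2B_i^k}\,\varphi_i^k.
$$
Then $w_k$ is locally Lipschitz, and has bounded support once $w$ does (only finitely many $B_i^k$ meet $\supp w$, again by a volume/bounded-overlap argument). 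The key estimates are: (i) $\|w_k-w\|_{L^p(X)}\to 0$, which follows from $|w_k(x)-w(x)|\le\sum_{i:\,x\in 2B_i^k}\varphi_i^k(x)|w_{2B_i^k}-w(x)|\lesssim \mvint_{CB^k[x]}|w-w(x)|\,d\mu$ and Lebesgue differentiation (Lemma~\ref{T5}) plus dominated convergence; and (ii) the pointwise lower Lipschitz constant satisfies
$$
\operatorname{lip}w_k(x)\lesssim 2^k\!\!\sum_{i,j:\,x\in 2B_i^k,\,2B_j^k\cap 2B_i^k\ne\varnothing}\!\!|w_{2B_i^k}-w_{2B_j^k}| \;\approx\; |T_k w(x)|
$$
(the constant depending only on the doubling constant), because $\sum_i\varphi_i^k\equiv 1$ lets us subtract off $w_{2B_i^k}$ inside the gradient, turning $\operatorname{lip}w_k$ into a sum of $\operatorname{lip}\varphi_j^k$ times differences of nearby averages, exactly the quantity controlled by $T_kw$. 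By Lemma~\ref{TkBDD} this gives $\|\operatorname{lip}w_k\|_{L^p(X)}\lesssim\|g_w\|_{L^p(X)}$ uniformly in $k$, and Lemma~\ref{T1} says $\operatorname{lip}w_k$ is an upper gradient of $w_k$, so $\{w_k\}$ is bounded in $N^{1,p}(X)$ with $w_k\to w$ in $L^p(X)$.

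To upgrade $L^p$-convergence of $w_k$ to $N^{1,p}$-convergence of a suitable (sub)sequence I would invoke the machinery already built in Section~\ref{uc}: by Corollary~\ref{cor:subseq-T_k-wkconv} (or directly by reflexivity of $L^p$ when $p>1$, and by Lemma~\ref{lem:Tku-equiint-balls} / Dunford--Pettis when $p=1$) pass to a subsequence along which $\operatorname{lip}w_k$ converges weakly in $L^p$ to some $h$; since $w_k\to w$ in $L^p$ and each $\operatorname{lip}w_k$ is an upper gradient of $w_k$ (verified, as in the proof of Lemma~\ref{pro:T_ku-converges-balls}, on $\operatorname{Mod}_p$-a.e.\ curve via Fuglede's lemma applied to Mazur convex combinations), $h$ is a $p$-weak upper gradient of $w$, hence $h\ge g_w$ a.e. Then $\|g_w\|_{L^p}\le\|h\|_{L^p}\le\liminf_k\|\operatorname{lip}w_k\|_{L^p}$ by weak lower semicontinuity. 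Finally, a standard argument with the Mazur convex combinations $\tilde w_k:=\sum_{\ell=k}^{L(k)}\alpha_{k,\ell}w_\ell$ (which are still Lipschitz with bounded support, still converge to $w$ in $L^p$, and have $\operatorname{lip}\tilde w_k\le\sum\alpha_{k,\ell}\operatorname{lip}w_\ell$ so that $\operatorname{lip}\tilde w_k\to h$ in $L^p$ along the chosen subsequence) gives $\limsup_k\|g_{\tilde w_k - w}\|_{L^p}\le\limsup_k\|\operatorname{lip}\tilde w_k - h\|_{L^p} + \|h - g_w\|_{L^p}$; combined with $\|h\|_{L^p}=\|g_w\|_{L^p}$ (forced by the two inequalities above together with $\operatorname{lip}\tilde w_k\to h$) and the fact that in a uniformly convex space norm-plus-weak convergence implies strong convergence — applied in $L^p$, $p>1$ — one gets $\operatorname{lip}\tilde w_k\to g_w$ strongly, hence $\tilde w_k\to w$ in $N^{1,p}(X)$. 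The main obstacle is precisely this last upgrade: proving that $g_{\tilde w_k}\to g_w$ in $L^p$ rather than merely $\limsup_k\|g_{\tilde w_k}\|_{L^p}\le\|g_w\|_{L^p}$; for $p>1$ uniform convexity of $L^p$ closes the gap, while for $p=1$ one must instead argue directly that $g_w$ itself (not just $h\ge g_w$) is the $L^1$-weak limit, using that $w_k$ is genuinely Lipschitz so $\operatorname{lip}w_k$ is a \emph{bona fide} upper gradient and the Poincaré inequality forces the limit gradient down to $g_w$ — this is where the hypothesis that $X$ supports a $p$-Poincaré inequality is essential, and where I expect to spend most of the work.
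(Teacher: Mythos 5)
The paper does not prove this lemma; it simply cites \cite[Corollary~5.15]{BjoBjo} and \cite[Theorem~8.2.1]{HKST}, where the result is established by a quite different route: one derives a Haj\l{}asz-type pointwise estimate $|u(x)-u(y)|\lesssim d(x,y)\bigl[(\mathscr{M}g_u^p(x))^{1/p}+(\mathscr{M}g_u^p(y))^{1/p}\bigr]$ from the Poincar\'e inequality, restricts $u$ to the set where the maximal function is $\le\lambda$, extends by McShane to a globally $C\lambda$-Lipschitz function, and lets $\lambda\to\infty$ using the weak-type estimate $\lambda^p\mu(\{\mathscr{M}g_u^p>\lambda^p\})\to0$. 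Your discrete-convolution mollification is thus a genuinely different strategy, and several of its ingredients are sound; in particular your worry about the localization step is unnecessary, since one should bound $|u|\,\operatorname{lip}\eta_R\le |u|/R$ rather than $M/R$, and then $\||u|/R\|_{L^p}=\|u\|_{L^p}/R\to0$ regardless of the volume growth of balls.

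However, the final convergence step has a genuine gap. From (i) $w_k\to w$ in $L^p$ and (ii) $\|\operatorname{lip}w_k\|_{L^p}\lesssim\|g_w\|_{L^p}$ you have only \emph{boundedness} of $\{w_k\}$ in $N^{1,p}(X)$, which does not yield $N^{1,p}$-convergence. The weak limit $h$ of $\operatorname{lip}w_k$ (or of its Mazur combinations) is some $p$-weak upper gradient of $w$, hence $h\ge g_w$ a.e., but the upper bound you possess, $\liminf_k\|\operatorname{lip}w_k\|_{L^p}\le C\|g_w\|_{L^p}$, has a constant $C$ that is strictly larger than $1$ (it comes from Lemma~\ref{TkBDD} and the Poincar\'e and doubling constants). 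Nothing in your argument forces $\|h\|_{L^p}=\|g_w\|_{L^p}$, so the conclusion that $\operatorname{lip}\widetilde w_k\to g_w$ in $L^p$-norm, and hence that $\widetilde w_k\to w$ in $N^{1,p}$, is unsubstantiated. Making the discrete-convolution approach give strong $N^{1,p}$-convergence is precisely the kind of $\Gamma$-/Mosco-convergence argument of Ambrosio--Colombo--Di Marino and Durand-Cartagena--Shanmugalingam that this paper is designed to circumvent.

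There is a cleaner fix for $p>1$ that you almost reach: once $N^{1,p}(X)$ is known to be reflexive (Proposition~\ref{F-UniCon}) --- note this lemma is used only \emph{after} that point in the paper --- boundedness of $\{w_k\}$ in $N^{1,p}$ plus $w_k\to w$ in $L^p$ forces a subsequence $w_{k_j}\rightharpoonup w$ weakly in $N^{1,p}$; Mazur's lemma then gives convex combinations of the $w_{k_j}$ that converge strongly in $N^{1,p}$, and convex combinations of Lipschitz functions with bounded support are again Lipschitz with bounded support. Apply the uniform-convexity/Mazur argument at the level of $N^{1,p}$, not $L^p$. This does not resolve $p=1$, where $N^{1,1}$ is not reflexive and where your sketch only gestures at a resolution; since the paper subsequently uses this lemma precisely in the $p=1$ case (in the proof of Theorem~\ref{apes}), the gap is consequential.
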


For a proof see \cite[Corollary~5.15]{BjoBjo}. In fact, they proved density of compactly supported Lipschitz functions under the additional assumption that the space $X$ is complete. Without assuming completeness of  $X$, the same proof gives density of Lipschitz functions with bounded support. Completeness of $X$, since the space is equipped with a doubling measure, implies that bounded and closed sets are  compact  and hence Lipschitz functions with bounded support have compact support.

Lemma~\ref{lipdensity} follows also from Theorem~8.2.1 and the proof of Proposition~7.1.35 in \cite{HKST}.

We are now ready to present the

\begin{proof}[Proof of Theorem~\ref{apes}]
Suppose first that $p>1$. Note that $(X,d)$ is a separable metric space since $\mu$ is a doubling measure on $X$ and so, $L^p(X)$ is separable by \cite[Proposition~3.3.55]{HKST}. Clearly, the identity mapping $\iota\colon N^{1,p}(X)\to L^p(X)$ is a linear and bounded injective map. Now, since  the space $N^{1,p}(X)$ is reflexive by Proposition~\ref{F-UniCon}, Proposition~\ref{reflextosep} immediately implies that $N^{1,p}(X)$ is separable.

Suppose next that $p=1$ and fix $q\in(1,\infty)$. 
It follows from H\"older's inequality that $X$ supports a $q$-Poincar\'e inequality and by what we have already shown, $N^{1,q}(X)$ is separable so, there is a dense subset $\{\psi_i:\, i\in\bbbn\}$ of $N^{1,q}(X)$.

Fix $x_o\in X$ and for each $k\in\bbbn$ choose a Lipschitz function with bounded support $\eta_k\in\operatorname{Lip}_b(X)$ such that $\eta_k\equiv 1$ on $B(x_o,k)$. We will prove that $\mathcal{F}:=\{\eta_k\psi_i:\, k,i\in\bbbn\}$ is a dense subset of $N^{1,1}(X)$.

We first need to show that $\mathcal{F}\subseteq N^{1,1}(X)$. Fix $k,i\in\mathbb{N}$. It follows from Lemma~\ref{leibniz} that $\eta_k\psi_i\in N^{1,q}(X)$ and $h_{k,i}:=|
\eta_k|g_{\psi_i}+|\psi_i|\,{\rm lip}\,\eta_k$ is a $q$-weak (hence, also $1$-weak by Lemma~\ref{except}) upper gradient for $\eta_k\psi_i$, where $g_{\psi_i}\in L^q(X)$ is the minimal $q$-weak upper gradient for $\psi_i\in L^q(X)$. Since $\eta_k\in{\rm Lip}_b(X)$, it follows from \eqref{lillip} that ${\rm lip}\,\eta_k$ is a bounded function with bounded support. Therefore, by H\"older's inequality we can conclude that $\eta_k\psi_i, h_{k,i}\in L^{1}(X)$ and hence, $\eta_k\psi_i\in N^{1,1}(X)$, as wanted.

In light of Lemma~\ref{lipdensity} it suffices to prove that any Lipschitz function with bounded support  can be approximated in the $N^{1,1}$ norm by functions in $\mathcal{F}$.
Fix $u\in\operatorname{Lip}_b(X)$ and let $k_o\in\bbbn$ be such that $\supp u\subseteq B(x_o,k_o)$, so $\eta_{k_o}u=u$ pointwise in $X$. Since $u\in N^{1,q}(X)$, there is a sequence $\{\psi_{i_j}\}_j$ such that $\psi_{i_j}\to u$ in $N^{1,q}(X)$ as $j\to\infty$. Then it easily follows from Lemma~\ref{leibniz} that
$$
u-\eta_{k_o}\psi_{i_j}=\eta_{k_o}(u-\psi_{i_j})\to 0
\quad
\text{in $N^{1,1}(X)$ as $j\to\infty$.}
$$
This completes the proof of Theorem~\ref{apes}.
\end{proof}

\section{Pointwise estimates}
\label{ptwise}

The purpose of this section is to prove Theorem~\ref{main2}. In order to do so, it suffices to prove the following theorem.
\begin{theorem}
\label{ptwisethm}
Fix $p\in(1,\infty)$ and suppose that $X$ supports a $q$-Poincar\'e inequality for some $q\in[1,p)$. Then there exists a constant  $C\geq 1$ such that for all $u\in N^{1,p}(X)$,
\begin{equation}
\label{rqi-34}
C^{-1}g_u(x)\leq\limsup_{k\to\infty} |T_{k} u(x)|\leq Cg_u(x)\quad
\mbox{for $\mu$-a.e. $x\in X$,}
\end{equation}
where $g_u\in L^p(X)$ denotes the minimal $p$-weak upper gradient of $u$.
\end{theorem}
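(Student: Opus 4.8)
The plan is to establish the two inequalities in \eqref{rqi-34} separately, with the upper bound following quickly from the auxiliary estimates already in hand, while the lower bound is the genuine obstacle. For the upper bound, I would apply Lemma~\ref{TkBDD} with the $q$-Poincar\'e inequality and the pair $(u,g_u)$, where $g_u$ is the minimal $p$-weak upper gradient of $u$ (note $g_u$ is in particular a $q$-weak upper gradient, so \eqref{tkpwest} applies after using the equivalence of $|\cdot|$ and $|\cdot|_q$, or simply working with $q$). This gives $|T_k u(x)| \lesssim \big(\mvint_{5\lambda B[x]} g_u^q\,d\mu\big)^{1/q}$ for every $x$ and every $k$. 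Since the radii of the balls $B[x]$ shrink to zero as $k\to\infty$, the right-hand side converges for $\mu$-a.e.\@ $x$ to $\big((g_u^q)(x)\big)^{1/q} = g_u(x)$ by the Lebesgue differentiation theorem (Lemma~\ref{T5}) applied to $g_u^q \in L^{p/q}_{\loc}(X) \subseteq L^1_{\loc}(X)$. Taking $\limsup_{k\to\infty}$ yields $\limsup_k |T_k u(x)| \le C g_u(x)$ for $\mu$-a.e.\@ $x$.

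For the lower bound, I would argue by contradiction at the level of a measurable set. Suppose the set $E$ where $\limsup_k |T_k u(x)| < C^{-1} g_u(x)$ has positive measure for every $C$; passing to a suitable $C$ and a subset, one gets $G := \limsup_k |T_k u| < (1/C) g_u$ on a set $E$ of positive measure, while $G \le C' g_u$ everywhere by the upper bound. The idea is to show that $G$ (or a slightly enlarged multiple) is itself a $p$-weak upper gradient of $u$ that is strictly smaller than $g_u$ on a set of positive measure, contradicting minimality. To produce a genuine upper gradient from the pointwise $\limsup$, I would pass along a subsequence: by Corollary~\ref{cor:subseq-T_k-wkconv} (applied with exponent $q$, but $|T_k u|_q$ is also in $L^p$ here by Lemma~\ref{TkBDD} since $g_u\in L^p$), extract a subsequence $|T_{k_j} u|$ converging weakly in $L^p$ to some $h$; then Lemma~\ref{almostug} and Lemma~\ref{pro:T_ku-converges-balls} show $4h$ is a $p$-weak upper gradient, hence $4h \ge g_u$ a.e. The subtlety is that the weak limit $h$ can be strictly larger than the pointwise $\limsup$ $G$, so this alone only recovers the weak comparison of Proposition~\ref{gradcomparable}, not the pointwise one.

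The main obstacle, therefore, is upgrading the weak-limit comparison to a pointwise one, and here completeness of $X$ enters. The natural tool is the Lebesgue differentiation theorem for the minimal upper gradient together with a localization argument: for $\mu$-a.e.\@ $x$, one restricts attention to the behavior of $u$ on small balls around $x$, uses that $g_u \chi_{B(x,r)}$ is (up to lower-order terms controlled by truncation) an upper gradient of $u$ on that ball, and — crucially — uses a \emph{Poincar\'e-type lower bound}: completeness plus the $q$-Poincar\'e inequality implies $X$ supports a better (e.g.\@ $q'$-Poincar\'e for some $q' < q$) inequality by the self-improvement theorem of Keith--Zhong, and, more to the point, gives a converse-type control showing that the oscillation captured by $T_k u$ near a Lebesgue point of $g_u$ is bounded \emph{below} by a fixed fraction of $g_u(x)$. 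Concretely, I expect the argument to run: fix a Lebesgue point $x$ of $g_u$ and a density point; if $\limsup_k|T_k u(x)|$ were much smaller than $g_u(x)$, then on a definite proportion of balls $B_i^{k}$ meeting a small neighborhood of $x$ the averaged differences $2^{k}|u_{B_i^k}-u_{B_{i,j}^k}|$ are small; chaining these small differences along curves (as in Lemma~\ref{almostug}) would show $u$ has a $p$-weak upper gradient comparable to $\limsup_k|T_k u|$ near $x$, which near $x$ is $< (1/C) g_u$, contradicting the minimality of $g_u$ once $C$ is large enough relative to the Poincar\'e and doubling constants. I would localize this via the Leibniz rule (Lemma~\ref{leibniz}) with a cutoff supported near $x$ to glue the local upper gradient into a global one, obtaining $\min\{g_u, C \limsup_k |T_k u|\}$ — or a small perturbation thereof — as a global $p$-weak upper gradient, forcing $g_u \le C \limsup_k|T_k u|$ a.e. This last gluing-and-minimality step, and the precise extraction of the pointwise lower bound from the chaining estimate, is where the real work lies.
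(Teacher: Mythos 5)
Your upper bound is essentially the paper's argument: apply the estimate \eqref{tkpwest} from Lemma~\ref{TkBDD} with the $q$-Poincar\'e inequality and $g_u$ (which is in particular a $q$-weak upper gradient, by Lemma~\ref{except}), then invoke Lebesgue differentiation at Lebesgue points of $g_u^q$. That part is fine.

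For the lower bound you correctly identify the obstacle — a weak $L^p$ limit of $|T_{k_j}u|$ need not equal, and can strictly exceed, the pointwise $\limsup$ — but your proposed workaround (localization with cutoffs, Leibniz rule gluing, chaining, and an appeal to Keith--Zhong) is not viable as sketched and misses the paper's much simpler idea. Note first that Keith--Zhong is not needed \emph{inside} this theorem: the hypothesis already grants a $q$-Poincar\'e inequality with $q<p$, and Keith--Zhong is only used to deduce Theorem~\ref{main2} from Theorem~\ref{ptwisethm}. The decisive trick the paper uses is to feed Lemma~\ref{pro:T_ku-converges-balls} not with $4|T_ku|_p$ but with the running supremum $h_k:=4\sup_{j\geq k}|T_ju|_p$, and $h:=4\limsup_{k\to\infty}|T_ku|_p$. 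The inequality $|S_ku(x)-S_ku(y)|\leq\int_\gamma h_k\,ds$ still follows from Lemma~\ref{almostug} because $h_k\geq 4|T_ku|_p$. And now $\{h_k\}$ decreases pointwise to $h$; moreover \eqref{tkpwest} (with exponent $q$) plus the definition of the maximal function give $h_k\lesssim(\mathscr{M}g_u^q)^{1/q}$ uniformly in $k$, and since $g_u^q\in L^{p/q}$ with $p/q>1$, boundedness of the maximal operator (Lemma~\ref{T4}) puts this dominating function in $L^p$. Dominated convergence then upgrades the pointwise convergence $h_k\to h$ to convergence in $L^p$ norm, hence weak convergence, so Lemma~\ref{pro:T_ku-converges-balls} applies and yields $h\geq g_u$ a.e.\ — exactly the pointwise lower bound. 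This is where the strict inequality $q<p$ is essential, and it is the step your proposal lacks.
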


Indeed, Theorem~\ref{ptwisethm} and the following deep result due to Keith and Zhong \cite{KeithZhong} 
(see also \cite{Er}, \cite[Theorem~12.3.9]{HKST})
immediately yield Theorem~\ref{main2}.

\begin{lemma}[Keith and Zhong]
\label{KZ}
Let $(X,d,\mu)$ be a complete metric-measure space that supports a $p$-Poincar\'e inequality for some $p\in(1,\infty)$. Then there exists $q\in[1,p)$ such that $X$ supports a $q$-Poincar\'e inequality.
\end{lemma}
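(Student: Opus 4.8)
The statement to prove is the Keith--Zhong "open-endedness" theorem, and the plan is to follow the strategy of \cite{KeithZhong}, using completeness only through its standard structural consequences. I would begin with the familiar \emph{reductions}. Since $\mu$ is doubling, the $p$-Poincar\'e inequality is equivalent (Haj\l{}asz--Koskela) to its "$(p,p)$" form $\big(\mvint_B|u-u_B|^p\,d\mu\big)^{1/p}\le C\diam(B)\big(\mvint_{\sigma B}g^p\,d\mu\big)^{1/p}$, to the pointwise estimate $|u(x)-u(y)|\le C\,d(x,y)\big((M_\rho g^p)(x)^{1/p}+(M_\rho g^p)(y)^{1/p}\big)$ for $\mu$-a.e.\ $x,y$ (with $\rho\approx d(x,y)$ and $M_\rho$ the maximal operator over balls of radius $\le\rho$), and to its "truncated" form, i.e.\ the same inequality applied to $u^t_s:=\min\{t,\max\{s,u\}\}$ with upper gradient $g\chi_{\{s<u<t\}}$. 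Completeness together with doubling and the $p$-Poincar\'e inequality forces $X$ to be quasiconvex and proper, so after a bi-Lipschitz change of metric (which only alters constants) I may assume $X$ is geodesic. By density of Lipschitz functions (Lemma~\ref{lipdensity}) and standard approximation it then suffices to produce $q\in[1,p)$, a dilation $\sigma'\ge1$, and a constant $C$ with the "$(q,q)$-Poincar\'e inequality" $\big(\mvint_B|u-u_B|^q\,d\mu\big)^{1/q}\le C\diam(B)\big(\mvint_{\sigma' B}g^q\,d\mu\big)^{1/q}$ for all balls $B$, all Lipschitz $u$, and all upper gradients $g$, because H\"older's inequality applied to the left-hand side then yields the $q$-Poincar\'e inequality \eqref{poincare}.

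The heart of the matter is to \emph{lower the exponent}, which I would do by a Calder\'on--Zygmund / good-$\lambda$ argument. Fix $B=B(x_0,r)$, Lipschitz $u$, an upper gradient $g$, and set $a=\big(\mvint_{\sigma' B}g^q\,d\mu\big)^{1/q}$; the claim is $\mvint_B|u-u_B|\,d\mu\le Cra$. For a large constant $K$ form the open "bad set" $E=\{x\in\sigma' B:\ M_{(\sigma' r)}(g^q)(x)>(Ka)^q\}$; the weak-$(1,1)$ inequality gives $\mu(E)\le CK^{-q}\mu(\sigma' B)$, and a Whitney decomposition writes $E=\bigcup_iB_i$ with bounded overlap. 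On the good set $\sigma' B\setminus E$ the pointwise estimate shows $u$ is $CKa$-Lipschitz, so McShane extension produces a globally $CKa$-Lipschitz $v$ agreeing with $u$ off $E$, whence the "good" part of $\mvint_B|u-u_B|\,d\mu$ is $\lesssim Kar$. On each Whitney ball the oscillation of $u$ is controlled by re-applying the $p$-Poincar\'e inequality \emph{in truncated form} (so the gradient term only sees the level set of $u$ over $B_i$), and these contributions are summed against the distribution function of $M_{(\sigma' r)}(g^q)$ along a geometric sequence of levels $K,2K,4K,\dots$. Tracking this carefully should produce an estimate in which the "bad" contribution is bounded by $\eta$ times the analogous quantity on a dilate of $B$ plus $C_\eta\int_{\sigma' B}g^q\,d\mu$, with $\eta$ arbitrarily small; iterating over a chain of dilated balls in the spirit of Gehring's lemma and absorbing the self-referential term then gives the $(q,q)$-Poincar\'e inequality. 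Undoing the geodesic reduction yields Lemma~\ref{KZ}, which together with Theorem~\ref{ptwisethm} gives Theorem~\ref{main2}.

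The genuinely hard step is obtaining the self-referential "bad" estimate with a \emph{small} multiplier $\eta$, uniformly over all balls and all admissible pairs $(u,g)$: a crude Chebyshev bound only yields $\eta\approx1$, which is useless, and extracting the extra smallness is precisely where the rich supply of rectifiable curves in a complete doubling PI space — and the truncation structure of the Poincar\'e inequality — must be used essentially; this is the part of \cite{KeithZhong} that makes the result "deep". Equivalently, through Keith's modulus characterization of the Poincar\'e inequality, the obstacle is to self-improve a lower bound on the $p$-modulus of the family of curves joining two continua in a ball to a lower bound on the $q$-modulus for some $q<p$ — the reverse of what H\"older's inequality gives, hence a statement that genuinely exploits the geometry rather than soft functional analysis. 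Secondary nuisances that also have to be handled are the overlap of Whitney balls under iterated dilations, the scale cutoff in the restricted maximal operator, and the stability of the relevant constants under the bi-Lipschitz passage to a geodesic metric.
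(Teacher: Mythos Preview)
The paper does not prove this lemma at all: in keeping with its stated convention that ``Lemma'' is reserved for well-known results, it simply quotes Keith--Zhong's theorem and cites \cite{KeithZhong} (together with \cite{Er} and \cite[Theorem~12.3.9]{HKST}) for the proof. So there is no argument in the paper to compare your proposal against.

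That said, your outline is a faithful high-level sketch of the original Keith--Zhong strategy: reduce to a geodesic space via quasiconvexity, pass to Lipschitz test functions, and run a Calder\'on--Zygmund/good-$\lambda$ iteration on the restricted maximal function of $g^q$, using truncations of $u$ to localize the gradient. You also correctly isolate the genuine difficulty---obtaining the self-referential bound on the ``bad'' part with a multiplier strictly less than $1$---and rightly flag that a naive Chebyshev estimate does not suffice. Since the paper explicitly calls this a ``deep result'' and makes no attempt to reprove it, your decision to sketch rather than fully execute the argument is consistent with the paper's treatment; a complete proof would require substantially more than what is appropriate here.
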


In the proof of Theorem~\ref{ptwisethm} we will make use of the \emph{Hardy-Littlewood maximal operator} of a function $g\in L^1_{{\rm loc}}(X)$ which is defined by
$$
\big(\mathscr{M}g\big)(x):=\sup_{r>0}\mvint_{B(x,r)}|g|\,d\mu\quad\mbox{for all $x\in X$.}
$$
We will use the boundedness of the maximal function in $L^p(X)$, \cite[Theorem~3.5.6]{HKST}:
\begin{lemma}
\label{T4}
If $\mu$ is a doubling measure on a metric space $X$ and $p\in (1,\infty]$, then
there is a constant $C$ depending on $p$ and the doubling constant of the measure only, such that
$\Vert \mathscr{M}g\Vert_{L^p(X)}\leq C\Vert g\Vert_{L^p(X)}$ for all $g\in L^p(X)$.
\end{lemma}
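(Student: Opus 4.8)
The plan is to derive the weak-type $(1,1)$ estimate for $\mathscr{M}$ by a covering argument, note the trivial $L^\infty$ bound, and conclude by interpolation. The endpoint $p=\infty$ is immediate: for every ball $B(x,r)$ one has $\mvint_{B(x,r)}|g|\,d\mu\le\|g\|_{L^\infty(X)}$, so $\|\mathscr{M}g\|_{L^\infty(X)}\le\|g\|_{L^\infty(X)}$.

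For the weak-type bound, fix $g\in L^1(X)$ and $\lambda>0$ and set $E_\lambda:=\{x\in X:\mathscr{M}g(x)>\lambda\}$, which is open (hence $\mu$-measurable) since $\mathscr{M}g$ is lower semicontinuous. Because $E_\lambda$ is the increasing union of the sets $E_\lambda\cap B(x_o,R)$ as $R\to\infty$ for a fixed $x_o\in X$, it suffices to bound $\mu(E_\lambda\cap B(x_o,R))$. For each $x\in E_\lambda\cap B(x_o,R)$ pick a ball $B_x=B(x,r_x)$ with $\int_{B_x}|g|\,d\mu>\lambda\mu(B_x)$; in particular $\mu(B_x)<\lambda^{-1}\|g\|_{L^1(X)}$. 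If some chosen ball has $r_x\ge 2R$, then $B(x_o,R)\subseteq B_x$ and $\mu(E_\lambda\cap B(x_o,R))\le\mu(B_x)<\lambda^{-1}\|g\|_{L^1(X)}$; otherwise all the $B_x$ have radii $<2R$, hence uniformly bounded, and the $5r$-covering lemma yields a countable pairwise disjoint subfamily $\{B_{x_i}\}_i$ with $E_\lambda\cap B(x_o,R)\subseteq\bigcup_i 5B_{x_i}$. Using $5B\subseteq 8B$ and $\mu(8B)\le C_d^3\mu(B)$, together with the disjointness of the $B_{x_i}$, we get
\[
\mu\big(E_\lambda\cap B(x_o,R)\big)\le\sum_i\mu(5B_{x_i})\le C_d^3\sum_i\mu(B_{x_i})\le\frac{C_d^3}{\lambda}\sum_i\int_{B_{x_i}}|g|\,d\mu\le\frac{C_d^3}{\lambda}\|g\|_{L^1(X)}.
\]
Letting $R\to\infty$ gives $\mu(E_\lambda)\le C_d^3\lambda^{-1}\|g\|_{L^1(X)}$ for all $\lambda>0$.

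Finally, $\mathscr{M}$ is sublinear (indeed $\mathscr{M}(g+h)\le\mathscr{M}g+\mathscr{M}h$), of weak type $(1,1)$ with constant $C_d^3$, and of strong type $(\infty,\infty)$ with constant $1$. The Marcinkiewicz interpolation theorem then produces, for each $p\in(1,\infty)$, a constant $C=C(p,C_d)$ with $\|\mathscr{M}g\|_{L^p(X)}\le C\|g\|_{L^p(X)}$ for all $g\in L^p(X)$; alternatively one may argue by hand, splitting $g=g\chi_{\{|g|>\lambda/2\}}+g\chi_{\{|g|\le\lambda/2\}}$, applying the weak-type bound to the first piece and the $L^\infty$ bound to the second, and integrating the resulting distribution-function estimate against $p\lambda^{p-1}\,d\lambda$. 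Together with the trivial $p=\infty$ case this is precisely the claim.

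The \emph{main obstacle} is purely technical: ensuring the $5r$-covering lemma is applicable, which requires the chosen balls to have uniformly bounded radii; this is handled by the localization to $B(x_o,R)$ above, with the single large-radius ball treated separately. Everything else follows directly from the doubling condition and standard real-variable arguments.
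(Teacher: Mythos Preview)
The paper does not prove this lemma; it is stated with a citation to \cite[Theorem~3.5.6]{HKST} and used as a black box. Your argument---the weak-type $(1,1)$ estimate via the $5r$-covering lemma and doubling, the trivial $L^\infty$ bound, and Marcinkiewicz interpolation---is exactly the standard proof found in that reference and is correct.
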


\begin{proof}[Proof of Theorem~\ref{ptwisethm}]
Assume that the $q$-Poincar\'e inequality holds with constants $c_{\PI}'$ and $\lambda'$.
Since all norms in $\bbbr^N$ are equivalent, it suffices to prove
that there exists a constant  $C\geq 1$ such that for all $u\in N^{1,p}(X)$,
\begin{equation}
\label{eq7}
C^{-1}g_u(x)\leq\limsup_{k\to\infty} |T_{k} u(x)|_p\leq Cg_u(x)\quad
\mbox{for $\mu$-a.e. $x\in X$.}
\end{equation}
Fix $u\in N^{1,p}(X)$. The second inequality in \eqref{eq7} follows from \eqref{tkpwest} and the Lebesgue differentiation theorem (Lemma~\ref{T5}) whenever $x$ is a Lebesgue point of $g_u^p$. Indeed, it is immediate from H\"older's inequality that $X$ supports a $p$-Poincar\'e inequality and so, Lemma~\ref{pwkpoin} implies the pair $(u,g_u)$ satisfies the $p$-Poincar\'e inequality \eqref{poincare}.

There remains to prove the first inequality in \eqref{eq7}. Our plan in this regard is to apply Lemma~\ref{pro:T_ku-converges-balls} with $h_k:=4\sup_{j\geq k}|T_ju|_p$ and  $h:=4\limsup_{k\to\infty} |T_{k} u|_p$ in order to conclude that $h$ is a $p$-weak upper gradient for $u$. To this end, first observe that clearly each $h_k$ and $h$ are nonnegative Borel functions. Moreover, Lemma~\ref{almostug} implies that if $d(x,y)\geq 2^{-k}$ for some $x,y\in X$ and $k\in\mathbb{N}$, and $\gamma$ is a rectifiable curve connecting $x$ and $y$, then
$$
|S_ku(x)-S_ku(y)|\leq \int_\gamma 4|T_ku|_p\,ds
\leq \int_\gamma h_k\,ds,
$$
where  $S_ku$ is as in \eqref{ukdef}. Hence, \eqref{bwr-249} in Lemma~\ref{pro:T_ku-converges-balls}  holds. 

Next, we claim that $\{h_k\}_{k\in\mathbb{N}}$ converges to $h$ in $L^p(X)$. 

Since $g_u$ is a $p$-weak upper gradient of $u$, it is also a $q$-weak upper gradient by Lemma~\ref{except}. Since $g_u$ is finite $\mu$-a.e., \eqref{tkpwest} in Lemma~\ref{TkBDD} (used here with $q$ in place of $p$) yields
\begin{equation}
\label{ejr-27}
|T_ku|_p\lesssim|T_ku|_q\lesssim
\Bigg(\,\,\mvint_{5\lambda' B[x]}g_u^q\,d\mu\Bigg)^{1/q}
\lesssim\big(\mathscr{M}g_u^q\big)^{1/q}\quad\mbox{pointwise on $X$,}
\end{equation}
where the implicit constant is independent of $k$. Note that in \eqref{ejr-27}, the first inequality is a consequence of the fact that all norms on $\bbbr^N$ are equivalent, and the last inequality follows from doubling condition and the definition of $\mathscr{M}$.  Therefore, we have that $h_k\lesssim\big(\mathscr{M}g_u^q\big)^{1/q}$ pointwise on $X$ for every $k\in\mathbb{N}$. On the other hand, since $g_u^q\in L^{p/q}(X)$ and $p/q>1$, the boundedness of $\mathscr{M}$ on $L^{p/q}(X)$ (Lemma~\ref{T4}) implies that $h_k\lesssim(\mathscr{M}g_u^q)^{1/q}\in L^p(X)$. Clearly, $\{h_k\}_{k\in\mathbb{N}}$ converges pointwise to $h$ and so, by Lebesgue's dominated convergence theorem, we have that $h_k\to h$ in $L^p(X)$. In particular,  $\{h_k\}_{k\in\mathbb{N}}$ converges weakly to $h$ in $L^p(X)$. Therefore, $\{h_k\}_{k\in\mathbb{N}}$ and $h$ satisfy the hypotheses of Lemma~\ref{pro:T_ku-converges-balls}, and it follows that $h$ is a $p$-weak upper gradient for ${u}$ which, in turn, implies that $h \ge g_u$ pointwise $\mu$-a.e.\@ in $X$ by the definition of a minimal $p$-weak upper gradient. This completes the proof of the first inequality in \eqref{eq7} and, in turn, the proof of Theorem~\ref{ptwisethm}.
\end{proof}


\begin{thebibliography}{00}
\frenchspacing
\setlength{\parskip}{0pt}
\setlength{\itemsep}{1 pt plus 0.5 pt minus 0.5 pt}

\bibitem{Ambrosio} 
Ambrosio, L., Colombo, M., Di Marino, S.:
Sobolev spaces in metric measure spaces: reflexivity and lower semicontinuity of slope, in \emph{Variational methods for evolving objects}, pp.\ 1--58, Adv.\ Stud.\ Pure Math., \textbf{67}, Math. Soc. Japan, 2015.

\bibitem{BjoBjo} Bj\"orn, A., Bj\"orn, J.:
\emph{Nonlinear Potential Theory on Metric Spaces},
EMS Tracts in Mathematics \textbf{17}, European Mathematical Society, Z\"urich, 2011.


\bibitem{Ch}
Cheeger, J.:
Differentiability of Lipschitz functions on metric measure spaces, {Geom.\ Funct.\ Anal.} \textbf{9} (1999), 428--517.

\bibitem{Clark}
Clarkson, J.A.:
Uniformly convex spaces,
{Trans. Amer. Math. Soc.} 
\textbf{40} (1936), no. 3, 396–414.

\bibitem{DurSha}
Durand-Cartagena, E., Shanmugalingam, N.: An elementary proof of Cheeger's theorem on reflexivity of Newton-Sobolev spaces of functions in metric measure spaces, {J. Anal.} \textbf{21} (2013), 73--83.

\bibitem{Er}
Eriksson-Bique, S.: Alternative proof of Keith-Zhong self-improvement and connectivity. 
{\em Ann.\ Acad.\ Sci.\ Fenn.\ Math.} \textbf{44} (2019), 407--425.

\bibitem{ES}
Eriksson-Bique, S., Soultanis, E.:
Curvewise characterizations of minimal upper gradients and the construction of a Sobolev differential. {Analysis and PDE} (to appear).

\bibitem{FL07} Fonseca, I., Leoni, G.: \textit{Modern methods in the calculus of variations: $L^p$ spaces,} Springer Monographs in Mathematics. Springer, New York, 2007.

\bibitem{FraHajKos}
  Franchi, B., Haj\l{}asz, P., Koskela, P.: Definitions of Sobolev classes on metric spaces, {Ann.\ Inst.\ Fourier} \textbf{49} (1999), 1903--1924.

\bibitem{Haj}
  Haj\l{}asz, P.: Sobolev spaces on metric-measure spaces, in \emph{Heat Kernels and Analysis on Manifolds, Graphs, and Metric Spaces (Paris, 2002)}, Contemp. Math. \textbf{338}, pp. 173--218, Amer. Math. Soc., Providence, RI, 2003.

\bibitem{Haj2}
Haj\l{}asz, P.:
Sobolev spaces on an arbitrary metric space, 
{Potential Anal.} \textbf{5} (1996), 403--415.


\bibitem{Hei07}
Heinonen, J.:
Nonsmooth calculus, 
{Bull.\ Amer.\ Math.\ Soc.\ (N.S.)} \textbf{44} (2007), 163--232.

\bibitem{HK}
Heinonen, J., Koskela, P.:
 Quasiconformal maps in metric spaces with controlled geometry, {Acta Math.} \textbf{181} (1998), 1--61.

\bibitem{HKST}
  Heinonen, J., Koskela, P., Shanmugalingam, N., Tyson, J.:
  \emph{Sobolev Spaces on Metric Measure Spaces: an Approach Based on Upper Gradients}, New Mathematical Monographs \textbf{27}, Cambridge University Press, 2015.

\bibitem{Keith}
 Keith, S.: A differentiable structure for metric measure spaces, {Adv.\ Math.} \textbf{183} (2004), 271--315. 

\bibitem{KeithZhong} Keith S. and Zhong X.: The Poincar\'e inequality is an open ended condition, {Ann. of Math.} \textbf{167} (2008), 575--599.

\bibitem{Sha}
Shanmugalingam, N.: Newtonian spaces: An extension of Sobolev spaces to metric measure spaces, {Rev. Mat. Iberoamericana} \textbf{16} (2000), 243--279.


\end{thebibliography}
\end{document}